\numberwithin{equation}{section}
\newcolumntype{C}{>{$}c<{$}} %Defines math mode in tabular
\theoremstyle:=definition,remark,plain\do{%
    \expandafter\g@addto@macro\csname th@\theoremstyle\endcsname{%
      \addtolength\thm@preskip{.5\baselineskip plus .2\baselineskip minus .2\baselineskip}
      \addtolength\thm@postskip{.5\baselineskip plus .2\baselineskip minus .2\baselineskip}
    }%
  }
\newcommand{\pd}{\partial}     % holomorphic partial d
\renewcommand{\ge}{\geqslant} % never use \geq or \geqslant, use \ge which is globally defined to be one or the other
\renewcommand{\le}{\leqslant} % same for \leq and \leqslant (and we should use \ne instead of \neq for consistency I guess)
\renewcommand{\cong}{\simeq} % I like this symbol better...
\DeclareMathOperator{\id}{id}
\newcommand{\cc}{\mathsf{c}}   % central charge
\newcommand{\ee}{\mathsf{e}}   % ln e = 1
\newcommand{\kk}{\mathsf{k}}   % level
\newcommand{\uu}{\mathsf{u}}   % minmod parameter
\providecommand{\vv}{\mathsf{v}}\renewcommand{\vv}{\mathsf{v}}   % minmod parameter [DR both needed as sometimes \vv is predefined]
\newcommand{\fracuv}{\frac{\uu}{\vv}} % so lazy!
\newcommand{\wun}{\vvmathbb{1}}  % here used to indicate characteristic functions (requires newtxmath)
\DeclarePairedDelimiter{\brac}{\lparen}{\rparen}   % use \brac for (...) and \brac* to automatically scale the ( and )
\DeclarePairedDelimiter{\sqbrac}{\lbrack}{\rbrack} % use \sqbrac[\big] for \bigl(...\bigr) etc...
\DeclarePairedDelimiter{\set}{\lbrace}{\rbrace}
\newcommand{\st}{\mspace{5mu} {:} \mspace{5mu}}    % "such that" in sets
\newcommand{\no}[1]{\mathopen{:} #1 \mathclose{:}} % normal ordering (prevent := or =:)
\DeclarePairedDelimiterX{\comm}[2]{\lbrack}{\rbrack}{#1 , #2}  % commutators
\DeclarePairedDelimiterX{\acomm}[2]{\lbrace}{\rbrace}{#1 , #2} % anticommutators
\DeclarePairedDelimiterX{\inner}[2]{\langle}{\rangle}{#1 , #2} % scalar products
\DeclarePairedDelimiterX{\super}[2]{\lparen}{\rparen}{#1 \delimsize\vert \mathopen{} #2} % for super args (m|n)
\newcommand{\fld}[1]{\mathbb{#1}}    % for fields and related things
\newcommand{\alg}[1]{\mathfrak{#1}}  % for Lie algebras
\newcommand{\VOA}[1]{\mathsf{#1}}    % VOAs
\newcommand{\categ}[1]{\mathscr{#1}} % categories
\newcommand{\ZZ}{\fld{Z}}
\newcommand{\RR}{\fld{R}}
\newcommand{\CC}{\fld{C}}
\newcommand{\affine}[1]{\widehat{#1}}
\newcommand{\SLA}[2]{\alg{#1}_{#2}}                      % Lie algebras like sl(2)
\newcommand{\sltwo}{\SLA{sl}{2}}
\newcommand{\slthree}{\SLA{sl}{3}}
\newcommand{\slnpone}{\SLA{sl}{n+1}}
\newcommand{\fsub}{f_\textup{sub}}                                    % subregular nilpotent element
\newcommand{\freg}{f_\textup{reg}}                                    % regular nilpotent element
\newcommand{\bpsymb}{\VOA{BP}}
\newcommand{\Wsymb}{\VOA{W}}
\newcommand{\Wthree}{\Wsymb_3}
\newcommand{\uaffvoa}[2]{\VOA{V}^{#1}\brac{#2}}                   % universal affine VOA #1 = level, #2 = g
\newcommand{\saffvoa}[2]{\VOA{L}_{#1}\brac{#2}}                   % simple affine VOA #1 = level, #2 = g
\newcommand{\uslvoa}[1]{\uaffvoa{#1}{\slthree}}                   % universal sl_3 VOA
\newcommand{\ubpvoa}[1]{\bpsymb^{#1}}                             % universal BP VOA
\newcommand{\sbpvoa}[1]{\bpsymb_{#1}}                             % simple BP VOA
\newcommand{\uslvoak}{\uslvoa{\kk}}                               % universal sl_3 VOA
\newcommand{\ubpvoak}{\ubpvoa{\kk}}                               % universal BP VOA
\newcommand{\sbpvoak}{\sbpvoa{\kk}}                               % simple BP VOA
\newcommand{\uWvoa}[1]{\Wthree^{#1}}                       		  % universal W3 VOA
\newcommand{\uWvoak}{\uWvoa{\kk}}                                 % universal W3 VOA level k
\newcommand{\halflattice}{\Pi}                          		  % half-lattice VOA
\newcommand{\uReg}[2]{\Wsymb{}^{#1}_{#2}}                       	  % universal Wn VOA
\newcommand{\sReg}[2]{\Wsymb_{#2, \ #1}}                   	      % simple Wn VOA
\newcommand{\Regminmod}[2]{\Wsymb_{n+1} \brac{#1,#2}}             % simple fractional-level Wn VOA
\newcommand{\uRegkk}{\uReg{\kk}{n+1}}                             % universal Wn VOA level k
\newcommand{\sRegkk}{\sReg{\kk}{n+1}}                             % simple Wn VOA level k
\newcommand{\Regminmoduv}{\Regminmod{\uu}{\vv}}             	  % simple fractional-level Wn VOA u and v
\newcommand{\uSub}[2]{\overline{\Wsymb}{}^{#1}_{#2}}                 % universal subregular VOA
\newcommand{\uSubkk}{\uSub{\kk}{n+1}}                             % universal subregular VOA level k
\newcommand{\sSub}[2]{\overline{\Wsymb}_{#2, \ #1}}                % simple subregular VOA
\newcommand{\sSubkk}{\sSub{\kk}{n+1}}                             % simple subregular VOA level k
\newcommand{\Subminmod}[2]{\overline{\Wsymb}_{n+1} \brac{#1,#2}}   % simple subregular VOA adm level
\newcommand{\Subminmoduv}{\overline{\Wsymb}_{n+1} \brac{\uu,\vv}}  % simple subregular VOA level u and v
\newcommand{\Subcc}[1]{\overline{\cc}^{n+1}_{#1}}                 % subreg central charge
\newcommand{\Regcc}[1]{\cc^{n+1}_{#1}}                            % reg central charge
\newcommand{\lcc}[1]{\cc^{\halflattice}_{#1}}                     % Pi central charge
\newcommand{\Subcck}{\Subcc{\kk}}                                 % subreg central charge level k
\newcommand{\Regcck}{\Regcc{\kk}}                                 % reg central charge level k
\newcommand{\lcck}{\lcc{\kk}}                                     % Pi central charge level k
\newcommand{\lccuv}{\lcc{\uu,\vv}}                                % Pi central charge u and v
\newcommand{\picorr}[2]{\pi^{#1,#2}}                             % additional Pi fields for realisations
\newcommand{\uQHR}[3]{\Wsymb^{#1}\brac{#2,#3}}                    % universal QHR VOA
\newcommand{\sQHR}[3]{\Wsymb_{#1}\brac{#2,#3}}                    % simple QHR VOA
\newcommand{\FS}[1]{\mathcal{W}^{(2)}_{#1}}                       % Feigin-Semikhatov VA
\newcommand{\zhu}[1]{\mathsf{Zhu}\sqbrac*{#1}}                	  % Zhu's algebra/functor
\newcommand{\sfsymb}{\sigma}                             		  % spectral flow
\newcommand{\sfmod}[2]{\sfsymb^{#1}\bigl(#2\bigr)}       		  % spectrally flowed module
\newcommand{\halflatticemod}[2]{\Pi_{#1}\brac[\big]{#2}}   %relaxed half-lattice mod
\newcommand{\tpmod}[2]{#1 (#2)}   %tensorprod mod
\DeclareMathOperator{\chmap}{ch}              % the "ch" in characters
\newcommand{\Gr}[1]{\sqbrac[\big]{#1}}        % element of a Grothendieck group/ring
\newcommand{\charac}[3]{\chmap \Gr{#1} \left( #2 ; #3 \right)}
\newcommand{\qcharac}[2]{\chmap \Gr{#1} \left( #2 \right)}
\newcommand{\vo}{vertex operator}
\newcommand{\voa}{\vo\ algebra}
\newcommand{\voas}{\voa s}
\newcommand{\ope}{operator product expansion}
\newcommand{\opes}{\ope s}
\newcommand{\qhr}{quantum hamiltonian reduction} %D yes the _h_ is lowercase
\theoremstyle{plain}
\newtheorem{theorem}{Theorem}[section]
\newtheorem{corollary}[theorem]{Corollary}
\newtheorem{lemma}[theorem]{Lemma}
\newtheorem{proposition}[theorem]{Proposition}
\newtheorem{mthm}{Main Result}
\newtheorem*{example}{Example}
\newtheorem{definition}[theorem]{Definition}
\Crefname{assumption}{Assumption}{Assumptions}
\DeclareRobustCommand{\SkipTocEntry}[5]{} % ams workaround for silly toc ideals
\begin{document}

\title[Subregular W-algebras of type $A$]{Subregular W-algebras of type $A$}

\author[Z~Fehily]{Zachary Fehily}
\address[Zachary Fehily]{
School of Mathematics and Statistics \\
University of Melbourne \\
Parkville, Australia, 3010.
}
\email{zfehily@student.unimelb.edu.au}

\begin{abstract}
Subregular W-algebras are an interesting and increasingly important class of quantum hamiltonian reductions of affine vertex algebras. Here, we show that the $\mathfrak{sl}_{n+1}$ subregular W-algebra can be realised in terms of the  $\mathfrak{sl}_{n+1}$ regular W-algebra and the half lattice vertex algebra $\Pi$. This generalises the realisations found for $n=1$ and $2$ in \cite{AdaRea17,AdaRea20} and can be interpreted as an inverse quantum hamiltonian reduction in the sense of Adamovi\'c. We use this realisation to explore the representation theory of $\mathfrak{sl}_{n+1}$ subregular W-algebras. Much of the structure encountered for $\mathfrak{sl}_{2}$ and $\mathfrak{sl}_{3}$ is also present for $\mathfrak{sl}_{n+1}$. Particularly, the simple $\mathfrak{sl}_{n+1}$ subregular W-algebra at nondegenerate admissible levels can be realised purely in terms of the $\mathsf{W}_{n+1}$ minimal model vertex algebra and $\Pi$.
\end{abstract}

\maketitle

\onehalfspacing

\section{Introduction}

\subsection{Background}
Given a simple Lie superalgebra $\mathfrak{g}$, a complex number $\kk$ and a nilpotent element $f \in \mathfrak{g}$, there exists a cochain complex of vertex operator algebras whose zeroth cohomology has the structure of a vertex operator algebra $\uQHR{\kk}{\mathfrak{g}}{f}$ called a \emph{W-algebra} \cite{KacQua03, KacQua04}. This process is called \emph{quantum hamiltonian reduction} of the universal affine vertex algebra $\uaffvoa{\kk}{\mathfrak{g}}$. By choosing the nilpotent element $f$ carefully, one obtains vertex operator algebras that are particularly easy to work with. Historically, a popular choice of nilpotent element has been the \emph{regular} (or \emph{principal}) nilpotent element $\freg \in \mathfrak{g}$. The associated W-algebra $\uQHR{\kk}{\mathfrak{g}}{\freg}$ has been studied extensively and has been at the forefront of many developments in mathematics and physics. See \cite{Beem15a,Fre18,Ara19q,AGT10,Yam88} for example.

Recent work in physics and mathematics has indicated the importance of the W-algebra $\uQHR{\kk}{\mathfrak{g}}{\fsub}$ associated to \emph{subregular} nilpotent elements $\fsub \in \mathfrak{g}$. For example, subregular W-algebras appear in the Schur-index of $4D$ superconformal field theories known as Argyres--Douglas theories \cite{Arg95,Beem15,Cre17w}. Subregular nilpotent elements and their nilpotent orbits also play a crucial role in singularity theory: the ADE classification of simple surface singularities connects to the ADE classification of simply-laced Lie algebras through the geometry of the Slodowy slice corresponding to the subregular nilpotent orbit \cite{Slod80}. This connection is expressed beautifully in the associated variety of the subregular W-algebra \cite{AraAss15}. In light of these and many more motivations, much recent work has been done to improve our understanding of the structure and representation theory of subregular W-algebras \cite{AdaVer95,Feh21,CreLin17,Fasquel2020,GenStrong20,CreGen21,CreGenShig21}. 

In general, an understanding of W-algebras for nonregular nilpotents is highly desirable. One means of achieving this is to leverage the well-understood W-algebras to learn about others: It is strongly suspected that in addition to the usual quantum hamiltonian reduction, one can also perform a `partial quantum hamiltonian reduction' between two W-algebras as long as their corresponding nilpotent orbits are related by a certain partial ordering \cite{Collingwood17}. Such partial reductions are in the same spirit as the `secondary quantum hamiltonian reductions' introduced by Madsen and Ragoucy \cite{Ra96}. In fact, a partial reduction for finite W-algebras has been defined for certain cases \cite{MorQua15}. This is a good sign as finite W-algebras arise as the Zhu algebras of W-algebras \cite{DeSFin06}.

The philosophy behind this paper is that despite not having a complete understanding of how partial quantum hamiltonian reduction should be defined for W-algebras, it can be inverted. The first inverse quantum hamiltonian reduction was described by Semikhatov for the level-$\kk$ universal affine vertex algebra $\uaffvoa{\kk}{\sltwo}$ from a string-theoretic point of view \cite{SemInv94}. Later work by Adamovi\'c and collaborators showed how such inverse reductions can be used in the representation theory of W-algebras  \cite{AdaRea17,AdaRea20,FehilyMod,ACG21}. 

Owing to Figure \ref{fig:ordering}, the `simplest' class of W-algebras for which we expect there to be an inverse quantum hamiltonian reduction are the universal regular W-algebras $\uQHR{\kk}{\slnpone}{\freg}$ and the universal subregular W-algebras $\uQHR{\kk}{\slnpone}{\fsub}$. Concretely, what we expect is the existence of an embedding of the subregular W-algebra into the regular W-algebra tensored with some lattice vertex algebra. The lattice part represents the degrees of freedom lost during partial quantum hamiltonian reduction. Here, we prove that such an embedding exists for non-critical $\kk$ and describe it explicitly.

\begin{figure} 
	\begin{tikzpicture}[xscale=2.5,yscale=1.5]
		\draw[->] 
		(-2.9,1.9)  node[above] {$\uQHR{\kk}{\sltwo}{\fsub} = \uQHR{\kk}{\sltwo}{0} \cong \uaffvoa{\kk}{\sltwo}$} -- 
		(-2.9,1.1) node[below] {$\uQHR{\kk}{\sltwo}{\freg} \cong \VOA{Vir}^\kk$};
		
		\draw[->,blue] (-2.8,1.1) .. controls (-2.5,1.5) .. (-2.8,1.9) ;
		\node at (-2.39,1.5) {\cite{SemInv94,AdaRea17}};
		
		\draw[->,dashed] 
		(-0.5,0.68)  node[above] {$\uQHR{\kk}{\slthree}{f_\textup{min}} = \uQHR{\kk}{\slthree}{\fsub} \cong \ubpvoak$} -- 
		(-0.5,-0.12) node[below] {$\uQHR{\kk}{\slthree}{\freg} \cong \uWvoak$};
		
		\draw[->,blue] (-0.4,-0.12) .. controls (-0.1,0.28) .. (-0.4,0.68) ;
		\node at (-0.05,0.28) {\cite{AdaRea20}};
		
		\draw[->] 
		(-0.5,1.9)  node[above] {$\uQHR{\kk}{\slthree}{0} \cong \uaffvoa{\kk}{\slthree}$} -- 
		(-0.5,1.1);
		
		\draw[->,blue] (-0.4,1.1) .. controls (-0.1,1.5) .. (-0.4,1.9) ;
		\node at (-0.05,1.5) {\cite{ACG21}};
		
		\draw[->] (-0.6,1.9) .. controls (-2,0.89) .. (-0.6,-0.12);
		
		\draw[->] 
		(1.9,1.9)  node[above] {$\uQHR{\kk}{\slnpone}{0} \cong \uaffvoa{\kk}{\slnpone}$} --
		(1.9,1.1) node[below] {$\uQHR{\kk}{\slnpone}{f_\textup{min}}$};
		
		\draw[->,dashed] (1.95,0.68) -- (2.3,0);
		\draw[->,dashed] (1.85,0.68) -- (1.5,0);
		
		\node at (1.9,-0.2) {\vdots};
		\node at (2.3,-0.2) {\vdots};
		\node at (1.5,-0.2) {\vdots};
		
		\draw[->,dashed] (2.3,-0.6) -- (1.95,-1.28);
		\draw[->,dashed] (1.5,-0.6) -- (1.85,-1.28);
		
		\draw[->,dashed] 
		(1.9,-1.7)  node[above] {$\uQHR{\kk}{\slnpone}{\fsub}$} -- 
		(1.9,-2.7) node[below] {$\uQHR{\kk}{\slnpone}{\freg}$};
		
		\draw[->] (1.8,1.9) .. controls (0.85,0.41) .. (1.7,-1.25);
		\draw[->] (1.7,1.9) .. controls (0.4,-0.15) .. (1.8,-2.7);
		
		\draw[->,blue] (2,-2.7) .. controls (2.4,-2.2) .. (2,-1.7) ;
		
		\draw[->] (-3,-1) -- (-3,-1.4);
		\node at (-1.98,-1.2) {\small{\ = \ Quantum hamiltonian reduction}};
		\draw[->,dashed] (-3,-1.6) -- (-3,-2);
		\node at (-1.8,-1.8) {\small{\ = \ Partial quantum hamiltonian reduction}};
		\draw[->, blue] (-3,-2.6) -- (-3,-2.2);
		\node at (-1.78,-2.4) {\small{\ = \ Inverse quantum hamiltonian reduction}};
	\end{tikzpicture}
\caption{The partial ordering of W-algebras for $\sltwo$, $\slthree$ and $\slnpone$. The inverse quantum hamiltonian reduction we construct in this paper is represented by the unlabelled blue arrow.}
\label{fig:ordering}
\end{figure}

\noindent  Another notable feature of the regular-subregular example is that the simple regular W-algebra  $\sQHR{\kk}{\mathfrak{g}}{\freg}$ is rational \cite{AraRat15} for almost all admissible levels. At these levels, one goal is to construct relaxed $\sQHR{\kk}{\mathfrak{g}}{\fsub}$-modules out of the (finitely many)  $\sQHR{\kk}{\mathfrak{g}}{\freg}$-modules and eventually use these to construct `logarithmic minimal models' with admissible-level $\sQHR{\kk}{\mathfrak{g}}{\fsub}$ symmetry. This has been done for $\mathfrak{g} =$  $\sltwo$ \cite{CreMod13} and $\slthree$ \cite{FehilyMod}. In both cases, inverse quantum hamiltonian reduction played a crucial role.

The first step in this logarithmic conformal field theoretic direction is to construct and analyse such relaxed $\uQHR{\kk}{\slnpone}{\fsub}$-modules. Here, we consider these relaxed $\uQHR{\kk}{\slnpone}{\fsub}$-modules and use them to derive a number of results analogous to those found for $\sltwo$ and $\slthree$. 

Overall, the results of this paper further strengthen the suspicion that inverse quantum hamiltonian reductions are fundamental tools in the study of W-algebras, their representation theory and the associated conformal field theories.

\subsection{Results}
Our strategy for finding an inverse quantum hamiltonian reduction from $\uQHR{\kk}{\slnpone}{\freg}$ to $\uQHR{\kk}{\slnpone}{\fsub}$ is using free-field realisations and their corresponding screening operators. The screening operators of $\uQHR{\kk}{\slnpone}{\fsub}$ \cite{Gen17} are superficially similar to those of $\uQHR{\kk}{\slnpone}{\freg}$ \cite{FatLuk88} except for the presence of an additional factor related to a $\beta \gamma$ ghost system $\VOA{B}$. Dealing with this factor can be done using \emph{Friedan-Martinec-Shenker bosonisation} \cite{FMS86}. This is an embedding of $\VOA{B}$ into the half lattice vertex algebra $\Pi$ which has strong generating fields denoted by $c(z)$, $d(z)$ and $\ee^{m c}(z)$ with $m \in \ZZ$. It is convenient to work in terms of $a(z), b(z) \in \Pi$ defined in \eqref{eq:abbasis} rather than $c(z)$ and $d(z)$. 

\begin{mthm}[Theorem \ref{thm:invemb}]
Let $\kk$ be generic. There exists an embedding $\uQHR{\kk}{\slnpone}{\fsub} \hookrightarrow \uQHR{\kk}{\slnpone}{\freg} \otimes \Pi$ with
\begin{equation}
    \uQHR{\kk}{\slnpone}{\fsub} \cong \text{ \emph{ker}} \int  \ee^{a - \omega_1}(z)  \ dz,
\end{equation}
where $\ee^{a}(z)$ and $\ee^{-\omega_1}(z)$ are screening fields for $\Pi$ and $\uQHR{\kk}{\slnpone}{\freg}$ respectively.
\end{mthm}

\noindent That such an embedding exists for a general class of W-algebras is strong evidence that other inverse quantum hamiltonian reductions both exist and can be identified using free-field realisations \cite{Gen17}. 

The embedding of Theorem \ref{thm:invemb} is related to a free-field realisation of $\uQHR{\kk}{\slnpone}{\fsub}$, where explicit formulae for strong generating fields of $\uQHR{\kk}{\slnpone}{\fsub}$ have been obtained \cite{GenStrong20}. In order to leverage such formulae, it is necessary to use certain identifications of the vertex algebras involved that make the $\uQHR{\kk}{\slnpone}{\freg}$ factor manifest. 

Given these identifications, we can describe strong generators for $\uQHR{\kk}{\slnpone}{\fsub}$ in terms of fields in $\uQHR{\kk}{\slnpone}{\freg}$ and $\Pi$: Let $\{W_2(z), W_3(z), \dots, W_{n+1}(z) \}$ be the fields of the Miura transform of $\uQHR{\kk}{\slnpone}{\freg}$ defined in \eqref{eq:freefieldforregfields} and $T(z) = \frac{1}{\kk+n+1} W_2(z)$. Equip $\Pi$ with the conformal structure defined by the field $t(z) = \frac{1}{2} \no{c(z)d(z)} - \frac{n+1}{2} \pd a(z) + \frac{n-1}{2} \pd b(z)$. We will identify $\uQHR{\kk}{\slnpone}{\fsub}$ with the image of the embedding of Theorem \ref{thm:invemb}. Denote the the vacuum vectors of $\uQHR{\kk}{\slnpone}{\freg}$ and $\Pi$ by $\wun_R$ and $\wun_\Pi$ respectively.

\begin{mthm}[Proposition \ref{prop:stronggensubreg}, Theorem \ref{thm:regcontent}]
Let $\kk \neq -n-1$. The fields corresponding to the following $\uQHR{\kk}{\slnpone}{\freg} \otimes \Pi$ states are fields in $\uQHR{\kk}{\slnpone}{\fsub}$:
\begin{equation} 
\begin{gathered}
    L = T\otimes \wun_\Pi + \wun_R \otimes t, \quad J = \wun_R \otimes b, \quad G^+ = \wun_R \otimes \ee^{c}, \quad U_i = (-1)^{i+1} W_i \otimes \wun_\Pi + \sum_{j=0}^{i-1} W_j \otimes \picorr{i}{j}, \\
    G^- = W_{n+1} \otimes \ee^{-c} +   \sum_{j=0}^{n} W_j \otimes \left( \picorr{-}{j}_{(-1)} \ee^{-c} \right),
\end{gathered}
\end{equation}
where $\picorr{i}{j}(z) \in \Pi$ can be explicitly described in principle and
\begin{equation}
    \picorr{-}{j}(z) = \big( (\kk + n) t_{-1} + a_{-1} \big)^{n+1-j}\wun_\Pi(z).
\end{equation}
Moreover, the fields $\{G^\pm(z), J(z), L(z), U_3(z), \dots, U_n(z)\}$ strongly generate $\uQHR{\kk}{\slnpone}{\fsub}$.
\end{mthm}

\noindent A proof of the existence of the embedding $\uQHR{\kk}{\slnpone}{\fsub} \hookrightarrow \uQHR{\kk}{\slnpone}{\freg} \otimes \Pi$ at critical level, in addition to explicit formulae for strong generators, was obtained in \cite{GenStrong20}. Therefore the inverse quantum hamiltonian reduction from $\uQHR{\kk}{\slnpone}{\freg}$ to $\uQHR{\kk}{\slnpone}{\freg}$ exists for all $\kk$.

Analogous to the construction for $\saffvoa{\kk}{\sltwo}$ \cite{CreMod13} and $\ubpvoak$ \cite{AdaRea20}, the existence of an inverse quantum hamiltonian reduction embedding can be used to construct $\uQHR{\kk}{\slnpone}{\fsub}$-modules. In light of the explicit formulae above, we are able to choose appropriate $\uQHR{\kk}{\slnpone}{\freg}$- and $\Pi$-modules such that their tensor product is a positive-energy weight $\uQHR{\kk}{\slnpone}{\fsub}$-module by restriction. 

A particularly important class of $\uQHR{\kk}{\slnpone}{\fsub}$-modules consists of tensor products of irreducible $\uQHR{\kk}{\slnpone}{\freg}$-modules $M$ with relaxed $\Pi$-modules $\halflatticemod{-1}{\lambda}$ where $\lambda \in \CC$. The corresponding $\uQHR{\kk}{\slnpone}{\fsub}$-module is denoted by $\tpmod{M}{-1,\lambda} = M \otimes \halflatticemod{-1}{\lambda}$.

\begin{mthm}[Proposition \ref{prop:relaxedmodfromhw}]
Let $M$ be an irreducible $\uQHR{\kk}{\slnpone}{\freg}$-module. Then 
\begin{itemize}
\item $\tpmod{M}{-1,\lambda}$ is an indecomposable relaxed highest-weight $\uQHR{\kk}{\slnpone}{\fsub}$-module for all $\lambda \in \CC$.
\item $\tpmod{M}{-1,\lambda}$ is irreducible for almost all $\lambda \in \CC$.
\end{itemize}
\end{mthm}

\noindent The final main result of this paper is concerned with when the embedding of Theorem \ref{thm:invemb} descends to an embedding of simple quotients $\sQHR{\kk}{\slnpone}{\fsub} \hookrightarrow \sQHR{\kk}{\slnpone}{\freg} \otimes \Pi$. Despite having little-to-no general information about singular vectors in $\uQHR{\kk}{\slnpone}{\fsub}$, this question can be answered and manifests as a restriction on the level $\kk$.

\begin{mthm}[Theorem \ref{thm:descent}]
$\sQHR{\kk}{\slnpone}{\fsub}$ embeds into $\sQHR{\kk}{\slnpone}{\freg} \otimes \Pi$ if and only if $i(\kk+n) \notin \ZZ_{\ge 1}$ for all $i \in \{1, \dots, n\}$.
\end{mthm}

\noindent Combining the embedding of simple quotients with the construction of relaxed modules shows that, at the levels allowed by Theorem \ref{thm:descent}, $\sQHR{\kk}{\slnpone}{\fsub}$ admits infinitely many irreducible modules in addition to reducible-but-indecomposable modules. That is,  $\sQHR{\kk}{\slnpone}{\fsub}$ is nonrational at such levels. This includes nondegenerate admissible levels $\kk = \fracuv-n-1$ where $\uu, \vv \in \ZZ_{\ge n+1}$ and $\text{gcd}\{\uu,\vv \} = 1$. At these levels, $\sQHR{\kk}{\slnpone}{\freg}$ is rational and is known as the $\VOA{W}_{n+1}$ minimal model. 

As seen for $\saffvoa{\kk}{\sltwo} = \sQHR{\kk}{\sltwo}{\fsub}$ and $\sbpvoak \cong \sQHR{\kk}{\slthree}{\fsub}$, the fact that infinitely many relaxed modules can be realised in terms of $\VOA{W}_{n+1}$ minimal model modules allows for the construction of `nonrational minimal models'. Modular transformations and Grothendieck fusion rules for these nonrational minimal models are obtained using the standard module formalism. The data of the corresponding $\VOA{W}_{n+1}$ minimal model features prominently in these cases. In light of the results of this paper, we strongly suspect a similar approach will work for nondegenerate admissible level $\sQHR{\kk}{\slnpone}{\fsub}$ with $n>2$ and more broadly for nondegenerate admissible level $\sQHR{\kk}{\mathfrak{g}}{\fsub}$ with $\mathfrak{g} \neq \slnpone$.

\subsection{Outline}

We start by discussing the three families of vertex operator algebras central to this paper. The first are subregular W-algebras $\uQHR{\kk}{\mathfrak{g}}{\fsub}$ described in \cref{sec:subreg}. Focusing on type $A_n$ (i.e. $\mathfrak{g} = \slnpone$), a `spectral flow' automorphism of $\uQHR{\kk}{\slnpone}{\fsub}$ is defined and an important class of $\uQHR{\kk}{\slnpone}{\fsub}$-modules is identified. In \cref{sec:reg}, regular W-algebras $\uQHR{\kk}{\mathfrak{g}}{\freg}$ are introduced. The representation theory of $\VOA{W}_{n+1}$ minimal models, which are the rational vertex operator algebras $\sQHR{\kk}{\slnpone}{\freg}$ when  $\kk$ is  nondegenerate-admissible, is described following \cite{AraRat15}. 

The final vertex operator algebra central to this story is the half-lattice vertex algebra $\Pi$ \cite{BerRep02} described in \cref{sec:halflat}. There, we explain how to construct this vertex algebra, choose a conformal structure and define certain ‘relaxed’ $\Pi$-modules that will prove crucial for our investigations.

\cref{sec:invemb} focuses on the relationship between these three families of vertex operator algebras. Unravelling this relationship requires free-field realisations of both $\uQHR{\kk}{\slnpone}{\freg}$ \eqref{eq:ffReg} and $\uQHR{\kk}{\slnpone}{\fsub}$ (Proposition \ref{prop:ffSub}). In \cref{subsec:embedding}, the free-field realisations are used to show that for generic $\kk$, there exists an `inverse quantum hamiltonian reduction' embedding $\uQHR{\kk}{\slnpone}{\fsub} \hookrightarrow \uQHR{\kk}{\slnpone}{\freg} \otimes \Pi$ with an explicitly known screening operator (Theorem \ref{thm:invemb}). This embedding is made even more explicit in \cref{subsec:explicit} where we decompose free-field strong generators of $\uQHR{\kk}{\slnpone}{\fsub}$ \cite{GenStrong20} in terms of fields in $\uQHR{\kk}{\slnpone}{\freg}$ and $\Pi$. In particular, the decompositions describe the embedding for all non-critical $\kk$. This section concludes with some examples.

With the inverse quantum hamiltonian reduction embedding in hand, \cref{sec:relaxedmods} explores the consequences for the representation theory of $\uQHR{\kk}{\slnpone}{\fsub}$. This begins with \cref{subsec:relaxedmod} where we show that taking tensor products of appropriate $\uQHR{\kk}{\slnpone}{\freg}$- and $\Pi$-modules results in ($\ZZ$-graded) $\uQHR{\kk}{\slnpone}{\fsub}$-modules. The characters of these $\uQHR{\kk}{\slnpone}{\fsub}$-modules decompose as products of the characters of the component $\uQHR{\kk}{\slnpone}{\freg}$- and $\Pi$-modules (Corollary \ref{cor:char}). We also prove a number of results regarding when such a tensor product $\uQHR{\kk}{\slnpone}{\fsub}$-module inherits structural properties of its component $\uQHR{\kk}{\slnpone}{\freg}$-module.

Particularly important for logarithmic conformal field theoretic applications are relaxed highest-weight $\uQHR{\kk}{\slnpone}{\fsub}$-modules. Proposition \ref{prop:relaxedmodfromhw} shows that such modules can be constructed as tensor products of irreducible $\uQHR{\kk}{\slnpone}{\freg}$-modules with certain relaxed $\Pi$-modules. 

When the embedding in Theorem \ref{thm:invemb} descends to an embedding of simple quotients $\sQHR{\kk}{\slnpone}{\fsub} \hookrightarrow \sQHR{\kk}{\slnpone}{\freg} \otimes \Pi$ is determined in \cref{sec:simple}. The only admissible levels satisfying these conditions are the nondegenerate ones, at which $\sQHR{\kk}{\slnpone}{\freg}$ is rational. We conclude with the construction of infinitely many relaxed highest-weight $\sQHR{\kk}{\slnpone}{\fsub}$-modules at such levels in \cref{sec:standard}.

The operator product expansions of $\uQHR{\kk}{\mathfrak{g}}{\fsub}$ for $\mathfrak{g} = \sltwo$, $\slthree$ and $\SLA{sl}{4}$ are detailed in Appendix \ref{app:opes} for convenience.

\addtocontents{toc}{\SkipTocEntry}
\subsection*{Acknowledgements}
The author would like to thank David Ridout for countless helpful discussions and feedback on earlier versions of this article. The author would also like to thank Thomas Creutzig for interesting discussions about free-field realisations of W-algebras and bosonisation, and Drazen Adamovi\'c for assistance in dealing with non-generic levels. 
This work is supported by an Australian Government Research Training Program (RTP) Scholarship.

\section{Subregular W-algebras, regular W-algebras and the half-lattice}
\subsection{Subregular W-algebras} \label{sec:subreg}
We begin by defining the main vertex operator algebra of interest and establishing some useful notation. A more detailed account of the strong generators of $\uQHR{\kk}{\slnpone}{\fsub}$ following \cite{GenStrong20} will be deferred to \cref{subsec:embedding}.

\begin{definition}
Let $\mathfrak{g}$ be a simple, finite-dimensional Lie algebra over $\CC$ and $\kk \neq -h^\vee$. The (universal) \emph{subregular W-algebra} $\uQHR{\kk}{\mathfrak{g}}{\fsub}$ is the quantum hamiltonian reduction of the level-$\kk$ universal affine vertex operator algebra $\uaffvoa{\kk}{\mathfrak{g}}$ corresponding to the subregular nilpotent orbit in $\mathfrak{g}$. Denote its unique simple quotient by  $\sQHR{\kk}{\mathfrak{g}}{\fsub}$.
\end{definition}

\noindent While not much is known about subregular W-algebras of type $A$ outside of a few small $n$ examples to be encountered shortly, even less is known about other types. 

The most studied, non-type $A$, non-super example is the type $C_2$ ($\mathfrak{g} = \SLA{sp}{4} \cong \SLA{so}{5}$) subregular W-algebra whose operator product expansions are listed in Section 5 of \cite{Fasquel2020}. The representation theory of $\sQHR{\kk}{ \SLA{sp}{4}}{\fsub}$ at certain levels has been explored \cite{Fasquel2020}, and work on the inverse quantum hamiltonian reduction question has been done from a 4D superconformal field theoretic perspective \cite{Beem21}.

From now on we take $\mathfrak{g} = \slnpone$ (i.e. type $A_n$) and use the notation
\begin{equation}
    \uSubkk = \uQHR{\kk}{\slnpone}{\fsub}, \quad \sSubkk = \sQHR{\kk}{\slnpone}{\fsub}.
\end{equation}
The nilpotent element $\fsub \in \slnpone$ can be taken to be $e_{-\alpha_2}+\dots+e_{-\alpha_n}$, where $\alpha_i$ denotes the $i$'th simple root of $\slnpone$ and $\set{h_i, e_{\alpha_i}, e_{-\alpha_i}}$ is the corresponding $\sltwo$ triple in the Chevalley basis of $\slnpone$. The vertex algebra $\uSubkk$ was long suspected to be isomorphic to the \emph{Feigin--Semikhatov vertex algebra} $\FS{n+1}$ \cite{FS04}. This was proven recently (Theorem 6.9 in \cite{Gen17}) utilising a certain free-field realisation of $\uSubkk$. We will return to the free-field realisation of $\uSubkk$ in \cref{subsec:embedding} as it provides us with convenient explicit formulae for strong generators of $\uSubkk$. 

Certain choices for $n$ in $\uSubkk$ give vertex operator algebras that are well-known. Indeed the main motivation for applying the approach described in this paper to $\uSubkk$ is the success of this approach in small $n$ cases.

\begin{example}[$n=1$]
$\uSub{\kk}{2}$ is the universal affine vertex algebra $\uaffvoa{\kk}{\sltwo}$. It has strong generators denoted by $h(z)$, $e(z)$ and $f(z)$ and their operator product expansions are well-known and are given in Appendix \ref{subsec:A1}. 

That this is an affine vertex algebra and not something more exotic is due to the subregular nilpotent orbit of $\sltwo$ being equal to $\{\left[\begin{smallmatrix} 0&0\\0&0 \end{smallmatrix}\right]\}$. This is the only $n$ for which the affine vertex algebra and subregular W-algebra coincide.
\end{example}

\begin{example}[$n=2$]
$\uSub{\kk}{3}$ is isomorphic to the Bershadsky--Polyakov algebra $\ubpvoak$ \cite{PolGau90, BerCon91} which has strong generators denoted by $J(z)$, $L(z)$, $G^+(z)$ and $G^-(z)$. Corresponding operator product expansions are given in Appendix \ref{subsec:A2}. 

Interestingly the Bershadsky--Polyakov algebra is also isomorphic to the \emph{minimal} W-algebra corresponding to $\slthree$. This helps in, for example, the classification of highest-weight $\ubpvoak$-modules presented in \cite{Feh21} as the general results about minimal W-algebras from \cite{Arakawa2005} apply in this case. This is the only $n$ for which the subregular and minimal nilpotent orbits/W-algebras coincide.
\end{example}

\noindent In general there exists a set of strongly generating fields
\begin{equation} \label{eq:stronggens}
     \left\{G^+(z), J(z), L(z), U_3(z), \dots, U_n(z), G^-(z) \right\} \subset \uSubkk ,
\end{equation}
where we omit the fields $L(z)$, $U_i(z)$ when $n=1$ and omit just the fields $U_i(z)$ when $n=2$. For $n>1$, we can take $L$ to be a conformal vector. The conformal vector of $\uSub{\kk}{2}$ is given by the usual Sugawara construction for $\uaffvoa{\kk}{\sltwo}$. 

That such strongly-generating fields of $\uSubkk$ exist is a consequence of Theorem 4.1 in \cite{KacQua04}. The conformal dimensions with respect to $L$ of the strong generators of $\uSubkk$ above are $1,1,2,3,\dots,n,n$ respectively, and the central charge corresponding to $L(z)$ is
\begin{equation} \label{eq:subcc}
    \Subcck = -\frac{\left( n (\kk+n)-1\right) \left(\kk (n-1) (n^2 + 5 n -2)+(n+1)( n^3 + 3n^2-9n +2)\right)}{(n+1) (\kk+n+1)}.
\end{equation}
As mentioned earlier, the case when $\kk$ is an admissible level for $\slnpone$ is of particular importance for applications to logarithmic conformal field theory. Admissible levels are those $\kk$ satisfying
\begin{equation}
    \kk+n+1 = \frac{\uu}{\vv}, \quad \text{where } \uu \in \ZZ_{\ge n+1}, \vv \in \ZZ_{\ge 1} \text{ and } \text{gcd}\{\uu,\vv \} = 1.
\end{equation} 
At such levels, we use the special notation $\Subminmoduv = \sSubkk$. When $\vv = n$, $(\slnpone,\kk)$ forms an \emph{exceptional pair} \cite{KacWak08}. The corresponding simple vertex operator algebra $\Subminmod{\uu}{n}$ is rational and the modular transformations of characters and fusion rules are in principle known \cite{AvE19}.

Operator product expansions for $\uSubkk$ can be worked out on a case-by-case basis but only a handful are required in what follows. For example,

\begin{equation} \label{eq:subopes}
\begin{aligned}
    J(z) G^\pm(w) \sim \frac{\pm G^\pm(w)}{z-w}, \quad &J(z)J(w) \sim \frac{\ell_n(\kk)\wun}{(z-w)^2}, \quad \text{where  } \ell_n(\kk) = \frac{n \kk}{n+1} + n-1,\\
    L(z) G^\pm(w) \sim \frac{\left(n+1\pm(1-n) \right)G^\pm(w)}{2(z-w)^2} + & \frac{\partial G^\pm(w)}{z-w}, \quad L(z) J(w) \sim \frac{-(n-1)\ell_n(\kk)\wun}{(z-w)^3} + \frac{J(w)}{(z-w)^2} +  \frac{\partial J(w)}{z-w}.
\end{aligned}
\end{equation}
Another important operator product expansion is that between the fields $G^+(z)$ and $G^-(z)$. The other strong generators of $\uSubkk$ in \eqref{eq:stronggens} all appear somewhere in this expansion. That is, $G^+$ and $G^-$ actually generate $\uSubkk$ \cite{FS04}. The complexity of each successive singular term grows rather quickly so we only show the first few terms here. The ellipsis contains all singular terms of order $j < n-1$. 
\begin{align} \label{eq:GpGmope}
    G^+(z)& G^-(w) \sim 
    \frac{\lambda_{n}(n,\kk)\wun}{(z-w)^{n+1}} + \frac{(n+1)\lambda_{n-1}(n,\kk)J(w)}{(z-w)^{n}} \\
    &+ \lambda_{n-2}(n,\kk)\frac{\frac{n}{2}(n+1)\no{JJ}(w)-(\kk+n+1)L(z) + \frac{1}{2}\left((n+1)(n^2-2)+\kk(n+2)(n-1) \right) \partial J(w)}{(z-w)^{n-1}} +\dots, \notag
\end{align}
where
\begin{equation}
    \lambda_{j}(n,\kk) = \prod_{m=1}^j (m(k+n)-1).
\end{equation}
Additional terms of this operator product expansion are presented in Appendix A of \cite{FS04} albeit with respect to a slightly different set of strong generators. To appreciate the complexity of the full set of operator product expansions of $\uSubkk$ for $n>2$, one needs only to refer to the operator product expansions for $\uSub{\kk}{4}$ presented in Appendix \ref{subsec:A3}.

Choosing the conformal structure defined by $L$ has the drawback of making $J(z)$ not quasiprimary and introducing an asymmetry in the conformal dimensions of $G^+$ and $G^-$. This isn't a problem \textit{a priori} but can be rectified by choosing the conformal field to be
\begin{equation} \label{eq:alternativeconformalvec}
    \tilde{L}(z) = L(z)-\frac{n-1}{2} \partial J(z).
\end{equation}
With respect to $\tilde{L}$, $G^+$ and $G^-$ have conformal dimension $(n+1)/2$ and $J(z)$ is a primary field of conformal dimension 1. The $\tilde{L}$ conformal structure has the drawback of requiring the consideration of twisted modules when $n$ is even owing to the half-integer conformal dimension of $G^\pm$:

An important extension of the work presented here is to consider the modularity of conjectured standard modules and compute their Grothendieck fusion rules. The presence of twisted modules complicates such computations as care must be given to which sector one is working in. Therefore unless otherwise indicated we will keep $L$ as the conformal vector. 

Another reason for sidestepping twisted modules is the existence of a \textit{spectral flow} automorphism of $\uSubkk$ that exchanges twisted and untwisted sectors when $n$ is even. To construct this automorphism, we expand homogeneous fields of $\uSubkk$ as
\begin{equation}
    A(z) = \sum_{m \in \ZZ} A_{(m)} z^{-m-1} = \sum_{m \in \ZZ} A_m z^{-m-\Delta_A}
\end{equation}
where $\Delta_A$ is the conformal dimension of $A$ with respect to $L(z)$. The spectral flow automorphism is constructed using certain intertwining operators \cite{LiPhy97}.

\begin{proposition}
Let $\ell \in \ZZ$. The map $\sfsymb^\ell:\uSubkk \rightarrow \uSubkk$ defined by
\begin{equation} \label{eq:sfdef}
    \sfmod{\ell}{A(z)} = Y\left(\Lambda(\ell J,z)A, z\right), \quad \text{where  } \Lambda(\ell J,z) = z^{-\ell J_0} \prod_{m=1}^\infty \emph{exp} \left( \frac{(-1)^m}{m} \ell J_m z^{-m} \right).
\end{equation}
is a vertex algebra automorphism, where $Y$ is the vertex map for $\uSubkk$.
\end{proposition}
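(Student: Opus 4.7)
The plan is to recognise \eqref{eq:sfdef} as an instance of Li's general $\Delta$-operator construction \cite{LiPhy97}. The relevant theorem asserts that for any vertex algebra $V$ and any Heisenberg-type state $h \in V$ (i.e.\ one with central $h(z)h(w)$ OPE) satisfying (i) $h_0$ acts semisimply on $V$ with integer eigenvalues and (ii) $h_m$ acts locally nilpotently on $V$ for every $m \geq 1$, the operator-valued series $\Lambda(h,z)$ determines a vertex algebra automorphism via the rule $A(z) \mapsto Y(\Lambda(h,z) A, z)$, whose two-sided inverse arises from $\Lambda(-h,z)$. The proof therefore reduces to verifying (i) and (ii) for $h = \ell J$; the Heisenberg condition itself is visible in \eqref{eq:subopes}.

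For (i), since $\uSubkk$ is strongly generated by the fields of \eqref{eq:stronggens}, it suffices to read off the $J_0$-action on each generator. The OPE \eqref{eq:subopes} gives $J_0 G^{\pm} = \pm G^{\pm}$ and $J_0 J = 0$ directly, while a short Borcherds skew-symmetry computation using the $L(z)J(w)$ OPE in \eqref{eq:subopes} yields $J_0 L = 0$ (the candidate obstructive term is a multiple of $T^2 \vac$, which vanishes). For the higher-weight generators $U_i$ with $3 \leq i \leq n$, the explicit free-field presentation obtained in \cref{subsec:explicit} writes each $U_i$ as a sum of tensor products of a $J_0$-neutral regular W-algebra field $W_j$ with a $b_0$-neutral element of $\Pi$, so $J_0 U_i = 0$. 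Multiplication by $\ell \in \ZZ$ preserves integrality of eigenvalues, so (i) holds on all of $\uSubkk$.

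Condition (ii) follows at once from the conformal grading: $\uSubkk$ is $\ZZ_{\geq 0}$-graded by the $L_0$-eigenvalue, and $J_m$ strictly decreases conformal weight by $m$ units for each $m \geq 1$, so every homogeneous vector is annihilated by a sufficiently high power of $J_m$. With (i) and (ii) established, Li's theorem produces the automorphism $\sfsymb^\ell$ in exactly the form stated, with inverse $\sfsymb^{-\ell}$. The only minor point of friction is the forward reference to \cref{subsec:explicit} for the $J_0$-action on the $U_i$; this dependence can be avoided by extracting the $J_0$-eigenvalue of $U_i$ directly from the subleading terms of the $G^{+}(z) G^{-}(w)$ OPE beyond those displayed in \eqref{eq:GpGmope}.
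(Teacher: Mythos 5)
Your proposal is correct and follows the same route as the paper, which simply cites Proposition 3.2 of \cite{LiPhy97} and calls the verification ``straightforward''; you have in effect supplied the omitted check of Li's hypotheses (integrality and semisimplicity of the $J_0$-action on the strong generators, local nilpotency of $J_m$ for $m\ge 1$ from the lower-bounded conformal grading), and your observations that $J_0 L = 0$ by skew-symmetry and that the $U_i$ are $J$-charge zero agree with what the paper records. The only cosmetic caution is your use of $T$ for the translation operator, which clashes with the paper's $T(z)$ denoting the energy-momentum field of $\uRegkk$.
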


\noindent That this a vertex algebra automorphism is a straightforward application of Proposition 3.2 in \cite{LiPhy97}. Direct computation shows that 
\begin{equation} \label{eq:sfexamples}
\begin{gathered}
    \sfmod{\ell}{G^\pm(z)} = z^{\mp \ell} G^{\pm}(z), \quad \sfmod{\ell}{J(z)} = J(z) - \ell_n(\kk) \ell z^{-1}, \\
    \sfmod{\ell}{\tilde{L}(z)} = \tilde{L}(z) - \ell z^{-1} J(z) + \frac{1}{2}\ell_n(\kk) \ell^2 z^{-2}.
\end{gathered}
\end{equation}
The action of spectral flow can also be written in terms of modes as follows:
\begin{equation}
\begin{gathered}
    \sfmod{\ell}{G^\pm_m} = G^\pm_{m \mp \ell}, \quad 
    \sfmod{\ell}{J_m} = J_m - \ell_n(\kk) \ell \delta_{m,0} \wun, \\
    \sfmod{\ell}{\tilde{L}_m} = \tilde{L}_m - \ell J_m + \frac{1}{2}\ell_n(\kk) \ell^2 \delta_{m,0} \wun.
\end{gathered}
\end{equation}
In principle, for any fixed $n$, one could also compute the action of spectral flow on the the fields $U_3(z), \dots, U_n(z)$ given complete information about the relevant operator product expansions. In the first case where one gets such fields ($n=3$), spectral flow acts on the field $U_3(z)$ as the identity automorphism. However, for general $n$ where the operator product expansions are more involved, the action of spectral flow on the fields $U_i(z)$ is more difficult to determine. 

What can be shown using the free-field expansions described in \cref{subsec:explicit} is that the fields $\set{U_3(z), \dots, U_n(z)}$ have `$J$-charge' 0, i.e. that the $J_0 U_i = 0$ for all $i$. Therefore by \eqref{eq:sfdef}, spectral flow acts on the modes of the form $(U_i)_m$ as
\begin{equation}
    \sfmod{\ell}{(U_i)_m} = (U_i)_m + \dots.
\end{equation}
As the characters we will eventually define here for $\uSubkk$-modules only keep track of $J_0$- and $L_0$-eigenvalues, the formulae \eqref{eq:sfexamples} and the fact that the $U_i(z)$ have $J$-charge 0 is sufficient for our purposes. 

From the definition of spectral flow \eqref{eq:sfdef} it is clear that the inverse of $\sfsymb^\ell$ is $\sfsymb^{-\ell}$. Moreover, spectral flow is only a vertex operator algebra automorphism for $\ell = 0$ as
\begin{equation} \label{eq:spectralflowofL}
    \sfmod{\ell}{L(z)} = \sfmod{\ell}{\tilde{L}(z)}+\frac{n-1}{2}\partial \sfmod{\ell}{J(z)} = L(z) - \ell z^{-1} J(z) + \frac{1}{2}\ell_n(\kk)\left( \ell^2 +  \ell (n-1) \right) z^{-2}.
\end{equation}
Given a $\uSubkk$-module $\mathcal{M}$ and an automorphism $\omega$ of $\uSubkk$, we can define a new $\uSubkk$-module $\omega^*(\mathcal{M})$ (where $\omega^*$ is an arbitrary vector space isomorphism) by the action
\begin{equation}
    A(z) \cdot \omega^*(v) = \omega^*(\omega^{-1}\left(A(z)\right)v), \quad \text{where } v \in \mathcal{M}, \ A(z) \in \uSubkk.
\end{equation}
Since this action doesn't depend on the choice of $\omega^*$ up to vertex algebra module isomorphisms, we'll drop the star and denote  $\omega^*(\mathcal{M})$ by  $\omega(\mathcal{M})$.

As in the $n=1$ and $2$ cases, we restrict attention to a particular subclass of $\uSubkk$-modules called \emph{weight modules}. The corresponding category $\categ{W}_{\kk}$ of weight modules for the simple quotient $\sSubkk$ is expected to have the modular properties desirable for defining a logarithmic conformal field theory for certain $\kk$. 

To define weight modules, let $\mathsf{U}$ be the mode algebra of $\uSubkk$. That is, $\mathsf{U}$ is the unital associative $\CC$-algebra spanned by the modes $A_n$ for $A(z) \in \uSubkk$ subject to the commutation relations defined by the operator product expansions. The grading on $\mathsf{U}$ by $[L_0, \cdot]$-eigenvalue gives a generalised triangular decomposition \cite{KacQua04}
\begin{equation}
    \mathsf{U} = \mathsf{U}_> \otimes \mathsf{U}_0 \otimes \mathsf{U}_<,
\end{equation}
where $\mathsf{U}_>$, $\mathsf{U}_0$ and $\mathsf{U}_<$ denote the unital subalgebras generated by $A_m$ for all homogeneous $A(z) \in \uSubkk$ with $m>0$, $m=0$ and $m>0$ respectively.

\begin{definition}
\begin{itemize}
\item A vector $v$ in a $\sSubkk$-module $\mathcal{M}$ is a \emph{weight vector} of \emph{weight} $(j,\Delta)$ where $j$, $\Delta \in \CC$ if it is a simultaneous eigenvector of $J_0$ and $L_0$ with eigenvalues $j$ (\emph{charge)} and $\Delta$ (\emph{conformal dimension}) respectively. The nonzero simultaneous eigenspaces of $J_0$ and $L_0$ are called \emph{weight spaces} of $\mathcal{M}$. If $\mathcal{M}$ has a basis of weight vectors and each weight space is finite-dimensional, then $\mathcal{M}$ is a \emph{weight module}.
\item A vector in a $\uSubkk$-module is a \emph{highest-weight vector} if it is a weight vector that is annihilated by the action of $\mathsf{U}_>$ and $G^+_0$. A \emph{highest-weight module} is a $\uSubkk$-module generated by a highest-weight vector.
\item A vector in a $\uSubkk$-module is a \emph{relaxed highest-weight vector} if it is a weight vector that is annihilated by the action of $\mathsf{U}_>$. A \emph{relaxed highest-weight module} is a $\uSubkk$-module generated by a relaxed highest-weight vector.
\end{itemize}
\end{definition}

\noindent The definition of conjugate highest-weight vectors and modules is identical except with $G^+_0$ replaced with $G^-_0$. From the actions of the spectral flow automorphism, we see that if $v \in \mathcal{M}$ is a weight vector of charge $j$ and conformal dimension $\Delta$, $\sfmod{\ell}{v} \in \sfmod{\ell}{\mathcal{M}}$ is a weight vector with charge and conformal dimension
\begin{equation}
    j' = j + \ell_n(\kk) \ell, \quad \Delta' = \Delta + j \ell + \frac{1}{2}\ell_n(\kk) \left(\ell^2 -\ell(n-1)\right)
\end{equation}
respectively.

\subsection{Regular W-algebras and their modules} \label{sec:reg}
By far the most studied and well-understood W-algebras are the \emph{regular} (or \emph{principal}) W-algebras $\uQHR{\kk}{\mathfrak{g}}{\freg}$. There is much one can say about these vertex operator algebras (see for example the reviews \cite{BouSom91,BouWSym93}). For the purpose of gaining insights into subregular W-algebras there are two main features at play:

One is the partial ordering on quantum hamiltonian reductions of $\uaffvoa{\kk}{\mathfrak{g}}$ induced by an ordering on nilpotent orbits \cite{Collingwood17}. There is evidence that in addition to the usual quantum hamiltonian reduction, there is a `partial quantum hamiltonian reduction' between two W-algebras related by the partial ordering. In particular, the regular nilpotent orbit is strictly greater than the subregular nilpotent orbit for all $\mathfrak{g}$.

The philosophy adopted here is that partial quantum hamiltonian reductions can be inverted in the sense described by Adamovi\'c and others \cite{AdaRea17, AdaRea20}. In light of the observation about subregular and regular nilpotent orbits, this manifests concretely as the subregular W-algebra being able to be embedded into the regular W-algebra tensored with some lattice vertex algebra.

The other main feature of regular W-algebras is that the simple regular W-algebra  $\sQHR{\kk}{\mathfrak{g}}{\freg}$ is rational \cite{AraRat15} for almost all admissible levels. These are particularly interesting levels from a logarithmic conformal field theory point of view.

\begin{definition}
Let $\mathfrak{g}$ be a simple, finite-dimensional Lie algebra over $\CC$ and $\kk \neq -h^\vee$. The  (universal) \emph{regular W-algebra} $\uQHR{\kk}{\mathfrak{g}}{\freg}$ is defined as the quantum hamiltonian reduction of the level-$\kk$ universal affine vertex operator algebra $\uaffvoa{\kk}{\mathfrak{g}}$ corresponding to the regular nilpotent orbit in $\mathfrak{g}$. Denote its unique simple quotient by $\sQHR{\kk}{\mathfrak{g}}{\freg}$.
\end{definition}

\noindent As before, we let $\mathfrak{g} = \slnpone$and use the notation
\begin{equation}
    \uRegkk = \uQHR{\kk}{\mathfrak{g}}{\freg}, \quad \sRegkk = \sQHR{\kk}{\mathfrak{g}}{\freg}.
\end{equation}
In this case, the nilpotent element $\freg \in \slnpone$ can be taken to be $e_{-\alpha_1}+e_{-\alpha_2}+\dots+e_{-\alpha_n}$. This W-algebra was first defined in the $\mathfrak{g} = \slthree$ case \cite{ZamInf85} and later for $\mathfrak{g} = \slnpone$ \cite{FatLuk88}. There, the regular W-algebra is described as the intersections of kernels of certain screening operators: Let $\VOA{H}_\alpha$ be the Heisenberg vertex algebra strongly-generated by $n$ fields $\alpha_1(z), \dots, \alpha_n(z)$ with operator product expansion
\begin{equation}
    \alpha_i(z) \alpha_j(w) \sim \frac{A_{i,j}(\kk+n+1) \mathbbm{1}}{(z-w)^2},
\end{equation}
where $[A]$ is the Cartan matrix of $\slnpone$. For non-critical $\kk$ (i.e. $\kk \neq -n-1$), there exists a vertex operator algebra embedding $\uRegkk \hookrightarrow \VOA{H}_\alpha$ whose image is determined by
\begin{equation} \label{eq:ffReg}
    \uRegkk \cong \bigcap_{i=1}^n \text{ ker} \int \ee^{\frac{-1}{\kk+n+1}\alpha_i}(z) \ dz.
\end{equation}
when $\kk$ is generic. Such an embedding (of vertex algebras) also exists for critical level. This identification gives a useful free-field description of strong generators for the regular W-algebra using the \emph{Miura transform}. Following the presentation in \cite[Section.~6.3.3]{BouWSym93}, let $\varepsilon_s$, $s = 1,\dots, n+1$ denote the weights of the defining representation of $\slnpone$ normalised so that $\alpha_i = \varepsilon_i-\varepsilon_{i+1}$ for all simple roots $\alpha_i$ of $\slnpone$, and $\sum_s \varepsilon_s = 0$. The relationship between the simple roots $\alpha_i$ and weights $\varepsilon_s$ can be inverted and upgraded to a relationship between fields given by
\begin{equation}
    \varepsilon_s(z) = -\sum_{j=1}^{s-1}\frac{j}{n+1}\alpha_j(z) + \sum_{j=s}^{n} \frac{n+1-j}{n+1} \alpha_j(z).
\end{equation}
Define the generating function of a set of fields $\set{W_0(z), W_1(z) , \dots, W_{n+1}(z)} \subset \VOA{H}_\alpha$ by
\begin{equation}
    R_n(z) = -\sum_{s=0}^{n+1} W_s(z) \left( (\kk+n) \partial  \right)^{n+1-s} 
    = \left( (k+n) \partial - \varepsilon_{n+1}(z) \right) \dots \left( (k+n) \partial - \varepsilon_1(z) \right).
\end{equation}
It can be shown that the operator product expansion of $R_n(z)$ and the screening operators in \eqref{eq:ffReg} is a total derivative so the component fields $W_s(z)$ are all $\uRegkk$ fields. Additionally, the fields $W_2(z), \dots, W_{n+1}(z)$ strongly-generate $\uRegkk$ for all $\kk$ \cite{FatLuk88}. These fields are not in general quasi-primary but one can usually take appropriate linear combinations of them and their derivatives to obtain primary fields. Fortunately this is not necessary for our purposes.

To find convenient closed-form expressions for the fields $W_s(z)$ in terms of the fields of $\VOA{H}_\alpha$, we recall the noncommutative elementary symmetric polynomials as described in \cite[Ch.~12]{Molev18}.

\begin{definition}
Let $\omega_1, \dots, \omega_N$ be $N$ mutually noncommutative operators. The \emph{m-th noncommutative elementary symmetric polynomial} in $\omega_1, \dots, \omega_N$ is
\begin{equation}
    E_m(\omega_1, \dots, \omega_N) = \sum_{i_1 > \dots > i_m} \omega_{i_1} \dots \omega_{i_m}.
\end{equation}
\end{definition}

\noindent Rewriting $R_n(z)$ in terms of modes, we therefore have:
\begin{equation} \label{eq: miurabasis}
\begin{aligned}
    \big( (k+n) \partial - (\varepsilon_{n+1})_{-1} \big) \dots \big( (k+n) \partial - (\varepsilon_1)_{-1} \big) =& \ E_{n+1}\big( (k+n) \partial - (\varepsilon_1)_{-1}, \dots, (k+n) \partial - (\varepsilon_{n+1})_{-1}  \big)\\
    =& \  -\sum_{s=0}^{n+1} (W_s)_{(-1)} \left( (\kk+n) \partial  \right)^{n+1-s}
\end{aligned}
\end{equation} 
By Proposition 12.4.4 of \cite{Molev18}, 
\begin{equation} \label{eq:freefieldforregfields}
    W_s = -E_{s}\big( (k+n) \partial - (\varepsilon_1)_{-1}, \dots, (k+n) \partial - (\varepsilon_{n+1})_{-1}  \big) \mathbbm{1}
\end{equation}
where $\mathbbm{1}$ denotes the vacuum of $\VOA{H}_\alpha$. For example, $W_0(z) = -\mathbbm{1}(z)$, $W_1(z) = 0$ and
\begin{equation}
    W_2(z) 
    = (k+n)\sum_{j=1}^{n+1} (n+1-j) \partial \varepsilon_{j}(z) - \sum_{i > j} \no{\varepsilon_{i}(z) \varepsilon_{j}(z)}.
\end{equation}
Moreover, $T(z) = \frac{1}{\kk+n+1} W_2(z)$ is an energy-momentum field with central charge
\begin{equation}
    \Regcck = -\frac{n\left( (n+1)(\kk-1)+n^2 + 2n \right)\left((n+2)\kk + (n+1)^2\right)}{\kk+n+1}.
\end{equation}
The generating fields $W_i(z)$ have conformal dimension $i$ with repect to $T(z)$. It is known that the vertex operator algebra $\uRegkk$ is reducible if $\kk$ is a \emph{non-degenerate admissible level} \cite{Ara07}. That is, 
\begin{equation}
    \kk + n+1 = \frac{\uu}{\vv}, \quad \text{where } \uu, \vv \in \ZZ_{\ge n+1} \text{ and } \text{gcd}\{\uu,\vv \} = 1.
\end{equation} 
At these levels, $\sRegkk$ is rational \cite{AraRat15}. For this reason, we use the special notation $\Regminmoduv = \sRegkk$ when $\kk$ is nondegenerate-admissible. The modules of $\Regminmoduv$ are all highest-weight modules and admit a parametrisation in terms of $\slnpone$ weights.

Following Section 8.3 in \cite{AvE19a}, let $\text{Pr}^k$ be the set of principal admissible $\slnpone$ weights of level $\kk$. Each weight $\lambda \in \text{Pr}^k$ defines a central character $\gamma_\lambda : Z(\slnpone)\rightarrow \CC$ by evaluation. Let $\text{Pr}^k_\mathcal{W}$ be the set of all such central characters with $\lambda$ ranging over $\text{Pr}^k$. Associated to each $\gamma \in \text{Pr}^k_\mathcal{W}$ is a simple $\Regminmoduv$-module $\mathcal{W}_\gamma$. Simple $\Regminmoduv$-modules have been classified \cite{AraRat15} and consist only of the set of modules
\begin{equation}
	\{ \mathcal{W}_\gamma \ \vert \ \gamma \in \text{Pr}^k_\mathcal{W} \}.
\end{equation}
All the modules above are highest-weight with one-dimensional top spaces and are mutually non-isomorphic. Let $v_\gamma$ be the highest-weight vector of  $\mathcal{W}_\gamma$. Without loss of generality and owing to the Miura basis fields being strong generators of $\uRegkk$, we can view $\gamma$ as an element of $\CC^{n}$ with $\gamma \leftrightarrow (\gamma_2, \dots, \gamma_{n+1})$ defined by
\begin{equation}
    (W_i)_0 \ v_\gamma = \gamma_i v_\gamma.
\end{equation}
For example, the vacuum module is $\Regminmoduv \cong \mathcal{W}_{0}$.

\subsection{The half-lattice vertex operator algebra and its modules} \label{sec:halflat}
To describe the relationship between $\uRegkk$ and $\uSubkk$, we will also need the ``half-lattice'' vertex operator algebra $\Pi$ \cite{BerRep02}. This is the same vertex operator algebra used in the inverse quantum hamiltonian reduction relating $\uWvoak$ and $\ubpvoak$ \cite{AdaRea20} and in the inverse quantum hamiltonian reduction relating $\VOA{Vir}^\kk$ and $\uaffvoa{\kk}{\sltwo}$ \cite{AdaRea17}. 

As these are simply the $n=2$ and $n=1$ cases of the regular-subregular inverse reduction, our description of $\Pi$ closely follows that given in these cases (\cite[Sec.~3]{AdaRea20} and \cite[Sec.~4]{AdaRea17}).

Consider the abelian Lie algebra $\alg{h} = \text{span}_{\CC} \set{c,d}$, equipped with the symmetric bilinear form $\inner{\cdot\,}{\cdot}$ defined by
\begin{equation}
	\inner{c}{c} = \inner{d}{d} = 0 \quad \text{and} \quad \inner{c}{d} = 2.
\end{equation}
The group algebra $\CC[\ZZ c] = \text{span}_{\CC} \{e^{nc} \vert\ n \in \ZZ \}$ has the structure of an $\alg{h}$-module according to the formula
\begin{equation}
    h(e^{nc})= n\langle h, c \rangle e^{nc}.
\end{equation}
Denote by $\VOA{H}$ the Heisenberg vertex algebra defined by $\alg{h}$ and $\inner{\cdot\,}{\cdot}$.

\begin{definition} \label{def:halflattice}
The \emph{half lattice vertex algebra} $\halflattice$ is the lattice vertex algebra $\VOA{H} \otimes \CC[\ZZ c]$ where the action of $h \in \alg{h}$ on $\CC[\ZZ c]$ is identified with the action of the zero mode $h_0$ of $h(z) \in \VOA{H}$.
\end{definition}

\noindent A set of (strong) generating fields for $\halflattice$ is $\set*{c(z), d(z), \ee^{mc}(z) \st m \in \ZZ}$.  The \opes\ of these fields are easily determined:
\begin{equation} \label{eq:ope:halflattice}
\begin{aligned}
		c(z)c(w) &\sim 0, & c(z)d(w) &\sim \frac{2 \, \wun}{(z-w)^2}, & d(z)d(w) &\sim 0, \\
		c(z)\ee^{mc}(w) &\sim 0, & d(z)\ee^{mc}(w) &\sim \frac{2m \, \ee^{mc}(w)}{z-w}, & \ee^{mc}(z) \ee^{nc}(w) &\sim  0.
\end{aligned}
\end{equation}
We will often use the convenient orthogonal basis for the Heisenberg fields in $\halflattice$ given by
\begin{equation} \label{eq:abbasis}
	a(z) = -\frac{\ell_n(\kk)}{2} c(z) + \frac{1}{2}d(z) \quad \text{and} \quad b(z) = \frac{\ell_n(\kk)}{2} c(z) + \frac{1}{2}d(z),
\end{equation}
Note that $-\inner{a}{a} =\inner{b}{b} = \ell_n(\kk)$, while $\inner{a}{b}=0$.

The half lattice vertex algebra admits a two-parameter family of energy-momentum fields given by
\begin{equation} \label{eq:piemfield}
	t(z) = \frac{1}{2} \no{c(z)d(z)} + \alpha \pd c(z) + \beta \pd d(z), \quad \alpha, \beta \in \CC;
\end{equation}
with corresponding central charge $2-48\alpha \beta$.  We equip $\halflattice$ with the conformal structure given by \eqref{eq:piemfield} with $\alpha = \frac{n}{2} \ell_n(\kk)$ and $\beta = \frac{-1}{2}$, so that $t(z) = \frac{1}{2} \no{c(z)d(z)} - \frac{n+1}{2} \pd a(z) + \frac{n-1}{2} \pd b(z)$. The associated central charge $\lcck$ satisfies
\begin{equation}
    \lcck + \Regcck = \Subcck.
\end{equation}
At the admissible levels we are interested in, the central charge of $\halflattice$ simplifies to
\begin{equation} \label{eq:ccaddup}
	\lccuv = 2 + 12 n \ell_n(\kk) = \frac{2 \left(6 n^2 (\uu-\vv)-5 n \vv+\vv\right)}{(n+1) \vv}.
\end{equation}
With respect to $t(z)$, both $a(z)$ and $b(z)$ have conformal dimension $1$ (though both are not quasiprimary) whilst that of $\ee^{mc}(z)$ is $m$. As for $\uSubkk$ we can use the Heisenberg field $b(z)$ to define an vertex algebra automorphism of $\Pi$ called \emph{spectral flow} $\rho^\ell$ for all $\ell \in \ZZ$.

\begin{proposition}
Let $\ell \in \ZZ$. The map $\rho^\ell:\Pi \rightarrow \Pi$ defined by
\begin{equation} \label{eq:sfdefPi}
    \rho^\ell \left( A(z)\right) = Y\left(\Lambda(\ell b,z)A, z\right), \quad \text{where  } \Lambda(\ell b,z) = z^{-\ell b_0} \prod_{m=1}^\infty \emph{exp} \left( \frac{(-1)^m}{m} \ell b_m z^{-m} \right).
\end{equation}
is a vertex algebra automorphism, where $Y$ is the vertex map for $\Pi$.
\end{proposition}

\noindent The action of spectral flow on the fields $a(z)$, $b(z)$ and $\ee^{mc}(z)$ where $m \in \ZZ$ can be explicitly computed with help from \eqref{eq:ope:halflattice} and is given by
\begin{equation}
    \rho^\ell(a(z)) = a(z),\quad \rho^\ell(b(z)) = b(z) - \ell_n(\kk) \ell z^{-1},\quad \rho^\ell(\ee^{mc}(z)) = z^{-m \ell} \ee^{mc}(z).
\end{equation}
This is only a vertex operator algebra automorphism when the conformal vector $t(z)$ is preserved i.e. when $\ell = 0$. As seen with spectral flow for $\uSubkk$, for any $\Pi$-module and any $\ell \in \ZZ$ we get a new $\Pi$-module $\rho^\ell (M)$. 

Here as before, the range of $\ell$ can be extended to include $\ZZ+\frac{1}{2}$ in which case a $\ZZ$-graded $\Pi$-module becomes $\ZZ+\frac{1}{2}$-graded upon twisting with respect to $\rho^\ell$ and vice-versa.

We are interested in the positive-energy weight modules of $\halflattice$, meaning those on which the $h_0$, with $h \in \alg{h}$, act semisimply and $t_0$ has eigenvalues that are bounded below.  Such modules can be induced from the $\ZZ c$-modules generated by (certain) elements $\ee^h \in \CC[\alg{h}]$ on which $h' \in \alg{h}$ acts as $h' \cdot \ee^h = \inner{h'}{h} \, \ee^h$ \cite{BerRep02}.  The following is adapted from \cite{AdaRea17, AdaRea20} to accommodate our choice of conformal structure.

\begin{proposition} \label{prop:lvoamod}
The weight $\halflattice$-module generated from $\ee^{rb + \lambda c}$ is positive-energy if and only if $r=-1$.  In this case, the $\halflattice$-module is $\ZZ$-graded, simple and the minimal $t_0$-eigenvalue is $\frac{n}{2} \ell_n(\kappa)$.
\end{proposition}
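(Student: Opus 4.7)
The plan is to pin down the $t_0$-eigenvalues of all vectors of the form $\ee^{rb + (\lambda+m)c}$ for $m \in \ZZ$ in one calculation, read off the positive-energy condition as a linear constraint on $r$, and then verify simplicity of the $r = -1$ module by an $a_0$-projection argument supplemented by the lattice OPEs.

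Write $\ell = \ell_n(\kk)$ for brevity. From $a = -\tfrac{\ell}{2}c + \tfrac{1}{2}d$ and $b = \tfrac{\ell}{2}c + \tfrac{1}{2}d$ together with $\inner{c}{c} = \inner{d}{d} = 0$, $\inner{c}{d} = 2$, one records the bilinear data $\inner{a}{a} = -\ell$, $\inner{b}{b} = \ell$, $\inner{a}{b} = 0$, $\inner{c}{a} = \inner{c}{b} = 1$. Recognising $\tfrac{1}{2}\no{cd}$ as the standard rank-$2$ Heisenberg energy-momentum field on $\alg{h}$, I would use $L^{\text{std}}_0 \ee^v = \tfrac{1}{2}\inner{v}{v}\ee^v$ and $(\pd h)_0 \ee^v = -\inner{h}{v}\ee^v$ on the linear correction $-\tfrac{n+1}{2}\pd a + \tfrac{n-1}{2}\pd b$ to obtain
\begin{equation*}
  t_0 \, \ee^{rb + \lambda c} = \Bigl( \tfrac{r\,\ell\,(r - n + 1)}{2} + \lambda(r+1) \Bigr) \, \ee^{rb + \lambda c}.
\end{equation*}
Because $\inner{\pm c}{rb + \lambda c} = \pm r$, appropriate modes of $\ee^{\pm c}$ act non-trivially on $\ee^{rb + \lambda c}$, producing $\ee^{rb + (\lambda + m)c}$ for every $m \in \ZZ$ with $t_0$-eigenvalue $\tfrac{r\ell(r - n + 1)}{2} + (\lambda + m)(r+1)$. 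Since Heisenberg descendants only add non-negative integers to conformal weights, positive-energy is equivalent to the vanishing of the $m$-linear slope, that is, $r+1 = 0$. Substituting $r = -1$ gives the minimum eigenvalue $\tfrac{n}{2}\ell$, and the full weight set $\tfrac{n}{2}\ell + \NN$ yields the claimed $\ZZ$-grading.

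For simplicity, I would argue as follows. Let $N$ be a non-zero submodule of the $r = -1$ module; by positive-energy $N$ has a minimum-weight subspace sitting inside the top space $\bigoplus_{m \in \ZZ} \CC\, \ee^{-b + (\lambda + m)c}$. The zero mode $a_0$ acts on $\ee^{-b + (\lambda + m)c}$ by the eigenvalue $\lambda + m$, and these are pairwise distinct, so polynomial combinations of $a_0$ project any non-zero element of $N^{\text{top}}$ onto a single $\ee^{-b + (\lambda + m_0)c}$. The OPE $\ee^{c}(z) \ee^{-b + (\lambda + m)c}(w) \sim (z-w)^{-1} \ee^{-b + (\lambda + m + 1)c}(w)$ together with the analogous regular product for $\ee^{-c}$ then shows that suitable modes of $\ee^{\pm c}$ connect all top vectors. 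Finally, because the Heisenberg form on $\alg{h}$ is non-degenerate, each Heisenberg Fock module built on a top vector is simple and is recovered by applying Heisenberg modes; hence $N$ is the whole module.

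The main obstacle is the simplicity step: one has to verify with care that $a_0$-projection together with the $\ee^{\pm c}$-action suffices to reach every top vector before handing off to the Heisenberg Fock structure, whereas the conformal-weight calculation and the positive-energy dichotomy are direct consequences of the inner-product data for $\alg{h}$ and the Heisenberg vacuum formulas. This mirrors the corresponding arguments for $n=1,2$ in \cite{AdaRea17, AdaRea20}.
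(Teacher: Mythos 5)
Your computation of the $t_0$-eigenvalue $\tfrac{r\ell(r-n+1)}{2}+\lambda(r+1)$ on $\ee^{rb+\lambda c}$ is correct (it is consistent with the paper's minimal eigenvalue $\tfrac{n}{2}\ell_n(\kk)$, with the $b_0$-eigenvalue $\lambda-\ell_n(\kk)$ quoted after the proposition, and with the character formula \eqref{eq:minusonelatticemodcharac}), and the positive-energy dichotomy and $\ZZ$-grading follow exactly as you say. The paper itself gives no proof of this proposition, deferring to the $n=1,2$ cases in \cite{AdaRea17,AdaRea20}; your argument is the standard one used there, so you are supplying the omitted details rather than diverging from the paper.

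One point of order in the simplicity step: the assertion that a nonzero submodule $N$ has its minimal-weight space inside the top space does \emph{not} follow from positive-energy alone (a submodule generated by a singular vector in a highest-weight module is the generic counterexample). For this module it is true, but the proof of that fact is precisely the combination of your later two steps: project a minimal-weight element of $N$ onto a single Fock component using the pairwise-distinct $a_0$-eigenvalues $\lambda+m$, then use irreducibility of the Fock module over the nondegenerate Heisenberg to conclude that $N$ contains its lowest vector $\ee^{-b+(\lambda+m_0)c}$, which lies in the top space. Reordering so that the Fock-irreducibility argument comes first, followed by the $\ee^{\pm c}$-mode transitivity on top vectors (your OPE exponents $\inner{\pm c}{-b+(\lambda+m)c}=\mp 1$ are correct), closes the argument with no gap.
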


\noindent Denote the $\Pi$-module generated from $\ee^{rb + \lambda c}$ by $\halflatticemod{r}{\lambda}$. These $\Pi$-modules satisfy a number of nice properties: By direct computation, the eigenvalue of $b_0$ on $\ee^{-b + \lambda c}$ is $\lambda - \ell_n(\kk)$. The zero modes $\ee^{\pm c}_0$ act injectively on the top space of $\halflatticemod{-1}{\lambda}$. In general, $\halflatticemod{r}{\lambda}$ is $\ZZ$-graded as long as $r \in \ZZ$ and $\ZZ+\frac{1}{2}$-graded when  $r \in \ZZ +\frac{1}{2}$. In either case, $\halflatticemod{r}{\lambda} \cong \halflatticemod{r}{\lambda+n}$ as $\Pi$-modules for all $n \in \ZZ$. 

Finally, due to the following proposition, we can view $r$ as a spectral flow index.

\begin{proposition}
Let $r,\ell \in \frac{1}{2}\ZZ$ and $\lambda \in \CC$.
\begin{equation}
    \rho^\ell\left(\halflatticemod{r}{\lambda} \right) \cong \halflatticemod{r+\ell}{\lambda}.
\end{equation}
\end{proposition}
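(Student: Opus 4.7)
The strategy is to single out a distinguished vector in the spectral-flowed module $\rho^\ell(\halflatticemod{r}{\lambda})$ and show that it transforms under the generating fields of $\halflattice$ exactly like the canonical generator of $\halflatticemod{r+\ell}{\lambda}$; the claimed isomorphism then arises by extending a map on generators.

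Set $v_0 := \ee^{rb+\lambda c}$, the canonical generator of $\halflatticemod{r}{\lambda}$, viewed now as a vector in the underlying space of $\rho^\ell(\halflatticemod{r}{\lambda})$. The twisted action reads $A(z)\cdot_{\text{new}} v = \rho^{-\ell}(A(z))\, v$, so the explicit spectral-flow formulas displayed just above the statement give
\begin{equation*}
a_0 \cdot_{\text{new}} v_0 = \lambda\, v_0, \qquad b_0 \cdot_{\text{new}} v_0 = \bigl((r+\ell)\ell_n(\kk)+\lambda\bigr)\, v_0,
\end{equation*}
while all positive modes of $a$ and $b$ continue to annihilate $v_0$ (only the zero mode of $b$ picks up a scalar shift). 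These are precisely the Heisenberg weights of the canonical generator $\ee^{(r+\ell)b+\lambda c}$ of $\halflatticemod{r+\ell}{\lambda}$.

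I would then define the candidate $\halflattice$-module map $\phi:\halflatticemod{r+\ell}{\lambda} \to \rho^\ell(\halflatticemod{r}{\lambda})$ by sending the canonical generator to $v_0$ and extending $\halflattice$-linearly. Well-definedness reduces to checking that $v_0$ satisfies, under the twisted action, the same defining relations that present $\halflatticemod{r+\ell}{\lambda}$ as a cyclic $\halflattice$-module. The Heisenberg piece is in hand from above, and for the lattice fields one uses $\rho^{-\ell}(\ee^{mc}(z)) = z^{m\ell}\ee^{mc}(z)$ to rewrite the new modes as a shift $(\ee^{mc})_{(n)}\cdot_{\text{new}} = (\ee^{mc})_{(n+m\ell)}\cdot$ in the original module, and compares term by term with the lattice-VOA expansion of $Y(\ee^{mc},z)\ee^{(r+\ell)b+\lambda c}$. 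Surjectivity is immediate since $\rho^{-\ell}$ is an automorphism of $\halflattice$, so $v_0$ continues to generate $\rho^\ell(\halflatticemod{r}{\lambda})$ as a $\halflattice$-module. Injectivity follows from the parallel construction with $\ell \mapsto -\ell$ combined with the identity $\rho^{-\ell}\circ\rho^{\ell} = \mathrm{id}$, yielding mutually inverse maps on the cyclic generators.

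The technical heart of the argument is the lattice-field verification: aligning the $z$-power shift and cocycle factors produced by $\rho^{-\ell}$ with the lattice-translation pattern built into $\halflatticemod{r+\ell}{\lambda}$ requires careful tracking of the conformal-dimension convention $\Delta_{\ee^{mc}} = m$ and the mode-indexing shift $A_n = A_{(n+\Delta_A -1)}$. Once these are set up, however, the matching is a mechanical comparison of generating series, and the case of half-integer $r$ or $\ell$ is handled identically with the obvious adjustment that $\ZZ$-grading and $\ZZ+\tfrac{1}{2}$-grading are exchanged as required.
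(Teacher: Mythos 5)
The paper gives no proof of this proposition at all (it is stated as an adaptation of \cite{AdaRea17,AdaRea20}), so there is nothing to compare against line by line; your argument is the standard one that those references rely on, and it is correct. The weight computations check out: with $\inner{a}{b}=0$, $\inner{a}{c}=\inner{b}{c}=1$ and $\inner{b}{b}=\ell_n(\kk)$, the generator $\ee^{rb+\lambda c}$ has $a_0$- and $b_0$-eigenvalues $\lambda$ and $r\ell_n(\kk)+\lambda$, and since the twisted action is by $\rho^{-\ell}(A(z))$, only $b_0$ shifts, by $+\ell\,\ell_n(\kk)$, giving exactly the weights of $\ee^{(r+\ell)b+\lambda c}$; likewise $(\ee^{mc})_{(n)}\cdot_{\text{new}}=(\ee^{mc})_{(n+m\ell)}$ is the correct mode shift. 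The one place I would tighten the write-up is the well-definedness step: the paper never actually presents $\halflatticemod{r}{\lambda}$ by generators and relations, so rather than appealing to "defining relations of a cyclic module" it is cleaner to use the explicit description $\halflatticemod{r}{\lambda}=\bigoplus_{m\in\ZZ}F_{rb+(\lambda+m)c}$ as a sum of Heisenberg Fock spaces indexed by $\CC[\ZZ c]$, match the Fock spaces of $\rho^\ell(\halflatticemod{r}{\lambda})$ and $\halflatticemod{r+\ell}{\lambda}$ weight by weight, and then verify the intertwining of the $\ee^{mc}$ action on each graded piece (equivalently, invoke the universal property of the induced module once the generator's properties are verified). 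With either phrasing the isomorphism follows, and your remark that simplicity plus generation by $v_0$ yields bijectivity is a valid shortcut. The half-integer case is handled exactly as you say.
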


\noindent Spectral flow of the positive-energy module $\halflatticemod{-1}{\lambda}$ is only positive-energy if $\ell = 0$, in which case spectral flow coincides with the identity automorphism. As $\halflatticemod{r}{\lambda}$ are all relaxed modules for a lattice-like vertex algebra, their characters are straightforward to compute. Here, we define the character of a $\Pi$-module $M$ to be
\begin{equation}
    \charac{M}{z}{q} = \text{tr}_M \left(z^{b_0} q^{t_0 -\frac{\lcck}{24}} \right).
\end{equation}
A computation using a PBW basis for $\Pi$ shows that
\begin{equation} \label{eq:minusonelatticemodcharac}
    \charac{\halflatticemod{-1}{\lambda}}{z}{q} = \frac{z^{\lambda - \ell_n(\kk)}}{\eta(q)^2} \sum_{i \in \ZZ} z^i,
\end{equation}
which also gives the characters of $\halflatticemod{r}{\lambda}$ for all $r \in \frac{1}{2}\ZZ$ as
\begin{equation} \label{eq:rlatticemodcharac}
\begin{aligned}
    \charac{\halflatticemod{r}{\lambda}}{z}{q} &= \text{tr}_{\rho^{r+1}\left( \halflatticemod{-1}{\lambda} \right)} \left(z^{b_0} q^{t_0 -\frac{\lcck}{24}} \right)\\
    &= \text{tr}_{\halflatticemod{-1}{\lambda}} \left(z^{\rho^{-r-1}(b_0)} q^{\rho^{-r-1}(t_0) -\frac{\lcck}{24}} \right) \\
    &= z^{(r+1)\ell_n(\kk)}q^{(r+1)(r+2-n)\frac{\ell_n(\kk)}{2}}\charac{\halflatticemod{-1}{\lambda}}{z q^{r+1}}{q}.
\end{aligned}
\end{equation}

\section{Inverse quantum hamiltonian reduction} \label{sec:invemb}
An embedding $\uSubkk \hookrightarrow \uRegkk \otimes \Pi$ for $n=1$ and $2$ can be obtained by fairly direct methods; In these cases, the operator product expansions are known on both sides and are straightforward to work with. The desired embedding can be obtained by making some reasonable assumptions (the embedding is conformal, the Heisenberg field gets mapped to a linear combination of $c(z)$ and $d(z)$, \dots) and imposing the operator product expansions of $\uSubkk$. 

Once this is done, the work of showing that the image of $\uSubkk$ is actually $\uSubkk$ and not some quotient remains. In both the $n=1$ and $2$ cases, this can be shown directly using suitable bases of the vertex algebras involved.

When $n>2$, the complexity of the operator product expansions makes this approach exceedingly difficult. Therefore to prove the existence of this embedding in general, we need some more information about $\uSubkk$. This information takes the form of a free-field realisation of $\uSubkk$ obtained as the kernel of certain screening operators \cite{Gen17}.

An additional benefit of the free-field approach is that the map $\uSubkk \rightarrow \uRegkk \otimes \Pi$ we get is automatically injective. This is because the map is a composition of the free-field realisation of $\uSubkk$ (which is injective \cite{Gen17}), the FMS bosonisation of the $\beta \gamma$ ghost system vertex algebra (which is injective \cite{FMS86}) and a vertex algebra isomorphism.

\subsection{An embedding} \label{subsec:embedding} 

Let $\VOA{B}$ be the $\beta \gamma$ ghost system vertex algebra (also known as the Weyl vertex algebra). This vertex algebra is strongly generated by two bosonic fields $\beta(z)$ and $\gamma(z)$ with operator product expansions
\begin{equation}
    \beta(z) \beta(w) \sim 0 \sim \gamma(z) \gamma(w), \quad \beta(z) \gamma(w) \sim \frac{\mathbbm{-1}}{z-w}.
\end{equation} 
 To minimise notational clutter, we will suppress the tensor product symbol when it is clear on which vertex algebra the involved fields are acting on. We will also suppress tensor products involving vacuum fields when possible.
\begin{proposition}[Theorem 3.2 \cite{CreGen21}] \label{prop:ffSub}
Let $\kk \neq -n-1$. The subregular W-algebra $\uSubkk$ embeds into $\VOA{H}_\alpha \otimes \VOA{B}$ and the image of this embedding for generic $\kk$ is:
\begin{equation} \label{eq:ffSub}
    \uSubkk \cong \left( \text{ \emph{ker}} \int \beta(z) \ee^{\frac{-1}{\kk+n+1}\alpha_1}(z) \ dz \right) \cap \left( \bigcap_{i=2}^n \text{ \emph{ker}} \int \ee^{\frac{-1}{\kk+n+1}\alpha_i}(z) \ dz \right) \subset \VOA{H}_\alpha \otimes \VOA{B}.
\end{equation}

\end{proposition}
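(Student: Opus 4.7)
The plan is to establish the embedding by combining a Wakimoto realization of $\uaffvoa{\kk}{\slnpone}$ with quantum hamiltonian reduction for the subregular nilpotent, directly generalizing the free-field realization of $\uRegkk$ recorded in \eqref{eq:ffReg}. The source of the single surviving $\beta\gamma$ system $\VOA{B}$ in the statement is the asymmetry of the subregular nilpotent $\fsub = e_{-\alpha_2} + \dots + e_{-\alpha_n}$: the simple root $\alpha_1$ is excluded and, in the associated Dynkin grading, it is treated differently from the other simple roots.

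First, I would construct a Wakimoto-type realization of $\uaffvoa{\kk}{\slnpone}$ into a tensor product of $\beta\gamma$ systems—one per positive root—and the Heisenberg algebra $\VOA{H}_\alpha$, with image cut out by the screenings $\int \ee^{-\alpha_i(z)/(\kk+n+1)}\,dz$ for $i = 1,\dots,n$. I would arrange the construction so that a distinguished $\beta\gamma$ system corresponds specifically to $\alpha_1$. Next, I would form the BRST complex for the subregular Drinfeld--Sokolov reduction, coupling this Wakimoto realization to ghost pairs indexed by the positive roots of strictly positive degree under the subregular Dynkin grading. A standard vanishing theorem identifies $\uSubkk$ with the zeroth BRST cohomology of this complex.

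Second, I would compute the cohomology via a filtration under which, at the $E_1$ page, all $\beta\gamma$ systems except the one attached to $\alpha_1$ cancel against ghost pairs through Koszul-type acyclic subcomplexes. The $\beta\gamma$ system of $\alpha_1$ has no ghost partner and therefore survives as the factor $\VOA{B}$, leaving $\VOA{H}_\alpha \otimes \VOA{B}$ at $E_1$. The original screenings $\int \ee^{-\alpha_i(z)/(\kk+n+1)}\,dz$ for $i = 2,\dots,n$ descend unchanged, while the screening for $\alpha_1$—to remain BRST-closed on the reduced complex—acquires a $\beta(z)$ prefactor and becomes $\int \beta(z)\ee^{-\alpha_1(z)/(\kk+n+1)}\,dz$. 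Injectivity of the resulting map from $\uSubkk$ into $\VOA{H}_\alpha \otimes \VOA{B}$ follows from injectivity of the underlying Wakimoto realization, which passes through to the cohomology.

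The main obstacle is to show that the image is exactly the intersection of the screening kernels, rather than merely contained in it. The inclusion $\subseteq$ is essentially automatic from the BRST construction, since the screenings commute with the BRST differential by design. For the reverse inclusion, I would argue via characters: the graded character of $\uSubkk$ is determined by a \pbw-type basis in the strong generators \eqref{eq:stronggens}, while the character of the intersection of screening kernels can be computed from a Felder-type complex built out of the screening operators acting on $\VOA{H}_\alpha \otimes \VOA{B}$. Matching the two, together with the injectivity established above, would complete the identification.
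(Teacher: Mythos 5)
The paper does not prove this proposition: it is imported verbatim as Theorem 3.2 of \cite{CreGen21} (building on \cite{Gen17}), so there is no in-paper argument to compare yours against. Your outline does follow the strategy of those sources at a high level --- Wakimoto realisation of $\uaffvoa{\kk}{\slnpone}$, BRST reduction for $\fsub$ with cancellation of all but one $\beta\gamma$ pair, and descent of the screenings --- so the overall architecture is sound. One small correction: the grading that makes exactly the $\alpha_1$ system survive is a good \emph{even} grading with $\alpha_1$ in degree $0$ and $\alpha_2,\dots,\alpha_n$ in degree $1$, not the Dynkin grading of $\fsub$ (which is odd for the partition $(n,1)$ when $n$ is even); with that choice $\alg{g}_0$ contains an $\sltwo$ whose Wakimoto realisation produces both the surviving $\VOA{B}$ and the $\beta(z)$ prefactor on the $\alpha_1$ screening, which is a cleaner derivation than appealing to BRST-closedness.

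The genuine gap is in your final step, the reverse inclusion. A character-matching argument via a Felder-type complex only identifies the image with the joint kernel for \emph{generic} $\kk$, where the relevant complexes are exact and the graded dimensions of the kernel vary as expected; the proposition is asserted for \emph{all} $\kk\neq -n-1$, and at special non-critical levels the joint kernel of the screening operators could a priori be strictly larger than the image (the dimension of a kernel is only upper semicontinuous in $\kk$). You cannot close this by "matching characters" without an independent computation of the kernel's character at every level. The arguments in the literature handle this by working over $\CC[\kk]$ and using injectivity of the Miura map at all non-critical levels together with a flatness/deformation argument (or cohomology vanishing for the reduction complex), and any complete proof needs some such input at non-generic $\kk$.
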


\noindent As for regular W-algebras, a similar vertex algebra embedding exists for critical $\kk$. The free-field realisation \eqref{eq:ffReg} of $\uRegkk \hookrightarrow \VOA{H}_\alpha$ has the same screening operators as those for $\uSubkk$ except for the one involving the $\alpha_1(z)$ vertex operator. To be able to recognise the $\uRegkk$ screening operators in \eqref{eq:ffSub}, we therefore need to deal with the $\beta(z)$ appearing in the first screening operator. 

It is conceivable that there is some chain complex involving $\uSubkk$ with associated differential whereupon taking the zeroth homology has the effect of setting $\beta(z) = 1$, reminiscent of the usual quantum hamiltonian reduction of affine vertex algebras \cite{KacQua03}. Indeed this is exactly what one would expect a partial quantum hamiltonian reduction from $\uSubkk$ to $\uRegkk$ to do. Our interest here is the `inverse' of this as-of-yet undefined partial reduction.

We would like to tackle the $\beta(z)$ factor by `absorbing' it into the vertex operator it sits next to in the free-field-realisation of $\uSubkk$. In order to do this, we use the \emph{Friedan-Martinec-Shenker (FMS) bosonisation} of $\VOA{B}$ \cite{FMS86}.
\begin{proposition} \label{prop:FMS}
The vertex algebra homomorphism $\varphi:\VOA{B} \rightarrow \Pi$ defined by
 \begin{equation}
    \beta(z) \mapsto \ee^{c}(z), \ 
    \gamma(z) \mapsto \frac{1}{2}\no{\left(c(z)+d(z)\right) \ee^{-c}(z)}
\end{equation}
is an embedding whose image is specified by
\begin{equation} \label{eq:FMSscreening}
    \VOA{B} \cong \text{ \emph{ker}} \int \ee^{\frac{1}{2}c + \frac{1}{2}d}(z) \ dz.
\end{equation}
\end{proposition}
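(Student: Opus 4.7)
The proposition splits into two independent claims: that $\varphi$ is a well-defined vertex algebra homomorphism (indeed embedding), and that its image is cut out by the screening operator $S = \left(\ee^{\frac{1}{2}(c+d)}\right)_{(0)}$ inside $\Pi$. Both claims are classical features of FMS bosonisation, so the strategy is direct OPE computation in the lattice vertex algebra $\Pi$, supplemented by a grading argument at the end. Injectivity is automatic once $\varphi$ is a homomorphism, since the target is a lattice vertex algebra and the composition of $\varphi$ with the standard FMS embedding is the identity on generators.

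First, I would verify that $\varphi$ respects the defining OPEs of $\VOA{B}$ using \eqref{eq:ope:halflattice} and Wick's theorem. The relation $\varphi(\beta)(z)\varphi(\beta)(w) \sim 0$ is immediate from $\inner{c}{c}=0$, giving $\ee^{c(z)}\ee^{c(w)} = (z-w)^{0}\no{\ee^{c(z)+c(w)}}$. For $\varphi(\beta)(z)\varphi(\gamma)(w)$, contract $\ee^{c(z)}$ against $\ee^{-c(w)}$ (which is regular since $\inner{c}{-c}=0$) and against $(c+d)(w)$ (which yields $\inner{c}{c+d}/(z-w) = 2/(z-w)$); after expanding the resulting normal-ordered product one obtains $-\wun/(z-w)$ at the simple pole, matching the $\beta\gamma$ relation. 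The last OPE $\varphi(\gamma)(z)\varphi(\gamma)(w) \sim 0$ is a somewhat longer Wick calculation in which the $(c+d)$--$(c+d)$ contraction and the $\ee^{-c}$--$\ee^{-c}$ contraction conspire against a derivative term, leaving no singular contribution.

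Next, I would prove $\varphi(\VOA{B}) \subseteq \ker S$ by computing the singular part of $\phi(z)$ against each generator, where $\phi = \ee^{\frac{1}{2}(c+d)}$. Using $\inner{\frac{1}{2}(c+d)}{c} = 1$, one gets $\phi(z)\ee^{c(w)} = (z-w)\no{\phi(z)\ee^{c(w)}}$, which is regular, so $S\cdot\varphi(\beta) = 0$. For $\varphi(\gamma)$, the pairing $\inner{\frac{1}{2}(c+d)}{-c} = -1$ produces a simple pole between $\phi(z)$ and $\ee^{-c(w)}$; combined with the Heisenberg contraction of $\phi(z)$ against $(c+d)(w)$ (which contributes at order $(z-w)^{-2}$) and the derivative terms from expanding $\phi(z)$ around $w$, the $(z-w)^{-1}$ coefficient collapses to a total $w$-derivative of $\no{\phi(w)\ee^{-c(w)}}$ up to a cancelling contact term, giving $S\cdot\varphi(\gamma) = 0$.

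For the reverse inclusion $\ker S \subseteq \varphi(\VOA{B})$, the plan is to exploit the $\ZZ$-grading of $\Pi$ by $c$-lattice charge (eigenvalue of $\tfrac{1}{2}d_0$). The screening operator $S$ raises this charge by $\tfrac{1}{2}$, so the kernel decomposes through the grading, and it suffices to show that in each integer-charge sector the kernel of $S$ coincides with the corresponding $\varphi(\VOA{B})$-component. This can be done either by matching graded characters of $\varphi(\VOA{B})$ and $\ker S$ inside $\Pi$ (using the free-field PBW basis generated by modes of $c$, $d$, $\ee^{\pm c}$), or by noting that $\Pi$ factors as $\varphi(\VOA{B}) \otimes \VOA{F}$ for the rank-one Heisenberg vertex algebra $\VOA{F}$ generated by $\frac{1}{2}(c+d)$, whereupon $S = \wun \otimes \phi_{(0)}$ acts trivially only on the $\wun_\VOA{F}$-component. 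The main obstacle is this last step: all three candidate arguments (character matching, explicit factorisation, or induction on PBW filtration) require a careful check that the Heisenberg partner of $\varphi(\VOA{B})$ inside $\Pi$ is faithfully detected by $\phi_{(0)}$. The OPE computations in the first two steps, while tedious, are purely mechanical and present no conceptual difficulty.
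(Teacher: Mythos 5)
The paper does not actually prove this proposition: it is quoted as the Friedan--Martinec--Shenker bosonisation with a citation to \cite{FMS86}, so there is no in-paper argument to compare yours against. Your plan for the homomorphism property and for the inclusion $\varphi(\VOA{B}) \subseteq \ker \int \ee^{\frac12 c(z)+\frac12 d(z)}\,dz$ is correct and the computations are indeed mechanical. Two small remarks there: injectivity is most cleanly obtained from the simplicity of $\VOA{B}$ (any nonzero vertex algebra homomorphism out of a simple vertex algebra is injective); your stated reason, that composing $\varphi$ with ``the standard FMS embedding'' gives the identity, is circular, since $\varphi$ \emph{is} that embedding. Also, for $\varphi(\gamma)$ to lie in the kernel you need the coefficient of $(z-w)^{-1}$ in $\phi(z)\varphi(\gamma)(w)$, $\phi=\ee^{\frac12(c+d)}$, to vanish identically, not merely to be a total $w$-derivative; it does vanish, because the term coming from Taylor-expanding $\no{\phi(z)\ee^{-c(w)}}$ against the double pole exactly cancels the normally ordered term carrying $\tfrac12(c+d)(w)$.

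The genuine gap is the reverse inclusion $\ker\phi_{(0)} \subseteq \varphi(\VOA{B})$, which is the real content of the statement, and your proposal does not close it. Of the three routes you float, the factorisation $\Pi \cong \varphi(\VOA{B}) \otimes \VOA{F}$ with $\VOA{F}$ the Heisenberg algebra of $\tfrac12(c+d)$ is false: $\inner{\tfrac12(c+d)}{c} = 1 \neq 0$, so $\tfrac12(c+d)(z)$ has a simple pole against $\varphi(\beta)(w) = \ee^{c(w)}$ and cannot generate a commuting tensor factor (indeed the only Heisenberg fields $\lambda c + \mu d$ commuting with $\ee^{\pm c}$ are the multiples of the isotropic $c$, and those fail against $\varphi(\gamma)$ because of the $c$--$d$ pairing). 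The character-matching route can be made to work, but it is not a formality: since $\phi_{(0)}$ maps $\Pi$ into the Fock-type $\Pi$-module generated by $\ee^{\frac12(c+d)}$, one must establish surjectivity of $\phi_{(0)}$ (or compute its cokernel) in each charge-and-weight graded component before comparing with the graded character of $\VOA{B}$; that surjectivity is exactly where the work of the FMS theorem lies, and you flag it as ``the main obstacle'' without resolving it. As written, the proposal proves $\varphi$ is an embedding into $\ker\phi_{(0)}$ but not that the image is all of the kernel.
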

\noindent We can therefore compose the embedding $\uSubkk \hookrightarrow \VOA{H}_\alpha \otimes \VOA{B}$ with FMS bosonisation $\varphi$ tensored with the identity map $\id_{\VOA{H}_\alpha}$ to obtain an embedding $\uSubkk \hookrightarrow \VOA{H}_\alpha \otimes \Pi$. To describe this embedding using screening operators, let $S(z)$ be one of the screening fields in \eqref{eq:ffSub}. Let $F_S$ be the Fock space of $\VOA{H}_\alpha$ mapped to by $\int S(z) \ dz$. That is,
\begin{equation}
    \int S(z) \ dz: \VOA{H}_\alpha \otimes \VOA{B} \rightarrow F_S \otimes \VOA{B}.
\end{equation}
What we would like is a screening operator $\int S'(z) \ dz: \VOA{H}_\alpha \otimes \Pi \rightarrow F_S \otimes \Pi$ such the following diagram commutes:
\begin{equation} \label{eq:diagram}
\begin{tikzpicture}[xscale=2.5,yscale=1.5,baseline=(current  bounding  box.center)]
		\draw[right hook->]
		(0,2)  node[above] {$\VOA{H}_\alpha \otimes \VOA{B}$} -- 
		(0,1) node[below] {$\VOA{H}_\alpha \otimes \Pi$} node[midway, left] {$\id_{\VOA{H}_\alpha}\otimes \ \varphi$};
		\draw[->]
		(0.3,2.2)  -- 
		(2,2.2) node[right] {$F_S \otimes \VOA{B}$} node[midway, above] {$\int S(z) \ dz$};
		\draw[dashed,->]
		(0.3,0.8)  -- 
		(2,0.8) node[right] {$F_{S} \otimes \Pi$} node[midway, above] {$\int S'(z) \ dz$};
		\draw[right hook->]
		(2.3,2)  -- 
		(2.3,1)  node[midway, right] {$\id_{F_S}\otimes \ \varphi$};
\end{tikzpicture}
\end{equation}
If the screening operator $\int S'(z) \ dz$ exists, it can be shown that
\begin{equation}
    \id_{\VOA{H}_\alpha}\otimes \ \varphi \left( \text{ker}\left(\int S(z) \ dz\right) \right) = 
    \text{ker}\left(\int S'(z) \ dz \right) \cap  \text{im}\left( \id_{\VOA{H}_\alpha}\otimes \ \varphi \right).
\end{equation}
For $S(z) = \ee^{-1/(\kk+n+1)\alpha_i}(z)$ with $i = 2,\dots, n$, we can take $S'(z) = S(z)$ as $S(z)$ doesn't act on $\VOA{B}$. For $S(z) = \beta(z)\ee^{-1/(\kk+n+1)\alpha_1}(z)$, since all arrows in \eqref{eq:diagram} are vertex algebra homomorphisms it is sufficient to replace $\beta(z)$ with its image under $\varphi$ to obtain $S'(z) = \ee^{c}(z)\ee^{-1/(\kk+n+1)\alpha_1}(z)$. Therefore the image of the embedding $\uSubkk \hookrightarrow \VOA{H}_\alpha \otimes \Pi$ is specified by
\begin{equation} \label{eq: subreglatticeemb}
\begin{aligned}
    \uSubkk 
    &\cong \left( \text{ ker} \int \ee^{c}(z) \ee^{\frac{-1}{\kk+n+1}\alpha_1}(z) \ dz \right) \cap \left( \bigcap_{i=2}^n \text{ ker} \int \ee^{\frac{-1}{\kk+n+1}\alpha_i}(z) \ dz  \right) \cap \text{im}\left( \id_{\VOA{H}_\alpha}\otimes \ \varphi \right)\\
    &\cong \left( \bigcap_{i=1}^n \text{ ker} \int \ee^{\frac{-1}{\kk+n+1}\tilde{\alpha_i}}(z) \ dz  \right) \cap \left( \text{ ker} \int \ee^{\frac{1}{2}c + \frac{1}{2}d}(z) \ dz \right),
\end{aligned}
\end{equation}
where $\tilde{\alpha}_1 = \alpha_1 - (\kk+n+1)c$ and $\tilde{\alpha}_i = \alpha_i$ otherwise. As the conformal structure on $\Pi$ defined by $\frac{1}{2}\no{c(z)d(z)}$ gives $\ee^{c}(z)$ conformal dimension zero and $\ee^{-1/(\kk+n+1)\alpha_1}(z)$ has conformal dimension one with respect to $T(z)$, $\ee^{-1/(\kk+n+1)\tilde{\alpha_1}}(z)$ is a screening field on $\VOA{H}_\alpha \otimes \Pi$.

The embedding $\uSubkk \hookrightarrow \VOA{H}_\alpha \otimes \Pi$ defined by \eqref{eq: subreglatticeemb} was first considered by Creutzig, Genra and Nakatsuka where it is used to analyse the coset of $\uSubkk$ by the Heisenberg vertex subalgebra generated by $J(z)$ \cite{CreGen21}. 

The first $n$ screening operators in \eqref{eq: subreglatticeemb} superficially appear to be the screening operators for $\uRegkk$. The difference between these sets of operators is that the $\tilde{\alpha_i}$ screening operators act on both $\VOA{H}_\alpha$ and $\Pi$ non-trivially while the $\alpha_i$ screening operators only act on $\VOA{H}_\alpha$. So the corresponding kernels of vertex operators need not agree. 

To decouple the $\tilde{\alpha}_i$ fields from the rest of $\VOA{H}_\alpha \otimes \Pi$, let $\VOA{H}_{\tilde{\alpha}} \subset \VOA{H}_\alpha \otimes \Pi$ be the vertex subalgebra generated by $\tilde{\alpha}_1(z), \dots, \tilde{\alpha}_n(z)$. It is easy to see that $\VOA{H}_{\tilde{\alpha}} \cong \VOA{H}_{\alpha}$. Let $\tilde{\Pi}$ be the vertex subalgebra of $\VOA{H}_\alpha \otimes \Pi$ generated by
\begin{equation} \label{eq:changeofbasis}
\tilde{c}(z) = c(z), \quad \ee^{m\tilde{c}}(z) = \ee^{mc}(z), \quad \tilde{d}(z) = d(z) - \frac{n}{n+1}(\kk+n+1) c(z) + 2\omega_1(z),
\end{equation}
where $\omega_1(z) = \frac{1}{n+1}\sum_{i=1}^n (n-i+1) \alpha_i(z)$. That is,  $\omega_1(z)$ is the field associated to the first fundamental coweight of $\slnpone$ which is simply laced. As before it is easy to show that $\tilde{\Pi}\cong \Pi$. 

A direct computation shows that the operator product expansion $A(z) B(w)$ of any fields $A(z) \in \VOA{H}_{\tilde{\alpha}}$ and $B(z) \in \tilde{\Pi}$ is nonsingular. Moreover, the expressions in \eqref{eq:changeofbasis} along with those for $\tilde{\alpha}_i(z)$ can be inverted to express the strong generators of  $\VOA{H}_\alpha \otimes \Pi$ in terms of linear combinations of fields in $\VOA{H}_{\tilde{\alpha}}$ and $\tilde{\Pi}$. Therefore $\VOA{H}_{\tilde{\alpha}} \otimes \tilde{\Pi} = \VOA{H}_\alpha \otimes \Pi$. 

By performing this `change of basis', we see that the $\tilde{\alpha_i}$ screening operators are the screening operators for $\uRegkk$ with respect to the Heisenberg vertex algebra $\VOA{H}_{\tilde{\alpha}}$. This leads neatly to the following result.

\begin{theorem} \label{thm:invemb}
Let $\kk$ be generic. There exists an embedding $\uSubkk \hookrightarrow \uRegkk \otimes \Pi$ with
\begin{equation} \label{eq:finalscreening}
    \uSubkk \cong \text{ \emph{ker}} \int  \ee^{a - \omega_1}(z)  \ dz,
\end{equation}
where $\ee^{-\omega_1}(z)$ acts on fields in $\uRegkk$ by way of the strong generators given in the Miura basis \eqref{eq: miurabasis}.
\end{theorem}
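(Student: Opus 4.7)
The plan is to chain together three embeddings: the free-field realisation of $\uSubkk$ from Proposition~\ref{prop:ffSub}, the FMS bosonisation of Proposition~\ref{prop:FMS}, and a linear change of generators in $\VOA{H}_\alpha \otimes \Pi$ that exposes $\uRegkk$ as a tensor factor. Since each step is injective, the composite is automatically an embedding, so the real content of the theorem lies in the explicit kernel description.

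First, I would compose the embedding $\uSubkk \hookrightarrow \VOA{H}_\alpha \otimes \VOA{B}$ with $\id_{\VOA{H}_\alpha} \otimes \varphi$. Because $\varphi(\beta(z)) = \ee^{c(z)}$ and $\varphi$ identifies $\VOA{B}$ with $\text{ker}\int \ee^{(c+d)/2}\,dz$, this yields an embedding $\uSubkk \hookrightarrow \VOA{H}_\alpha \otimes \Pi$ described by \eqref{eq: subreglatticeemb}. Next I would perform the change of generators \eqref{eq:changeofbasis}. A direct OPE check shows that $\tilde\alpha_1, \dots, \tilde\alpha_n$ span a Heisenberg subalgebra $\VOA{H}_{\tilde\alpha} \cong \VOA{H}_\alpha$, that $\tilde c, \tilde d, \ee^{m\tilde c}$ span a half-lattice subalgebra $\tilde\Pi \cong \Pi$, and that the two commute across the tensor product, so $\VOA{H}_\alpha \otimes \Pi = \VOA{H}_{\tilde\alpha} \otimes \tilde\Pi$. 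Since $-\tilde\alpha_1/(\kk+n+1) = c - \alpha_1/(\kk+n+1)$ matches the first screening exponent in \eqref{eq: subreglatticeemb} exactly, the first $n$ screenings reorganise into the Miura screenings of $\uRegkk \hookrightarrow \VOA{H}_{\tilde\alpha}$. Their joint kernel in $\VOA{H}_{\tilde\alpha}\otimes\tilde\Pi$ is $\uRegkk \otimes \tilde\Pi \cong \uRegkk \otimes \Pi$, which establishes the embedding $\uSubkk \hookrightarrow \uRegkk \otimes \Pi$.

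The remaining step, which is the main technical hurdle, is to identify the residual FMS screening on $\uRegkk\otimes\Pi$ with $\int \ee^{a-\omega_1}\,dz$. A direct calculation using \eqref{eq:abbasis} and \eqref{eq:changeofbasis} gives the decomposition $\tfrac{1}{2}(c+d) = (a-\omega_1) + \tfrac{n(\kk+n+1)}{2(n+1)}c + \omega_1$, where $a$ and $-\omega_1$ lie in orthogonal tensor factors. The norm $\langle a-\omega_1, a-\omega_1\rangle = -\ell_n(\kk) + \tfrac{n(\kk+n+1)}{n+1} = 1$, together with the background-charge contributions from the conformal vectors $t$ on $\Pi$ and the Miura $T$ on $\uRegkk$, ensures that $\ee^{a-\omega_1}$ has conformal dimension one and hence defines a bona fide screening field on $\uRegkk\otimes\Pi$. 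What must then be checked is that the discrepancy $\tfrac{n(\kk+n+1)}{2(n+1)}c + \omega_1$ between the two exponents contributes only a factor whose zero mode acts trivially on the subalgebra $\uRegkk\otimes\Pi$, so that the FMS screening and $\int \ee^{a-\omega_1}\,dz$ cut out the same subalgebra. Once this matching of zero modes is verified, the kernel description \eqref{eq:finalscreening} follows.
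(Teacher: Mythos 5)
Your first two paragraphs follow the paper's proof almost exactly: compose the free-field realisation of Proposition \ref{prop:ffSub} with FMS bosonisation, perform the change of generators \eqref{eq:changeofbasis}, observe that $\VOA{H}_{\tilde{\alpha}}$ and $\tilde{\Pi}$ commute and jointly exhaust $\VOA{H}_\alpha \otimes \Pi$, and recognise the first $n$ screenings as the Miura screenings for $\uRegkk \hookrightarrow \VOA{H}_{\tilde{\alpha}}$. (The paper is slightly more careful at the step "their joint kernel is $\uRegkk \otimes \tilde{\Pi}$", separating variables via a linear-independence argument on the $\tilde{B}_m$, but your assertion is the same in substance.)

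The final paragraph, however, contains a genuine error. The fields $a$ and $\omega_1$ appearing in \eqref{eq:finalscreening} are the fields of the \emph{abstract} $\Pi$ and $\uRegkk$, which under the identifications $\tilde{\Pi} \cong \Pi$ and $\VOA{H}_{\tilde{\alpha}} \cong \VOA{H}_\alpha$ correspond to the \emph{tilded} combinations $\tilde{a} = -\tfrac{\ell_n(\kk)}{2}\tilde{c} + \tfrac{1}{2}\tilde{d}$ and $\tilde{\omega}_1 = \tfrac{1}{n+1}\sum_i (n-i+1)\tilde{\alpha}_i$. A direct substitution of \eqref{eq:changeofbasis} shows that
\begin{equation*}
  \tfrac{1}{2}c(z) + \tfrac{1}{2}d(z) \;=\; \tilde{a}(z) - \tilde{\omega}_1(z)
\end{equation*}
\emph{exactly}, so there is no discrepancy to dispose of; this is the whole content of the paper's last step. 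By instead expanding against the untilded $a$ and $\omega_1$ you obtain $\tfrac{1}{2}(c+d) = (a-\omega_1) + \tfrac{n(\kk+n+1)}{2(n+1)}c + \omega_1$, which is vacuously true (the $\omega_1$'s cancel, leaving $\tfrac12(c+d) = a + \tfrac{n(\kk+n+1)}{2(n+1)}c$) and therefore cannot isolate $a - \omega_1$ as the effective screening exponent. Worse, the proposed repair -- showing that the leftover $\tfrac{n(\kk+n+1)}{2(n+1)}c + \omega_1$ "acts trivially" on $\uRegkk \otimes \Pi$ -- fails: its pairing with $\tilde{d} = d - \tfrac{n}{n+1}(\kk+n+1)c + 2\omega_1 \in \tilde{\Pi}$ equals $\tfrac{n(\kk+n+1)}{n+1} + 2\langle \omega_1,\omega_1\rangle = \tfrac{3n(\kk+n+1)}{n+1} \neq 0$ for noncritical $\kk$, so replacing the FMS exponent by the untilded $a - \omega_1$ genuinely changes the screening's action on the $\Pi$ factor. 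The fix is simply to do the bookkeeping in the tilded basis, as the paper does.
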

\begin{proof}
By \eqref{eq: subreglatticeemb}, the fields of $\uSubkk$ must be of the form $F(z) = \sum_m A_m(z) \otimes B_m(z)$ for some fields $A_m(z) \in \VOA{H}_\alpha$ and $B_m(z) \in \Pi$. By the above discussion, we can also write $F(z) = \sum_m \tilde{A}_m(z) \tilde{B}_m(z)$ for some fields $\tilde{A}_m (z) \in \VOA{H}_{\tilde{\alpha}}$, $\tilde{B}_m(z) \in \tilde{\Pi}$ and inserting tensor products when necessary. 

For convenience, suppose that the fields $\tilde{B}_m(z)$ are linearly independent. If they are not we can simply redefine the fields $\tilde{A}_m(z)$ and reduce the range of $m$ such that this is the case. Since $\tilde{\alpha}_i(z) \tilde{B}_m(w) \sim 0$ for all $i \in \{1,\dots, n\}$ and $m$, $F(z)$ satisfying
\begin{equation} 
    \int \ee^{\frac{-1}{\kk+n+1}\tilde{\alpha_i}}(z) F(w) \ dz = 0
\end{equation}
for all $i \in \{1,\dots, n\}$ is equivalent to
\begin{equation} \label{eq:findingreg}
    \int \ee^{\frac{-1}{\kk+n+1}\tilde{\alpha_i}}(z) \tilde{A}_m(w) \ dz = 0
\end{equation}
for all $i \in \{1,\dots, n\}$ and $m$ by the linear-independence of the fields $\tilde{B}_m(z)$. Therefore if $F(z) \in \uSubkk$,
\begin{equation}
     \tilde{A}_m(w) \in \bigcap_{i=1}^n \text{ker} \int  \ee^{\frac{-1}{\kk+n+1}\tilde{\alpha_i}}(z)  \ dz  \quad \text{for all } m.
\end{equation}
As $\tilde{A}_m(z) \in \VOA{H}_{\tilde{\alpha}}$, this means that $\tilde{A}_m(z) \in \uRegkk$ by \eqref{eq:ffReg}. More precisely, $\tilde{A}_m(z)$ is a normally ordered product of the fields $\tilde{\alpha}_i(z)$ and their derivatives. Imposing \eqref{eq:findingreg} for $i = 1, \dots, n$ on $\tilde{A}_m(z)$ constrains this expansion to be a normally ordered expansion of the fields \eqref{eq: miurabasis} (replacing $\alpha_i$ with $\tilde{\alpha}_i$) and their derivatives, which strongly generate a vertex operator algebra isomorphic to $\uRegkk$. 

Hence we may treat the fields $\tilde{A}_m(z)$ as fields in $\uRegkk$. The fields $\tilde{B}_m(z) \in \tilde{\Pi}$ are unaffected by the $\tilde{\alpha}_i$ screening operators and are therefore unconstrained up to this point. The screening operator from FMS bosonisation \eqref{eq:FMSscreening} present in \eqref{eq: subreglatticeemb} therefore dictates how the fields $\tilde{A}_m(z) \in \uRegkk$ and $\tilde{B}_m(z) \in \tilde{\Pi}$ are combined to form a field in $\uSubkk$. 

The FMS bosonisation screening field $\ee^{\frac{1}{2}c + \frac{1}{2}d}(z)$ can be written in terms of the tilded fields using \eqref{eq:changeofbasis} and the definition of $\VOA{H}_{\tilde{\alpha}}$, and its exponent becomes
\begin{equation}
    \frac{1}{2}c(z) + \frac{1}{2}d(z) =  \left( -\frac{\ell_n(\kk)}{2} \tilde{c}(z) + \frac{1}{2}\tilde{d}(z)  \right)  - \frac{1}{n+1}\sum_{i=1}^n (n-i+1)\tilde{\alpha}_i(z).
\end{equation}
The corresponding map $\uSubkk \rightarrow \uRegkk \otimes \tilde{\Pi}$ is an embedding since its image is equal to the image of the embedding $\uSubkk \hookrightarrow \VOA{H}_\alpha \otimes \Pi$ defined by \eqref{eq: subreglatticeemb}, treating $\uRegkk$ as a vertex subalgebra of $\VOA{H}_{\tilde{\alpha}} \subset \VOA{H}_\alpha \otimes \Pi$. The desired result and screening field description \eqref{eq:finalscreening} follow from the isomorphisms $\VOA{H}_{\tilde{\alpha}} \cong \VOA{H}_\alpha$ and $\tilde{\Pi} \cong \Pi$. 
\end{proof}

\subsection{Explicit expressions} \label{subsec:explicit}

While Theorem \ref{thm:invemb} proves the existence of an embedding $\uSubkk \hookrightarrow \uRegkk \otimes \Pi$ at generic level and gives an associated screening operator, one might also want expressions for the strongly-generating fields \eqref{eq:stronggens} in terms of fields from $\uRegkk$ and $\Pi$. Some fields of $\uSubkk$ can be readily extracted using the screening operator \eqref{eq:finalscreening}. For example, since $ \inner{a}{b} = 0$
\begin{equation}
    b(z) \in \text{ ker} \int  \ee^{a - \omega_1}(z)  \ dz.
\end{equation}
Call the corresponding $\uSubkk$ field $J(z)$. Similarly, the field $\ee^{c}(z)$ has nonsingular operator product expansion with the screening field so we can treat it as a field $G^+(z)$ in  $\uSubkk$. The reason for these assignments is that they reproduce the relevant operator product expansions in \eqref{eq:subopes}. Similarly, the energy-momentum field $L(z)$ of $\uSubkk$ can be decomposed according to
\begin{equation}
    L(z) = T(z) + t(z),
\end{equation}
as the operator product expansion of the right-hand-side with the screening operator in \eqref{eq:finalscreening} is a total derivative. In particular, this shows that the embedding $\uSubkk \hookrightarrow \uRegkk \otimes \Pi$ is conformal (with the chosen conformal structure on $\Pi$). 

To find similar expressions for the remaining strong generators of $\uSubkk$, we use explicit expressions for the aforementioned strong generators of $\uSubkk$ \cite{GenStrong20}. These expressions rely on the fact that the subregular W-algebra of type A $\uSubkk$ is isomorphic to the Feigin--Semikhatov algebra $\FS{n+1}$ \cite[Sec. 6.2]{Gen17} and screening operators for the latter are known \cite{FS04}. 

The screening operators of $\uSubkk$ from \cite{FS04} act on a vertex algebra $\mathcal{H}^\kk$ generated by fields $A_1(z), \dots, A_n(z), Q(z), Y(z)$ along with vertex operators $\ee^{m Y}(z)$ where $m \in \ZZ$. To define the operator product expansions of these fields, let $\mathbb{V} = \CC A_n \oplus \dots \oplus \CC A_1 \oplus \CC Q \oplus \CC Y$ and define a symmetric bilinear form $( \cdot, \cdot)$ on $\mathbb{V}$ by
\begin{equation}
    (A_i, A_j) = A_{i,j}(\kk+n+1), \quad (A_1, Q) = -(\kk+n+1), \quad (Q,Q) = 1, \quad (Q,Y) = 1,
\end{equation}
with all omitted evaluations yielding zero. The operator product expansions of $\mathcal{H}^\kk$ can be written as, for all  $A,B \in \mathbb{V}$,
\begin{equation}
    A(z) B(w) \sim \frac{(A,B)}{(z-w)^2}, \quad \ee^{mY}(z) \ee^{nY}(w) \sim 0, \quad A(z) \ee^{mY}(w) \sim \frac{(A,mY)\ee^{mY}(w)}{z-w}..   
\end{equation}
In addition to $\ee^{mY}(z)$, we also have vertex operators $\ee^{C}(z)$ where $C \in \{A_n, \dots, A_1, Q\}$ with operator product expansions
\begin{equation}
    A(z)\ee^{C}(w) \sim \frac{(A,C)\ee^{C}(w)}{z-w}, \quad e^{A_i}(z) \ee^{mY}(w) \sim 0, \quad e^{Q}(z) \ee^{mY}(w) \sim (z-w)^m \ee^{mY+Q}(w) + \dots.
\end{equation}
Proposition 2.1 in \cite{GenStrong20} defines an embedding $\uSubkk \hookrightarrow \mathcal{H}^\kk$ at generic level with
\begin{equation} \label{eq:genrascreening}
    \uSubkk \cong \left( \text{ ker} \int \ee^{Q}(z) \ dz \right) \cap \left( \bigcap_{i=1}^n \int \ee^{A_i}(z) \ dz   \right).
\end{equation}
Expressions for the strong generators $L$, $G^+$, $J$, $U_3$, \dots, $U_n$, $G^-$ in terms of the fields in $\mathcal{H}^\kk$ can then be written down explicitly for all $\kk$ \cite{GenStrong20}. So we have two different free-field realisations of $\uSubkk$, one of which has been analysed further to obtain expressions for strong generators of $\uSubkk$. In order to use the $\mathcal{H}^\kk$ expressions in our present setting of $\VOA{H}_\alpha \otimes \Pi$, we must understand how $\mathcal{H}^\kk$ and \eqref{eq:genrascreening} relate to $\VOA{H}_\alpha \otimes \Pi$ and \eqref{eq: subreglatticeemb}.

\begin{proposition} \label{prop:freefieldiso}
Define $\psi: \mathcal{H}^\kk \rightarrow \VOA{H}_{\alpha} \otimes \Pi$ to be the vertex algebra map defined by
\begin{equation}
    A_i(z) \mapsto \alpha_i(z), \quad Y(z) \mapsto c(z), \quad \ee^{mY}(z) \mapsto \ee^{mc}(z), \quad Q(z) \mapsto a(z) - \omega_1(z). 
\end{equation}
Then $\psi$ is a vertex algebra isomorphism.
\end{proposition}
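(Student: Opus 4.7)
The plan is to verify $\psi$ is a well-defined vertex algebra homomorphism by checking it respects all OPEs, and then to establish bijectivity via surjectivity on strong generators and a PBW-type count. Since all Heisenberg-type generators of $\mathcal{H}^\kk$ map to Heisenberg fields in the codomain (together with the lattice-like image $\ee^{mY} \mapsto \ee^{mc}$), the homomorphism property reduces to matching the bilinear form $(\cdot\,,\cdot)$ on $\mathbb{V}$ against the inner products induced by the Heisenberg OPEs on $\VOA{H}_\alpha$ and $\Pi$. The entries $(A_i, A_j) = A_{i,j}(\kk+n+1)$ are immediate; $(A_1, Q) = -(\kk+n+1)$ follows from $\alpha_1(z)a(w) \sim 0$ together with $\langle \alpha_1, \omega_1\rangle = 1$; and $(Q, Y) = 1$ follows from $\langle a, c\rangle = 1$ combined with $\langle \omega_1, c\rangle = 0$. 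The crucial check is $(Q, Q) = 1$: combining $\langle a, a\rangle = -\ell_n(\kk)$ with $\langle \omega_1, \omega_1\rangle = \frac{n}{n+1}(\kk+n+1)$ (the squared norm of the first fundamental weight of $\slnpone$) and the definition $\ell_n(\kk) = \frac{n\kk}{n+1}+n-1$ yields $-\ell_n(\kk) + \frac{n}{n+1}(\kk+n+1) = 1$, as required. The OPEs involving $\ee^{mY}$ then follow from \eqref{eq:ope:halflattice} together with $(Q, mY) = m$.

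For surjectivity, I would exhibit preimages of each strong generator of $\VOA{H}_\alpha \otimes \Pi$. The fields $\alpha_i$, $c$ and $\ee^{mc}$ are direct images of $A_i$, $Y$ and $\ee^{mY}$, respectively. Since $\omega_1$ is a fixed $\CC$-linear combination of the $\alpha_i$, it lies in $\psi(\mathcal{H}^\kk)$, and hence so does $a = \psi(Q) + \omega_1$. The remaining Heisenberg generators $b$ and $d$ of $\Pi$ are then recovered as $\CC$-linear combinations of $a$ and $c$ via \eqref{eq:abbasis}. Since this exhausts a strong generating set for the codomain, $\psi$ is surjective.

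For injectivity, I would use a PBW-type dimension count. Both $\mathcal{H}^\kk$ and $\VOA{H}_\alpha \otimes \Pi$ are free modules over their respective Heisenberg subalgebras tensored with a rank-one lattice factor; the images of the $n+2$ generators $A_1, \ldots, A_n, Q, Y$ span a subspace of the same dimension as the Heisenberg part of the codomain (linear independence being visible from the nondegeneracy of the bilinear form computed above for noncritical $\kk$), and the assignment $\ee^{mY} \leftrightarrow \ee^{mc}$ gives a bijection on the lattice parts. The induced map on PBW bases is then a bijection.

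The step I expect to require the most care is the $(Q, Q) = 1$ verification, which hinges on correctly identifying $\langle \omega_1, \omega_1\rangle = \frac{n}{n+1}(\kk+n+1)$ and tracking signs against $\ell_n(\kk)$; once this cancellation is in hand, the remaining OPE matches and the bijection on generators are essentially bookkeeping.
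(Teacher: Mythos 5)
The paper states this proposition without proof, treating it as a routine verification, and your argument is exactly the intended one: all the Gram-matrix matches check out (in particular $-\ell_n(\kk) + \tfrac{n}{n+1}(\kk+n+1) = 1$ for $(Q,Q)$, and $\langle\alpha_1,\omega_1\rangle = \kk+n+1$ giving $(A_1,Q)=-(\kk+n+1)$), and surjectivity plus the nondegeneracy of the form for $\kk\neq -n-1$ does yield bijectivity on the Heisenberg and lattice parts. The only blemish is a normalization inconsistency in your write-up — you quote $\langle\alpha_1,\omega_1\rangle = 1$ (Lie-algebra normalization) but $\langle\omega_1,\omega_1\rangle = \tfrac{n}{n+1}(\kk+n+1)$ (field normalization, carrying the $\kk+n+1$ factor); both conclusions are right, but you should use one convention throughout.
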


\noindent As this is an isomorphism, the kernel of the screening operator $\int \ee^{Q}(z) \ dz$ is equal to $\psi^{-1}$ applied to the kernel of the screening operator in \eqref{eq:finalscreening}. The kernel of the screening operator $\int \ee^{A_i(z)} \ dz$ is equal to $\psi^{-1}$ applied to the kernel of $\int \ee^{\alpha_i}(z) \ dz$. By Feigin--Frenkel duality for the Virasoro vertex algebra (see \cite[Ch.~15]{FBZ04}),
\begin{equation}
    \text{ ker} \int \ee^{\alpha_i}(z) \ dz = \text{ ker} \int \ee^{\frac{-1}{\kk+n+1}\alpha_i}(z) \ dz
\end{equation}
for $i = 1,\dots, n$. So the isomorphism $\psi$ maps the intersection of kernels in \eqref{eq:genrascreening} to that in \eqref{eq: subreglatticeemb}. In other words, $\psi$ intertwines the action of the screening operators on $\mathcal{H}^\kk$ and  $\VOA{H}_{\alpha} \otimes \Pi$ that define embeddings of $\uSubkk$. 

To summarise what we have shown thus far, applying $\psi$ to the strong generators of $\uSubkk$ in terms of the fields of $\mathcal{H}^\kk$ presented in \cite{GenStrong20} gives strong generators of $\uSubkk$ in terms of fields of $\VOA{H}_\alpha \otimes \Pi$. The latter set of strong generators also belong to (and therefore strongly generate) the intersection of kernels in \eqref{eq: subreglatticeemb}, and by Theorem \ref{thm:invemb} must consist of fields of $\uRegkk \otimes \Pi$ only, treating $\uRegkk$ as a subalgebra of $\VOA{H}_\alpha$ via the Miura transformation. 

To write down the generators obtained by applying $\psi$ to those in \cite{GenStrong20}, let $\wun_R$ and $\wun_\Pi$ be the vacuum states of $\uRegkk$ and $\Pi$ respectively. Then the vacuum of $\uRegkk \otimes \Pi$ is $\wun = \wun_R \otimes \wun_\Pi$. We will frequently omit tensor product symbols in what follows when it is clear which modes act on which component of $\VOA{H}_\alpha \otimes \Pi$. Define operators $\rho_0, \rho_1 ,\dots, \rho_{n+1}$ on $\VOA{H}_\alpha \otimes \Pi$ by
\begin{equation}
    \rho_0 = (\kk+n) (\partial + c_{-1}), \quad \rho_i = (\kk+n)\partial + b_{-1}  + \frac{\kk+n+1}{n+1}c_{-1} - (\varepsilon_i)_{-1}.
\end{equation}
As $T + t$ is a conformal vector, we may write $\partial = T_{-1}+t_{-1}$. Observe that $\rho_i \ee^{-c} = a_{-1}\ee^{-c} + \left( (\kk+n)T_{-1} -(\varepsilon_i)_{-1} \right)\ee^{-c}$.

\begin{proposition} \label{prop:stronggensubreg}
Let $\kk \neq -n-1$. Define the fields $L(z)$, $G^+(z)$, $J(z)$, $U_3(z)$, \dots, $U_n(z)$, $G^-(z) \in \VOA{H}_\alpha \otimes \Pi$ by
\begin{equation} \label{eq:stronggensubreg}
\begin{gathered}
    L = T + t, \quad J = b, \quad G^+ = \ee^{c}, \\
    G^- = -E_{n+1}(\rho_1, \dots, \rho_{n+1})\ee^{-c} \\
    U_i = \sum_{j=0}^i (-1)^{i+j} \left(\prod_{m=1}^j \frac{m(\kk+n) + 1}{m(\kk+n)}  \right)E_{i-j}(\rho_1, \dots, \rho_{n+1})\rho_0^{j} \wun. 
\end{gathered}
\end{equation}
The associated fields strongly-generate $\uSubkk$ and satisfy the operator product expansions \eqref{eq:subopes}.
\end{proposition}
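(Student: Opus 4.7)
The proof strategy is to translate the explicit formulas for the strong generators of $\uSubkk$ from \cite{GenStrong20} through the isomorphism $\psi: \mathcal{H}^\kk \to \VOA{H}_\alpha \otimes \Pi$ of Proposition \ref{prop:freefieldiso} and verify that the images coincide with the expressions in \eqref{eq:stronggensubreg}. Since $\psi$ is a vertex algebra isomorphism intertwining the screening operators that cut out $\uSubkk$ on both sides (via the identifications \eqref{eq:genrascreening} and \eqref{eq: subreglatticeemb}), it maps strong generators to strong generators and preserves all OPEs. Consequently, the claims that \eqref{eq:stronggensubreg} strongly generates $\uSubkk$ and satisfies \eqref{eq:subopes} both reduce to the purely computational task of matching formulas.

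The three simplest generators $L$, $J$ and $G^+$ were already identified in the paragraph preceding the proposition by direct OPE computations with the screening field $\ee^{a - \omega_1}$: the pairing $\inner{b}{a} = 0$ and the fact that $b$ and $\omega_1$ live in different Heisenberg subalgebras force $J = b$; the pairing $\inner{c}{a} = 1$ yields a regular OPE of $\ee^c$ with the screening field, identifying $G^+ = \ee^c$; and the total-derivative argument for the conformal vector gives $L = T + t$. That these agree with the $\psi$-images of the corresponding fields in \cite{GenStrong20} is immediate from $\psi(Y) = c$, $\psi(Q) = a - \omega_1$ and compatibility of the two conformal structures.

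The substantive content is $G^-$ and the $U_i$. In \cite{GenStrong20} these are expressed as noncommutative elementary symmetric polynomials in certain differential operators built from the modes of the $A_i$ and $Q$, acting on the vacuum or on $\ee^{\pm Y}$. I would apply $\psi$ termwise: using the definitions in Proposition \ref{prop:freefieldiso} and inverting the relation between the $\alpha_i$ and the $\varepsilon_j$ implicit in the Miura presentation \eqref{eq: miurabasis}, the image operators become exactly the $\rho_0, \rho_1, \dots, \rho_{n+1}$ defined in the paragraph preceding the proposition. The noncommutative elementary symmetric polynomial structure is preserved by $\psi$, which yields the closed forms in \eqref{eq:stronggensubreg}.

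The main obstacle is bookkeeping of normalisation constants. The prefactor $\prod_{m=1}^{j} (m(\kk+n)+1)/(m(\kk+n))$ appearing in $U_i$ arises from commuting $\rho_0$ past the $\rho_i$ via the contractions between the $c_{-1}$ term in $\rho_0$ and the $b_{-1}$ and $c_{-1}$ contributions inside each $\rho_i$, and it requires tracking the successive corrections to each power of $\rho_0$. Likewise the alternating signs and the global minus sign in $G^-$ must be pinned down by comparing leading terms; in particular, the most singular pole in $G^+(z) G^-(w)$ must reproduce $\lambda_n(n,\kk)\wun/(z-w)^{n+1}$ from \eqref{eq:GpGmope}, which fixes the overall normalisation of $G^-$. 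Once these constants are matched against \cite{GenStrong20}, containment of \eqref{eq:stronggensubreg} in $\uSubkk \subset \uRegkk \otimes \Pi$ is automatic by Theorem \ref{thm:invemb}, and both strong generation and the remaining OPEs in \eqref{eq:subopes} transfer through the isomorphism $\psi$ from the known case in $\mathcal{H}^\kk$.
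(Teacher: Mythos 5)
Your proposal takes essentially the same route as the paper: the fields are obtained by applying the isomorphism $\psi$ of Proposition \ref{prop:freefieldiso} to the explicit strong generators of \cite{GenStrong20}, so that containment in $\uSubkk$, strong generation and the \opes{} all transfer through $\psi$ and the intertwined screening operators. The one point the paper makes that you pass over is that the generating set in \cite{GenStrong20} contains $U_2$ rather than $L$; the paper closes this by noting $L = a_1 U_2 + a_2 \partial J + a_3 \no{JJ}$ with $a_1 \neq 0$, so that replacing $U_2$ by $L$ still gives a strongly generating set. This is a small but genuine step you should add; otherwise the argument is the same.
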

\begin{proof}
These fields are $\psi$ applied to the strong generators described in Remark 3.14 in \cite[arXiv]{GenStrong20} except for $L(z)$. The relationship between $L(z)$ and the field $U_2(z)$, which is defined using the above formula for $U_i(z)$ is a straightforward calculation and is of the form
\begin{equation}
    L(z) = \frac{1}{\kk+n+1}\left(-U_2(z) + a_1 \partial J(z) + a_2 \no{J(z)J(z)}\right),
\end{equation}
with $a_1, a_2 \in \RR[\kk]$ whose precise form is not important. Since the set $\set{G^+(z)$, $J(z)$, $U_2(z)$, $U_3(z)$, \dots, $U_n(z)$, $G^-(z)}$ strongly-generates $\uSubkk$, the same set with $U_2(z)$ replaced with $L(z)$ also strongly-generates $\uSubkk$.
\end{proof}

\noindent The apparent singularity at $\kk = -n$ in the formula for $U_i$ is taken care of by the factor of $(\kk+n)$ in $\rho_0$. To see how the fields above decompose in terms of $\uRegkk$ fields, notice that the only place where fields of $\VOA{H}_\alpha$ appear (outside of $T$ in $L$) is in the symmetric polynomials involving the operators $\rho_i$: Since the vacuum is translation invariant, $\rho_0 \wun = (\kk+n)c$ and successive applications of $\rho_0$ do not introduce any additional modes/states from $\VOA{H}_\alpha$ as $T_{-1}$ commutes with all modes in $\Pi$ and $T_{-1} \wun = (T_{-1}\wun_R) \otimes \wun_\Pi = 0$. Therefore all that remains is to determine which $\uRegkk$ modes appear in $E_m(\rho_1, \dots, \rho_{n+1})$.

\begin{lemma} \label{lem:sympolys}
Let $ m \in \{1, \dots, n+1 \}$.
\begin{equation} \label{eq:sympolys}
    E_m(\rho_1, \dots, \rho_{n+1}) = \sum_{j=0}^m \binom{n+1-j}{m-j}E_j(\sigma_1, \dots, \sigma_{n+1}) \left( (\kk+n)t_{-1} + b_{-1}  + \frac{\kk+n+1}{n+1}c_{-1}  \right)^{m-j},
\end{equation}
where $\sigma_i = (\kk+n)T_{-1}-(\varepsilon_i)_{-1}$.
\end{lemma}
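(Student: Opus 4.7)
The plan is to reduce this to a general combinatorial identity for noncommutative elementary symmetric polynomials which works whenever one adds a commuting shift to each argument.

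First I would write
\begin{equation*}
X = (\kk+n)t_{-1} + b_{-1} + \tfrac{\kk+n+1}{n+1}c_{-1},
\end{equation*}
so that $\rho_i = \sigma_i + X$ after using the decomposition $\partial = T_{-1} + t_{-1}$ of the translation operator with respect to the tensor product conformal structure $L = T + t$. The crucial observation is that $X$ is built entirely from modes in $\Pi$ while each $\sigma_i = (\kk+n)T_{-1} - (\varepsilon_i)_{-1}$ lives in $\VOA{H}_\alpha$ (more precisely, in the image of $\uRegkk$ via Miura). Hence $[X, \sigma_i] = 0$ for every $i$, even though the $\sigma_i$ do not commute among themselves and $X$ does not commute with itself's summands in any other useful way.

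Next I would prove the following general identity: if $\sigma_1, \dots, \sigma_N$ are arbitrary elements of some associative algebra and $X$ commutes with each $\sigma_i$, then
\begin{equation*}
E_m(\sigma_1 + X, \dots, \sigma_N + X) = \sum_{j=0}^{m} \binom{N-j}{m-j} E_j(\sigma_1, \dots, \sigma_N)\, X^{m-j}.
\end{equation*}
Expanding the definition, $E_m(\sigma_1+X, \dots, \sigma_N+X) = \sum_{i_1 > \dots > i_m} (\sigma_{i_1}+X)\cdots(\sigma_{i_m}+X)$. Because $X$ commutes with every $\sigma_{i_k}$, each product expands into a sum over subsets $S \subseteq \{1,\dots,m\}$, giving $\bigl(\prod_{k \in S} \sigma_{i_k}\bigr) X^{m-|S|}$ with the factors in the product appearing in the decreasing order of $k$ inherited from $i_1 > \dots > i_m$. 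Collecting by $j = |S|$ and re-parameterising: each pair $(i_1 > \dots > i_m, S)$ with $|S| = j$ is equivalent to choosing two disjoint subsets of $\{1,\dots,N\}$, namely the $j$ indices $\ell_1 > \dots > \ell_j$ used for the $\sigma$ factors and the remaining $m - j$ indices that fill the $X$ positions (the interleaving is forced by the strict decrease). Hence for each decreasing $j$-tuple $\ell_1 > \dots > \ell_j$ there are exactly $\binom{N-j}{m-j}$ choices of the complementary indices, which yields the identity after summing over all such $\ell$-tuples.

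Finally I would specialise to $N = n+1$, which is precisely \eqref{eq:sympolys}. The main obstacle is the combinatorial bookkeeping in the identity above — in particular, justifying that the interleaving of the two decreasing subsequences into a single strictly decreasing sequence is unique and that the resulting factor $\prod_{k \in S} \sigma_{i_k}$ only depends on $\{\ell_1 > \dots > \ell_j\}$ (and not on the complementary indices), so that the count reduces cleanly to $\binom{N-j}{m-j}$. Once this combinatorial lemma is established, the statement of \cref{lem:sympolys} is immediate from the commutation of $X$ with the $\sigma_i$'s.
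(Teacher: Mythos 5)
Your proof is correct, and it rests on exactly the same decomposition the paper uses: writing $\rho_i = \sigma_i + X$ with $X = (\kk+n)t_{-1} + b_{-1} + \frac{\kk+n+1}{n+1}c_{-1}$ central because it acts on the $\Pi$ tensor factor while each $\sigma_i$ acts on $\VOA{H}_\alpha$. The only difference is that the paper disposes of the resulting shift identity by citing Proposition 12.4.4 of \cite{Molev18} (with $u = X$ and $\tau + \mu_i[-1] = \sigma_i$), whereas you prove that identity from scratch; your combinatorial argument --- pushing the central $X$'s to the right, noting the $\sigma$-factors retain the decreasing order of their indices, and counting the $\binom{N-j}{m-j}$ completions of a fixed $j$-subset --- is complete and correct, including the uniqueness of the interleaving. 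So this is essentially the paper's proof, made self-contained.
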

\begin{proof}
This is Proposition 12.4.4 in \cite{Molev18} with $N = n+1$,
\begin{equation}
    \tau + \mu_{i}[-1] = \sigma_{i}, \quad u = (\kk+n)t_{-1} + b_{-1}  + \frac{\kk+n+1}{n+1}c_{-1}
\end{equation}
and noting that under this identification, $\rho_i = u+\tau + \mu_{i}[-1]$.
\end{proof}

\noindent The above lemma shows that the only $\VOA{H}_\alpha$ fields that appear in the free-field realisation of $\uSubkk$ in Proposition \ref{prop:stronggensubreg} are those given by \eqref{eq:freefieldforregfields}, i.e. fields of $\uRegkk$ as expected. The `$\uRegkk$ content' of the strong generators $L(z)$, $G^+(z)$, $J(z)$ is clear from the formulae given earlier. For the fields $U_3(z)$, \dots, $U_n(z)$, $G^-(z)$, we have the following structural result:

\begin{theorem} \label{thm:regcontent}
Let $ i \in \{ 3, \dots, n\}$. There exist $i$ fields $\picorr{i}{j}(z)$ in the vertex subalgebra of $\Pi$ generated by $c(z)$ and $d(z)$ such that
\begin{equation} \label{eq:regcontentcasimirs}
    U_i(z) = (-1)^{i+1} W_i(z) + \sum_{j=0}^{i-1} W_j(z) \otimes \picorr{i}{j}(z). 
\end{equation}
The field $G^-(z)$ can be written as
\begin{equation}
    G^-(z) = W_{n+1}(z) \otimes \ee^{-c}(z) +   \sum_{j=0}^{n} W_j(z) \otimes \left( \picorr{-}{j}_{(-1)} \ee^{-c}(z) \right),
\end{equation}
where $\picorr{-}{j}(z) \in \Pi$ is given by
\begin{equation}
    \picorr{-}{j}(z) = \big( (\kk + n) t_{-1} + a_{-1} \big)^{n+1-j}\wun_\Pi(z).
\end{equation}
\end{theorem}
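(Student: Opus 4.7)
My plan is to derive both decompositions from the explicit free-field realisation of Proposition \ref{prop:stronggensubreg} together with the noncommutative symmetric polynomial identity of Lemma \ref{lem:sympolys}.

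For the decomposition of $U_i$, I would start from
$$U_i = \sum_{j=0}^i (-1)^{i+j}\Big(\prod_{m=1}^j \tfrac{m(\kk+n)+1}{m(\kk+n)}\Big) E_{i-j}(\rho_1, \ldots, \rho_{n+1})\,\rho_0^j \wun.$$
The operator $\rho_0 = (\kk+n)(\partial + c_{-1})$ preserves the subspace $\wun_R \otimes \Pi$, since $T_{-1}\wun_R = 0$ forces the $\partial$-term to reduce to $t_{-1}$ acting on the $\Pi$-factor. Applying Lemma \ref{lem:sympolys} with the $\Pi$-side operator $u = (\kk+n)t_{-1} + b_{-1} + \tfrac{\kk+n+1}{n+1}c_{-1}$, which commutes with each $\sigma_i$ because the two act on different tensor factors, together with the identification $E_l(\sigma_1,\ldots,\sigma_{n+1})\wun_R = -W_l$ read off from \eqref{eq:freefieldforregfields}, yields a decomposition $U_i = \sum_{l=0}^i W_l \otimes \pi^{i,l}$ in which each $\pi^{i,l}$ is a linear combination of iterated actions of $u$ and $\rho_0$ on $\wun_\Pi$. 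Since $u, \rho_0$ involve only the modes $t_{-1}, b_{-1}, c_{-1}$, and since $a, b, t$ are themselves constructed from $c$ and $d$, each $\pi^{i,l}$ lies in the vertex subalgebra of $\Pi$ generated by $c(z)$ and $d(z)$. The leading $W_i$ contribution is isolated from the $(l,j)=(i,0)$ term, whose overall coefficient collapses to $(-1)^{i+1}$.

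For the decomposition of $G^-$, the analogous expansion applied to $G^- = -E_{n+1}(\rho_1,\ldots,\rho_{n+1})\ee^{-c}$ via Lemma \ref{lem:sympolys} first produces
$$G^- = W_{n+1}\otimes \ee^{-c} + \sum_{j=0}^{n} W_j \otimes u^{n+1-j}\ee^{-c}.$$
The remaining task is to rewrite the $\Pi$-factor $u^{n+1-j}\ee^{-c}$ in the claimed form $\pi^{-,j}_{(-1)}\ee^{-c}$. The key input is the observation made just before the theorem that $\rho_i\ee^{-c} = a_{-1}\ee^{-c} + \sigma_i\ee^{-c}$, which itself follows from the direct mode computation $t_{-1}\ee^{-c} = -c_{-1}\ee^{-c}$ (obtained from the fact that $\ee^{-c}$ is a Heisenberg ground state for $c$) combined with the algebraic identity $\ell_n(\kk) + \tfrac{\kk+n+1}{n+1} = \kk+n$. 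This replaces the $\Pi$-side operator $u$ acting on $\ee^{-c}$ by $P = (\kk+n)t_{-1} + a_{-1}$, so that an iterated application of the noncommutative symmetric polynomial identity packages the $\Pi$-factor as $(P^{n+1-j}\wun_\Pi)_{(-1)}\ee^{-c} = \pi^{-,j}_{(-1)}\ee^{-c}$, while the $j=n+1$ term of the expansion isolates the leading $W_{n+1}\otimes \ee^{-c}$.

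The hard part will be handling the noncommutativity carefully when iterating $\rho_i$ on $\ee^{-c}$: although $u\ee^{-c} = a_{-1}\ee^{-c}$ at first order, the operators $u$ and $a_{-1}$ do not act identically at higher orders, so one must use the state-field correspondence on $\Pi$ together with $[t_{-1}, a_{-1}] = a_{-2}$ to convert the iterated product into the compact $\pi^{-,j}$ form. This combinatorial bookkeeping, and verifying that it produces exactly the state $P^{n+1-j}\wun_\Pi$ on the $\Pi$-side (rather than an unwanted correction involving $c_{-2}, d_{-2}$, etc.), is where the main work of the proof lies.
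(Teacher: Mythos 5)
Your proposal is correct and follows essentially the same route as the paper: substitute the noncommutative symmetric polynomial identity of Lemma \ref{lem:sympolys} into the formulae of Proposition \ref{prop:stronggensubreg}, identify $E_l(\sigma_1,\dots,\sigma_{n+1})\wun_R = -W_l$ via \eqref{eq:freefieldforregfields}, and for $G^-$ convert the resulting $\Pi$-side factor using $t_{-1}\ee^{-c} = -c_{-1}\ee^{-c}$ together with $\ell_n(\kk)+\tfrac{\kk+n+1}{n+1}=\kk+n$. The only difference is that you spell out the final mode-reordering bookkeeping that the paper dispatches in one line by appealing to the commutativity of negative Heisenberg modes in $\Pi$.
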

\begin{proof}
Starting with $G^-(z)$, substituting \eqref{eq:sympolys} into the expression for $G^-$ in Proposition \ref{prop:stronggensubreg} gives
\begin{equation}
\begin{aligned}
    G^- &= -\sum_{j=0}^{n+1} \left( E_j(\sigma_1, \dots, \sigma_{n+1}) \wun_R  \right) \otimes \left( \left( (\kk+n)t_{-1} + b_{-1}  + \frac{\kk+n+1}{n+1}c_{-1}  \right)^{n+1-j}\ee^{-c} \right) \\
    &=  \sum_{j=0}^{n+1} W_j \otimes \left( \tilde{\pi}^{-,j}_{(-1)}\ee^{-c} \right)
\end{aligned}
\end{equation}
for some field $\tilde{\pi}^{-,j} (z) \in \Pi$. That $\tilde{\pi}^{-,j} = \picorr{-}{j}$ follows from the action $t_{-1} \ee^{-c} = -c_{-1}\ee^{-c}$ and the fact that negative modes in $\Pi$ commute. The decomposition of $U_i(z)$ in terms of $W_j(z)$ for $j \le i$ is similarly obtained by substituting \eqref{eq:sympolys} into the expression for $U_i$ given in Proposition \ref{prop:stronggensubreg}. 
\end{proof}

\noindent A consequence of these formulae is that the embedding $\uSubkk \hookrightarrow \uRegkk \otimes \Pi$ exists for all non-critical $\kk$. This is because the strong generators $\set{G^+,J,L,U_3,\dots,U_n,G^-}$ and $\set{T,W_3,\dots,W_{n+1}}$ of $\uSubkk$ and $\uRegkk$ respectively exist for all non-critical $\kk$, and the decompositions of the former in terms of the latter and states in $\Pi$ are well-defined for all $\kk \neq -n-1$. Injectivity of the corresponding vertex operator algebra homomorphism $\uSubkk \rightarrow \uRegkk \otimes \Pi$ follows from the injectivity of the free-field realisations $\uSubkk \hookrightarrow \VOA{H}_\alpha \otimes \Pi$ and $\uRegkk \hookrightarrow \VOA{H}_{\tilde{\alpha}}$.

When $\kk$ is generic, \cref{prop:freefieldiso} shows that the explicit embedding described in this section is indeed the one specified by the screening operator in \eqref{eq:finalscreening}.

The critical level embedding of vertex algebras $\uSub{-n-1}{n+1} \hookrightarrow \uReg{-n-1}{n+1} \otimes \Pi$, as described in \cite{GenStrong20}, is obtained using the same formulae as in the non-critical case except for multiplying $L = T+t$ by $(\kk+n+1)$.

An important feature of these decompositions is that the Miura basis $\uRegkk$ fields appear undisturbed in the sense that their derivatives and normally ordered products do not appear. Moreover, all Miura basis fields of $\uRegkk$ appear somewhere in these expansions. These properties will allow us to prove the almost-simplicity of a large class of $\uSubkk$-modules that we construct using the inverse reduction embedding in an analogous way to that used in Section 5 of \cite{AdaRea20}. 

We conclude this section with a few examples. The convention for normally ordered products used here is right-to-left. For example, $\no{A(z)B(z)C(z)} = \no{A(z)\big(\no{B(z)C(z)}\big)}$.

\begin{example}[$n=1$]
Denote the generators of $\uSub{\kk}{2} = \uaffvoa{\kk}{\sltwo}$ by $h(z) = 2 J(z)$, $e(z)=G^+(z)$ and $f(z) = G^-(z)$. The regular W-algebra $\uReg{\kk}{2}$ is isomorphic to the Virasoro vertex operator algebra $\VOA{Vir}^\kk$ generated by a single energy-momentum field $T(z) = \frac{1}{\kk+2}W_2(z)$ with central charge $\cc^2_\kk$. The embedding $\uaffvoa{\kk}{\sltwo} \hookrightarrow \VOA{Vir}^\kk \otimes \Pi$ for $\kk \neq -2$ given by Theorem \ref{thm:invemb} is
\begin{equation}
    h(z) \mapsto 2 b(z), \quad e(z) \mapsto \ee^{c}(z), \quad f(z) \mapsto (\kk+2)T(z)\ee^{-c}(z) - \no{ (\kk+1)\partial a(z) + a(z)^2\big)\ee^{-c}(z)}
\end{equation}
where $a(z) = -\frac{\kk}{4}c(z) + \frac{1}{2}d(z)$ and $b(z) = \frac{\kk}{4}c(z) + \frac{1}{2}d(z)$. This is the embedding described in Section 3 of \cite{AdaRea17}.
\end{example}

\begin{example}[$n=2$]
$\uSub{\kk}{3} \cong \ubpvoak$ has strong generators denoted by $J(z)$, $L(z)$, $G^+(z)$ and $G^-(z)$. The regular W-algebra $\uReg{\kk}{3}$ (also known as the Zamolodchikov algebra) is generated by fields $T(z)$, $W(z)$ of conformal dimension $2$ and $3$ respectively. The operator product expansions for $\uWvoak$ are well-known and can be found in Appendix \ref{subsec:A2}. The embedding $\ubpvoak \hookrightarrow \uWvoak \otimes \Pi$ for $\kk \neq -3$ given by Theorem \ref{thm:invemb} is
\begin{equation}
\begin{gathered}
    J(z) \mapsto b(z), \quad L(z) \mapsto T(z) + t(z), \quad G^+(z) \mapsto \ee^{c}(z), \\
    G^-(z) \mapsto W_3(z)\ee^{-c}(z) + W_2(z)\no{a(z)\ee^{-c}(z)}-\no{\big((\kk+2)^2\partial^2 a(z)+3(\kk+2)a(z)\partial a(z)+ a(z)^3 \big)\ee^{-c}(z)}
\end{gathered}
\end{equation}
where 
\begin{equation}
    a(z) = -\frac{2\kk+3}{6}c(z) + \frac{1}{2}d(z), \quad 
    b(z) = \frac{2\kk+3}{6}c(z) + \frac{1}{2}d(z), \quad 
    t(z) = \frac{1}{2}\no{c(z)d(z)} + \frac{2\kk+3}{3}\partial c(z) -\frac{1}{2}\partial d(z).
\end{equation}
This is the embedding described in Section 3.3 of \cite{AdaRea20} as the Miura basis fields of $\uWvoak$ are related to $T(z)$ and $W(z)$ by
\begin{equation}
    T(z) = \frac{1}{\kk+3}W_2(z), \quad W(z) = W_3(z) -\frac{1}{2}(\kk+2)\partial W_2(z).
\end{equation}
\end{example}

\begin{example}[$n=3$]
$\uSub{\kk}{4}$ is our first new example. Once again we denote its strong generators by $J(z)$, $L(z)$, $U_3(z)$ $G^+(z)$ and $G^-(z)$. Denote the Miura basis fields for the corresponding regular W-algebra $\uReg{\kk}{4}$ by $T(z)$, $W_3(z)$, $W_4(z)$ as usual. The relevant basis for the Heisenberg fields in $\Pi$ consists of the fields
\begin{equation}
    a(z) = -\frac{3\kk+8}{8} c(z) +  \frac{1}{2}d(z), \quad b(z) = \frac{3\kk+8}{8} c(z) +  \frac{1}{2}d(z)
\end{equation}
while the conformal structure is furnished by 
\begin{equation}
    t(z) = \frac{1}{2}\no{c(z)d(z)} + \frac{3(3\kk+8)}{8}\partial c(z) -\frac{1}{2}\partial d(z).
\end{equation}
The embedding $\uSub{\kk}{4} \hookrightarrow \uReg{\kk}{4} \otimes \Pi$ for $\kk \neq -4$ given by Theorem \ref{thm:invemb} is
\begin{equation*}
    J(z) \mapsto b(z), \quad  L(z) \mapsto T(z) + t(z), \quad G^+(z)(z) = \ee^{c}(z),
\end{equation*}
\begin{equation}
\begin{aligned}
    G^-(z) \mapsto& \ W_4(z) \otimes \ee^{-c}(z) + W_3(z) \otimes \no{a(z)\ee^{-c}(z)} + W_2(z) \otimes \no{\left((\kk+3) \partial a(z) + a(z)^2 \right)\ee^{-c}(z)} \\ 
            &-\no{\left(a(z)^4 + (\kk+3)^3 \partial^3 a(z)+ 3(\kk+3) \partial a(z)^2 + 4(\kk+3)^2 a(z) \partial^2 a(z) + 6 a(z)^2 \partial a(z) \right)\ee^{-c}(z)},\\
\end{aligned}
\end{equation}
\begin{equation*}
\begin{aligned}
U_3(z) \mapsto& \ W_3(z) + 2 W_2(z) \otimes m(z) -4(\kk+3)^2\partial^2 m(z) - 12(\kk+3) \no{m(z)\partial m(z)} - 4\no{m(z)^3} \\
    & \hspace{-15pt}+ (\kk+4) \left( - W_2(z) \otimes c(z) + 6(\kk+3)^2 \partial^2 c(z) + 6(\kk+3)\no{\partial m(z) c(z)} + 12(\kk+3)\no{m(z) \partial c(z)} + 6\no{m(z)^2 c(z)}  \right)\\
    & \hspace{10pt} - 2(\kk+4)(2\kk+7) \left((\kk+3)(\partial^2 c(z) + \no{c(z)\partial c(z)}) + \no{m(z) \partial c(z)} + \no{m(z) c(z)^2}  \right) \\
    & \hspace{30pt} +\frac{(\kk+4)(2\kk+7)(3\kk+10)}{6} \left(\partial^2 c(z) + 3 \no{c(z) \partial c(z)} + \no{c(z)^3} \right),
\end{aligned}
\end{equation*}
where $m(z) = b(z) + \frac{\kk+4}{4}c(z) \in  \Pi$. Define the field
\begin{equation}
    W(z) = -\frac{1}{\kk+2}U_3(z) - \frac{2(2\kk+5)}{3}\partial^2 J(z) + \frac{4(\kk+4)}{3\kk+8}\no{J(z)L(z)} - 6 \no{J(z)\partial J(z)} + \frac{\kk+4}{2}\partial L(z) - \frac{8(11\kk+32)}{3(3\kk+8)^2}\no{J(z)^3}.
\end{equation}
This is a primary field of conformal dimension 3. The fields $\set{J(z), L(z), W(z), G^+(z), G^-(z)}$ also strongly generate $\uSub{\kk}{4}$. Checking that the operator product expansions \eqref{ope:FS} are correctly reproduced can be done tediously by hand or quickly using a computer using tools such as the Mathematica package \emph{OPEdefs} \cite{OPEdefs}.
\end{example}

\section{Relaxed modules for subregular W-algebras} \label{sec:relaxedmods}
The existence of an embedding $\uSubkk \hookrightarrow \uRegkk \otimes \Pi$ at non-critical level allows us to construct infinite families of $\uSubkk$-modules by taking appropriate tensor products of $\uRegkk$- and $\Pi$-modules. Particularly important amongst modules for subregular W-algebras are relaxed modules. 

These play a central role in computing modular transformations and fusion rules for admissible-level $\saffvoa{\kk}{\sltwo}$ and admissible-level $\sbpvoak$. In both cases, this is because relaxed modules can be realised in terms of modules for the corresponding regular W-algebra and for $\Pi$ using inverse quantum hamiltonian reduction. This is described in detail in Section 7.1 of \cite{AdaRea17} for $n=1$ and Section 3.3 of \cite{AdaRea20} for $n=2$. It is therefore reasonable to anticipate that some of the $\uSubkk$-modules constructed by way of our embedding will play a central role in the representation theory of $\sSubkk$ and the construction of `logarithmic minimal models' with $\sSubkk$ symmetry. 

Much of this section follows the approach taken for $n=2$. As we are interested in extracting information out of this embedding, we assume that $\kk$ is non-critical for the remainder of this paper. We will also frequently identify $\uSubkk$ with its image under the embedding obtained in Proposition \ref{prop:stronggensubreg}.

\subsection{Relaxed modules} \label{subsec:relaxedmod}
Before constructing $\uSubkk$-modules, we recall some general properties and definitions of modules over vertex operator algebras as used in the analysis of relaxed modules for Bershadsky--Polyakov algebras in \cite{AdaRea20}.

\begin{definition}
Let $V$ be a vertex operator algebra, $M$ a $\ZZ_{\ge 0}$-graded $V$-module and denote the top space of $M$ by $M_{\text{top}} = M_0$.
\begin{itemize}
\item $M$ is \emph{top-generated} if $M$ is generated by $M_\text{top}$.
\item $M$ has \emph{only top-submodules} if every nonzero submodule of $M$ has nonzero intersection with $M_\text{top}$.
\item $M$ is \emph{almost-irreducible} if it is top-generated and has only top-submodules.
\end{itemize}
\end{definition}

\noindent By Zhu's theorem \cite{ZhuMod96}, $M_\text{top}$ is a module for the associative algebra $\zhu{V}$. When looking to construct irreducible $\VOA{V}$-modules, it is convenient to consider $\VOA{V}$-modules $M$ whose submodules are all generated by $\zhu{\VOA{V}}$-submodules of $M_{\text{top}}$. This is because checking if such a $\VOA{V}$-module $M$ is irreducible requires only checking if $M_{\text{top}}$ is an irreducible $\zhu{\VOA{V}}$-module:

\begin{proposition}[Proposition 5.2 \cite{AdaRea20}]
If $M$ is almost-irreducible and $M_\text{top}$ is irreducible as a $\zhu{V}$-module then $M$ is irreducible.
\end{proposition}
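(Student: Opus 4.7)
The plan is to take an arbitrary nonzero submodule $N \subseteq M$ and argue that it must equal $M$, by combining the two halves of almost-irreducibility with the $\zhu{V}$-module hypothesis.

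First I would invoke the only-top-submodules property of $M$ to deduce that $N \cap M_\text{top}$ is nonzero. The next step is to recognise $N \cap M_\text{top}$ as a $\zhu{V}$-submodule of $M_\text{top}$: the action of $\zhu{V}$ on the top space is induced, via Zhu's theorem, by the zero modes $A_0$ of fields $A(z) \in V$, which preserve the $\ZZ_{\ge 0}$-grading on $M$; since $N$ is a $V$-submodule it is closed under these zero modes, and hence so is its intersection with $M_\text{top}$. The irreducibility hypothesis on $M_\text{top}$ as a $\zhu{V}$-module then forces $N \cap M_\text{top} = M_\text{top}$, so that $M_\text{top} \subseteq N$.

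To finish, I would use the top-generation hypothesis: the $V$-submodule of $M$ generated by $M_\text{top}$ is $M$ itself, and since $N$ is a $V$-submodule containing $M_\text{top}$, it must contain this generated submodule. Therefore $N \supseteq M$, giving $N = M$, and hence $M$ is irreducible. There is no genuine obstacle here; the proof is essentially a chase through the definitions. The only mild subtlety is ensuring that the $\zhu{V}$-action on $M_\text{top}$ descends cleanly to $N \cap M_\text{top}$, which is automatic once one notes that the zero modes defining this action preserve both $N$ (as $N$ is a $V$-submodule) and the conformal grading (hence $M_\text{top}$).
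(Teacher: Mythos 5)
Your proof is correct and is essentially the same definition-chase as the argument in the cited source (the paper itself only quotes the result from \cite{AdaRea20}): only-top-submodules gives a nonzero $\zhu{V}$-submodule $N \cap M_\text{top}$, irreducibility of the top space forces $M_\text{top} \subseteq N$, and top-generation finishes. Your remark that the zero modes preserve both $N$ and the grading is exactly the right point to check, so there is nothing to add.
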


\noindent By the embedding \eqref{eq:stronggensubreg}, there are two strong generators of $\uSubkk$ living only in $\Pi$: $G^+ \rightarrow \ee^c$ and $J\rightarrow b$. Let $U$ be the vertex subalgebra of $\Pi$ generated by $b$ and $\ee^c$. As a $U$-module, $\halflatticemod{-1}{\lambda}$ is also particularly nice.

\begin{proposition}
$\halflatticemod{-1}{\lambda}$ is almost-irreducible as a $U$-module.
\end{proposition}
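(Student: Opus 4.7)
The plan is to verify both parts of the almost-irreducibility definition in turn.

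For top-generation, observe that $M_\text{top} = \vspn_{\CC} \set{\ee^{-b+(\lambda+m)c} \st m \in \ZZ}$ by the earlier proposition on $\halflatticemod{-1}{\lambda}$. Since $\set{b, c}$ is a basis of the Heisenberg subalgebra of $\Pi$, every element of $\halflatticemod{-1}{\lambda}$ is a linear combination of PBW monomials $c_{-i_1}\cdots c_{-i_p}\,b_{-j_1}\cdots b_{-j_q}\,\ee^{-b+(\lambda+m)c}$ with $i_l, j_l \ge 1$. The field $b \in U$ supplies all $b_{-j}$ creation modes. For the $c$-creation, applying the modes $(\ee^{mc})_{-k}$ to a top vector $\ee^{-b+\mu c}$ yields, via the explicit lattice vertex-operator formula using $\inner{mc}{-b+\mu c} = -m$, elementary Schur polynomials in $c_{-1}, c_{-2}, \dots$ acting on $\ee^{-b+(\mu+m)c}$. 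Varying $m \ge 1$ and $k \ge 0$ and invoking linear independence lets me isolate each individual $c$-monomial; combined with the $b$-action, this recovers every PBW basis element from $M_\text{top}$ under the $U$-action.

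For the only-top-submodules property, let $N \subseteq \halflatticemod{-1}{\lambda}$ be a nonzero $U$-submodule. Since $\halflatticemod{-1}{\lambda}$ is $\NN$-graded by $t_0 - \tfrac{n}{2} \ell_n(\kk)$ and this grading is preserved by all $U$-actions, $N$ inherits an $\NN$-grading and possesses a nonzero homogeneous vector $v$ of minimum weight. Then every positive mode of every state of $U$ annihilates $v$, and the crux is to conclude $v \in M_\text{top}$, which combined with $v \neq 0$ yields $N \cap M_\text{top} \neq 0$. I would argue this by expanding $v$ in the PBW basis and applying positive modes of $b$, of the $\ee^{mc}$, and of their normally-ordered composites to constrain the coefficients, forcing all creation content of $v$ to vanish.

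The delicate step is this final implication. Positive modes of $b$ and of $\ee^c$ alone do not see $a$-creation content, since $\inner{b}{a} = 0$ and $a$ commutes with $\ee^c$ in $\Pi$; a vector such as $a_{-1}\ee^{-b+\lambda c}$ is superficially annihilated by these modes despite not lying in $M_\text{top}$. Overcoming this obstruction requires exploiting positive modes of more elaborate composite states in $U$, such as $\no{\ee^{mc} b^{k}}$ for varying $m$ and $k$, whose Wick expansions couple the $b$-modes to the shifts induced by $\ee^{mc}$ and thereby detect the $a$-creation coefficients of $v$.
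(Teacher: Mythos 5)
Your two-part architecture (top-generation, then only-top-submodules, both via PBW bases and mode computations) is sound and matches the strategy of the $n=2$ argument in \cite{AdaRea20} to which the paper's one-line proof defers, but both halves contain concrete errors. For top-generation, the claim that single applications of the modes of $\ee^{mc}$ with varying $m$ ``isolate each individual $c$-monomial'' is false. Writing $\exp\left(\sum_{j\ge 1}\tfrac{m}{j}c_{-j}z^j\right) = \sum_{K\ge 0}S_K^{(m)}z^K$, a single application of a mode of $\ee^{mc}$ to a top vector produces $S_K^{(m)}$, in which the monomial attached to a partition of $K$ with $r$ parts carries a coefficient proportional to $m^r$; varying $m$ therefore separates monomials only by their number of parts. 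At $K=4$ this gives a four-dimensional span inside the five-dimensional weight-$4$ subspace of $\CC[c_{-1},c_{-2},\dots]$, and the combination $\tfrac18 c_{-2}^2+\tfrac13 c_{-1}c_{-3}$ cannot be split. The repair is to iterate: composing $\ee^{c}$-modes yields all products $S_{k_1}^{(1)}\cdots S_{k_r}^{(1)}$, and the $S_k^{(1)}$ generate $\CC[c_{-1},c_{-2},\dots]$ as an algebra.

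The second half is where the real gap lies: the implication that a minimal-weight vector $v$ annihilated by all positive modes of $U$ lies in the top space is the entire content of the only-top-submodules property, and you leave it as a strategy rather than a proof. Worse, the ``obstruction'' motivating your proposed detour through composite fields rests on a miscalculation: $a$ does \emph{not} commute with $\ee^{c}$, since $\inner{a}{c} = \tfrac12\inner{d}{c} = 1$, and indeed $(\ee^{c})_{(1)}\,a_{-1}\ee^{-b+\lambda c} = -\,\ee^{-b+(\lambda+1)c} \neq 0$, so your test vector is already detected by an ordinary positive mode of $\ee^{c}$. With this corrected the argument closes without composites: annihilation by $b_n$, $n>0$, forces $v = P(a_{-1},a_{-2},\dots)\,\ee^{-b+\mu c}$ (using $\inner{a}{b}=0$ and $\inner{b}{b}=\ell_n(\kk)\neq 0$); on such $v$ the annihilating half of $\ee^{c}(z)$ implements the substitution $a_{-j}\mapsto a_{-j}-z^{-j}$ while the creating half is $1+O(z)$, so vanishing of all $(\ee^{c})_{(j)}v$ with $j\ge 1$ is equivalent to $P(a_{-j}-z^{-j}) = P(a_{-j})$, which forces $P$ to be constant and hence $v$ to lie in the top space. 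You should also flag the degenerate case $\ell_n(\kk)=0$, where $a=b$ and the first step breaks down.
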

\begin{proof}
The proof for all $n$ is identical to that for the $n=2$ case described in Section 5.1 of \cite{AdaRea20}. Adapting it to the case of general $n$ simply requires replacing $j$ and $i$ with $b$ and $a$ respectively and keeping in mind the different conformal structures.
\end{proof}

\noindent Again, under the identification described in \cref{subsec:embedding}, the generators of $\wun_R \otimes U$ are elements of $\uSubkk$. So $\wun_R \otimes U$ is also a vertex subalgebra of $\uSubkk$. 

Given a $\uRegkk$-module $M$, the $\uRegkk\otimes\Pi$-module $\tpmod{M}{r,\lambda} = M \otimes \halflatticemod{r}{\lambda}$, where $r\in \ZZ$ to ensure $\ZZ$-grading, is also a $\uSubkk$-module by restriction. As $\halflatticemod{r}{\lambda} \cong \halflatticemod{r}{\lambda+n}$ as $\Pi$-modules for all $n \in \ZZ$, $\tpmod{M}{r,\lambda} \cong \tpmod{M}{r,\lambda + n}$ as $\uSubkk$-modules. Additionally, $J(z) \in \uSubkk$ is identified with $b(z)$ under the embedding defined by Theorem \ref{thm:invemb} so applying the $\uSubkk$ version of spectral flow to $\tpmod{M}{r,\lambda}$ can be performed purely in terms of the $\Pi$ version of spectral flow:
\begin{equation}
    \sfmod{\ell}{\tpmod{M}{r,\lambda}} 
    = \sfmod{\ell}{M \otimes \halflatticemod{r}{\lambda}} 
    = M \otimes \rho^\ell\left( \halflatticemod{r}{\lambda} \right) = M \otimes \halflatticemod{r+\ell}{\lambda} 
    = \tpmod{M}{r+\ell,\lambda}.
\end{equation}
We therefore interpret the label `$r$' in $\tpmod{M}{r,\lambda}$ as a spectral flow index. Owing again to the simplicity of the expressions for $J(z)$ and $L(z)$ in terms of $\uRegkk$ and $\Pi$ fields, character formulae for $\tpmod{M}{r,\lambda}$ are immediate from their construction. 

\begin{corollary} \label{cor:char}
Suppose that $M$ is a $\uRegkk$-module with $q$-character $\qcharac{M}{q} = \textup{tr}_M\left(q^{T_0 - \Regcck/24}\right)$. Then the $\uSubkk$-module $\tpmod{M}{r,\lambda}$ has character
\begin{equation}
\begin{aligned}
    \charac{\tpmod{M}{r,\lambda}}{z}{q}
    &= \emph{tr}_{\tpmod{M}{r,\lambda}} \left(z^{J_0} q^{L_0 -\frac{\Subcck}{24}} \right) \\
    &= \qcharac{M}{q}\charac{\halflatticemod{r}{\lambda}}{z}{q}\\
    &= \qcharac{M}{q} z^{(r+1)\ell_n(\kk)}q^{(r+1)(r+2-n)\frac{\ell_n(\kk)}{2}}\charac{\halflatticemod{-1}{\lambda}}{z q^{r+1}}{q}
\end{aligned}
\end{equation}
where $\charac{\halflatticemod{-1}{\lambda}}{z}{q}$ is given by \eqref{eq:minusonelatticemodcharac}. 
\end{corollary}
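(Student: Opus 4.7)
The plan is to reduce the computation of $\charac{\tpmod{M}{r,\lambda}}{z}{q}$ to a product of traces on each tensor factor by exploiting the explicit form of the embedding $\uSubkk \hookrightarrow \uRegkk \otimes \Pi$ on the conformal and Heisenberg generators.

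First I would recall from Proposition \ref{prop:stronggensubreg} (and the surrounding discussion) that under the embedding, the strong generators $J(z)$ and $L(z)$ of $\uSubkk$ are identified with
\begin{equation*}
J = \wun_R \otimes b, \qquad L = T\otimes \wun_\Pi + \wun_R \otimes t.
\end{equation*}
Consequently, acting on $\tpmod{M}{r,\lambda} = M \otimes \halflatticemod{r}{\lambda}$, the zero modes decompose as $J_0 = \id_M \otimes b_0$ and $L_0 = T_0 \otimes \id + \id_M \otimes t_0$. Since these two operators act on different tensor factors and are mutually commuting, the operator $z^{J_0} q^{L_0}$ factorises as a tensor product of operators acting on $M$ and on $\halflatticemod{r}{\lambda}$ respectively.

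Second, I would invoke the central charge identity $\Regcck + \lcck = \Subcck$ (recorded just after \eqref{eq:piemfield}), which gives $-\Subcck/24 = -\Regcck/24 - \lcck/24$, so that the $q^{-c/24}$ normalisations split cleanly between the two factors. Combined with the trace-of-tensor-product identity $\tr_{X \otimes Y}(A \otimes B) = \tr_X(A) \tr_Y(B)$, this yields
\begin{equation*}
\charac{\tpmod{M}{r,\lambda}}{z}{q}
= \tr_M\!\left(q^{T_0 - \Regcck/24}\right) \cdot \tr_{\halflatticemod{r}{\lambda}}\!\left(z^{b_0} q^{t_0 - \lcck/24}\right)
= \qcharac{M}{q}\,\charac{\halflatticemod{r}{\lambda}}{z}{q}.
\end{equation*}
The interchange of the infinite sum defining the trace with the tensor factorisation is legitimate because both factors are graded modules with finite-dimensional graded components (the top space is finite-dimensional in the cases of interest, and lower-boundedness of $T_0$ and $t_0$ ensures that only finitely many terms contribute at each power of $q$).

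Finally, to obtain the third displayed form, I would substitute formula \eqref{eq:rlatticemodcharac} for $\charac{\halflatticemod{r}{\lambda}}{z}{q}$, which expresses the character of the spectral-flowed $\Pi$-module in terms of $\charac{\halflatticemod{-1}{\lambda}}{zq^{r+1}}{q}$ with the prefactor $z^{(r+1)\ell_n(\kk)} q^{(r+1)(r+2-n)\ell_n(\kk)/2}$, yielding the stated expression. There is no real obstacle here: the proof is essentially bookkeeping once the additivity $L = T + t$, $J = b$ and of central charges has been established, which is precisely the content of \cref{sec:invemb}.
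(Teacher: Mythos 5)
Your proposal is correct and is exactly the argument the paper intends: the corollary is stated as an immediate consequence of the decompositions $L = T + t$ and $J = b$ from the embedding, the central charge additivity $\Regcck + \lcck = \Subcck$, the factorisation of the trace over the tensor product, and the substitution of \eqref{eq:rlatticemodcharac}. No further comment is needed.
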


\noindent It is useful to know what properties of $M$ are inherited by $\tpmod{M}{-1,\lambda}$  (recall that $\halflatticemod{r}{\lambda}$ is positive-energy with respect to $t(z)$ only when $r = -1$). For example, only having top-submodules and being top-generated. Fortunately, $\halflatticemod{-1}{\lambda}$ being an almost-irreducible $U$-module is strong enough to require fairly mild constraints on the $M$ for which such properties are inherited by $\tpmod{M}{-1,\lambda}$. As in the $n=2$ case, it is convenient to introduce $U_{n+1} = (-1)^n G^-_{(-1)}G^+ \in \uSubkk$. This field can be expanded as
\begin{equation}
    U_{n+1}(z) = (-1)^n W_{n+1}(z) + \sum_{j=0}^n W_j(z) \otimes \picorr{n+1}{j}(z) 
\end{equation}
for some fields $\picorr{n+1}{j}(z)$ in the vertex subalgebra of $\Pi$ generated by $c(z)$ and $d(z)$.  The following theorems are generalisations of Theorems 5.9 and 5.10 in \cite{AdaRea20}. The main difference when $n>2$ is the existence of strong-generating fields $U_i(z)$. This would be an issue if not for the structure of the decompositions of such fields in terms of fields in $\uRegkk$ (i.e. the lack of derivatives or normally ordered products).

\begin{theorem}
If $M$ is a weight $\uRegkk$-module that has only top-submodules, then the weight $\uSubkk$-module $\tpmod{M}{-1,\lambda}$ has only top-submodules  for all $\lambda \in \CC$.
\end{theorem}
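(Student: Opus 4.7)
The strategy is to take a nonzero element of $N \subseteq \tpmod{M}{-1,\lambda}$ of minimum $L_0$-eigenvalue and show it lies in the top space $M_\text{top} \otimes (\halflatticemod{-1}{\lambda})_\text{top}$. Since $M$ is positive-energy (it has a top space by hypothesis) and $\halflatticemod{-1}{\lambda}$ is positive-energy, the grading $L_0 = T_0 + t_0$ is bounded below on $N$, so such a vector $v \in N$ exists and is annihilated by every strictly positive mode of $\uSubkk$.

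First, to deduce $v \in M \otimes (\halflatticemod{-1}{\lambda})_\text{top}$, write $v = \sum_\alpha m_\alpha \otimes s_\alpha$ with $\{m_\alpha\}$ linearly independent in $M$. The subalgebra $U \subseteq \uSubkk$ generated by the images $J \mapsto b$ and $G^+ \mapsto \ee^c$ acts only on the $\halflatticemod{-1}{\lambda}$-factor, so each $s_\alpha$ is annihilated by every positive $U$-mode. The $U$-submodule $U \cdot s_\alpha$ therefore has $t_0$-eigenvalues in $[t_0(s_\alpha), \infty)$, and by almost-irreducibility of $\halflatticemod{-1}{\lambda}$ as a $U$-module it must meet $(\halflatticemod{-1}{\lambda})_\text{top}$, forcing $t_0(s_\alpha) = \min t_0$ for every $\alpha$.

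Next, re-express $v$ with the $s_\alpha$ linearly independent in $(\halflatticemod{-1}{\lambda})_\text{top}$ and prove by induction on $i$ that $(W_i)_r m_\alpha = 0$ for $2 \le i \le n+1$ and $r \ge 1$. The base case $i=2$ follows from $L_r v = (T_r + t_r) v = 0$ together with $t_r$ annihilating the top: $T_r m_\alpha = 0$, i.e., $(W_2)_r m_\alpha = 0$. For $3 \le i \le n$, Theorem \ref{thm:regcontent} decomposes $U_i = (-1)^{i+1} W_i + \sum_{j<i} W_j \otimes \picorr{i}{j}$; in the mode expansion of each cross term $(W_j \otimes \picorr{i}{j})_r = \sum_{k+\ell = r} (W_j)_k (\picorr{i}{j})_\ell$, either the $\Pi$-factor mode is positive (killing the top) or $k \ge r \ge 1$ (killing $m_\alpha$ by induction, base case, or triviality of $W_0, W_1$). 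The only surviving contribution to $(U_i)_r v = 0$ is $(-1)^{i+1} \sum_\alpha (W_i)_r m_\alpha \otimes s_\alpha$, and linear independence of the $s_\alpha$ yields $(W_i)_r m_\alpha = 0$. For $W_{n+1}$, the same reasoning applied to $(G^-)_r v = 0$ eliminates all $j \le n$ contributions, leaving only the piece from $W_{n+1} \otimes \ee^{-c}$. Separating this into its $t_0$-homogeneous components—namely $\sum_\alpha (W_{n+1})_r m_\alpha \otimes (\ee^{-c})_0 s_\alpha$ in the top and $\sum_\alpha \sum_{\ell' \ge 1} (W_{n+1})_{r+\ell'} m_\alpha \otimes (\ee^{-c})_{-\ell'} s_\alpha$ strictly above—each of which must vanish independently, and invoking injectivity of $(\ee^{-c})_0$ on $(\halflatticemod{-1}{\lambda})_\text{top}$ delivers $(W_{n+1})_r m_\alpha = 0$.

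Finally, each $m_\alpha$ is now annihilated by every positive $\uRegkk$-mode, so $\uRegkk \cdot m_\alpha$ has $T_0$-eigenvalues in $[T_0(m_\alpha), \infty)$; the only-top-submodule hypothesis on $M$ then forces $T_0(m_\alpha) = \min T_0$, that is, $m_\alpha \in M_\text{top}$. Hence $v \in M_\text{top} \otimes (\halflatticemod{-1}{\lambda})_\text{top}$, which is the top space of $\tpmod{M}{-1,\lambda}$, proving $N$ meets this top space nontrivially. The main technical hurdle is the inductive extraction of $(W_i)_r m_\alpha = 0$ from the cross-term analysis of $(U_i)_r v$ and $(G^-)_r v$: it depends crucially on the fact asserted by Theorem \ref{thm:regcontent} that the Miura generators $W_j$ appear in the decompositions of $U_i$ and $G^-$ \emph{undisturbed}, with no derivatives or normally-ordered products in $\uRegkk$; any such additional terms would introduce $\uRegkk$-mode contributions that no longer vanish cleanly by induction, and the bookkeeping would collapse.
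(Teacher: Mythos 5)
Your proof is correct, and it rests on the same two pillars as the paper's: the almost-irreducibility of $\halflatticemod{-1}{\lambda}$ over the subalgebra $U$ generated by $b$ and $\ee^c$, and the fact that the Miura generators $W_j$ enter the decompositions of $L$, $U_i$ and $G^-$ undisturbed, so that cross terms die either on the top space of $\halflatticemod{-1}{\lambda}$ or by an induction on $j$. The organisation differs in two genuine ways. First, you start from a vector of globally minimal $L_0$-eigenvalue in $N$ (so that all positive modes annihilate it at once) and run an induction on the index $i$ of the generator $W_i$, whereas the paper starts from an arbitrary weight vector of $N$, pushes it into $M\otimes(\halflatticemod{-1}{\lambda})_{\text{top}}$ as a pure tensor $u_p\otimes v_{\text{top}}$, and then runs a descent recursion $w_{p+1}=(U_i)_m w_p$ that strictly lowers conformal weight until the only-top-submodules hypothesis on $M$ terminates it; both schemes rely on the conformal grading being bounded below, so neither is more economical on hypotheses. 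Second, and more substantively, for the top generator $W_{n+1}$ the paper sidesteps the vertex operator $\ee^{-c}$ entirely by introducing the auxiliary field $U_{n+1}=(-1)^nG^-_{(-1)}G^+$, whose $\Pi$-content lies in the Heisenberg subalgebra generated by $c$ and $d$, so the $i=n+1$ step is formally identical to the others; you instead work with $G^-$ directly, split $(G^-)_rv=0$ into $t_0$-homogeneous components, and invoke the injectivity of $\ee^{-c}_0$ on $(\halflatticemod{-1}{\lambda})_{\text{top}}$ (which the paper records in \cref{sec:halflat}) to preserve linear independence of the $\Pi$-factors. Your route costs one extra observation but avoids having to introduce and decompose $U_{n+1}$; the paper's route keeps every step uniform. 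Both are complete arguments, modulo the standard bookkeeping (shared with the paper) of choosing the tensor decomposition $v=\sum_\alpha m_\alpha\otimes s_\alpha$ with bihomogeneous, suitably independent factors.
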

\begin{proof}
The proof used here follows the same approach used for $n=2$ in Theorem 5.9 of \cite{AdaRea20}. Assume that $N$ is a nonzero $\uSubkk$-submodule of $\tpmod{M}{-1,\lambda}$ and let $w \in N$ be a weight vector. As $\halflatticemod{-1}{\lambda}$ has only top-submodules as a $U$-module, $w$ can be sent to a nonzero element of $M \otimes {\halflatticemod{-1}{\lambda}}_\text{top}$ under the action of modes from $\wun_R \otimes U \subset \uSubkk$. The result is therefore an element $w_0 = u_0 \otimes v_\text{top} \in N$ where $u_0 \in M$ and $v_\text{top} \in {\halflatticemod{-1}{\lambda}}_\text{top}$. 

If we can show that applying suitable modes from $\uSubkk$ to $w_0$ results in an element of $\tpmod{M}{-1,\lambda}_\text{top} = M_\text{top} \otimes {\halflatticemod{-1}{\lambda}}_\text{top}$ then we are done. We will do this recursively by defining $w_1, \dots, w_k \in N$ with $w_p = u_p \otimes v_\text{top}$ such that the conformal weight strictly decreases at each step and $w_k \in \tpmod{M}{-1,\lambda}_\text{top}$ for some $k \in \ZZ_{\ge 0}$. In other words, we recursively move up the submodule until we reach the top space and do so in such a way that $v_\text{top}$ is untouched at each step. 

To define this recursion, suppose $w_p = u_p \otimes v_\text{top} \in N$. If $(W_j)_m u_p = 0$ for all $j$ and $m>0$, then $u_p$ generates a highest-weight submodule of $M$. Since $M$ only has top-submodules, it follows that $u_p \in M_\text{top}$ and therefore that $w_p \in \tpmod{M}{-1,\lambda}_\text{top}$. Otherwise, let $i \in \{2,...,n+1\}$ be the unique integer for which
\begin{equation}
    (W_{j})_\ell u_p = 0 \quad \text{for all }\ell \in \ZZ_{>0} \text{ and } j<i,
\end{equation} 
and there exists $m' \in \ZZ_{>0}$ such that $(W_i)_{m'} u_p \neq 0$. Amongst $m'$ satisfying this condition, let $m$ be maximal for definiteness. Define $w_{p+1} = (U_i)_m w_p$. It is clear that $w_{p+1} \in N$ and has conformal weight strictly smaller than $w_p$. Moreover,
\begin{equation}
\begin{aligned}
    w_{p+1}  &= \left((-1)^{i+1} W_i(z)  + \sum_{j=0}^{i-1} W_j(z) \otimes \picorr{i}{j} (z)   \right)_m  u_p \otimes v_\text{top} \\
    &= (-1)^{i+1}(W_i)_m u_p \otimes v_\text{top} + \sum_{j=0}^{i-1} \sum_{r=0}^\infty (W_j)_{m+r} u_p \otimes \picorr{i}{j}_{-r} v_\text{top}\\
    &= (-1)^{i+1}(W_i)_m u_p \otimes v_\text{top} \neq 0.
\end{aligned}
\end{equation}
As the conformal weight of $u_p$ decreases at each iteration and $M$ is positive-energy, applying this procedure sufficiently many times will yield a nonzero $u_k \in M_\text{top}$. That is, $w_k \in  \tpmod{M}{-1,\lambda}_\text{top} \cap N$ and therefore $\tpmod{M}{-1,\lambda}$ has only top-submodules.
\end{proof}

\begin{theorem} \label{thm:topgenerated}
If $M$ is a top-generated weight $\uRegkk$-module, then $\tpmod{M}{-1,\lambda}$ is a top-generated weight $\uSubkk$-module for all $\lambda \in \CC$.
\end{theorem}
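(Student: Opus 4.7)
Let $N$ denote the $\uSubkk$-submodule of $\tpmod{M}{-1,\lambda}$ generated by $\tpmod{M}{-1,\lambda}_\text{top} = M_\text{top} \otimes \halflatticemod{-1}{\lambda}_\text{top}$; the goal is to show $N = \tpmod{M}{-1,\lambda}$. The first step is to note that $\halflatticemod{-1}{\lambda}$ is almost-irreducible (and so top-generated) as a $U$-module, and that $\wun_R \otimes U$ embeds into $\uSubkk$ via $J = b$ and $G^+ = \ee^c$; acting on $M_\text{top} \otimes \halflatticemod{-1}{\lambda}_\text{top}$ with modes from this subalgebra therefore produces all of $M_\text{top} \otimes \halflatticemod{-1}{\lambda} \subset N$.

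For the remainder, I would induct on the total conformal weight $h = h_m + h_v$, with base case the minimal weight space (which is already the top). For a weight vector $m \otimes v$ of weight $h > h_\text{min}$, two cases arise. If $v \notin \halflatticemod{-1}{\lambda}_\text{top}$, then $U$-top-generation of $\halflatticemod{-1}{\lambda}$ lets us write $v$ as a linear combination of weight-raising $b$- and $\ee^c$-mode applications to top vectors of $\halflatticemod{-1}{\lambda}$; peeling off these modes using $\wun_R \otimes U \subset \uSubkk$ reduces to strictly smaller weight, which lies in $N$ by the outer inductive hypothesis. If instead $v \in \halflatticemod{-1}{\lambda}_\text{top}$ but $m \notin M_\text{top}$, top-generation of $M$ over $\uRegkk$ (strongly generated by $W_2,\ldots,W_{n+1}$) lets us write $m = (W_i)_{-k} m_1$ for some $k > 0$ and $i \in \{2, \ldots, n+1\}$, with $h_{m_1} < h_m$, so $m_1 \otimes v \in N$ by the inductive hypothesis. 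We then extract $(-1)^{i+1} m \otimes v$ from $m_1 \otimes v$ by acting with a suitable $\uSubkk$-mode whose decomposition (via Proposition \ref{prop:stronggensubreg} and Theorem \ref{thm:regcontent}) has $(-1)^{i+1}(W_i)_{-k}$ as leading $\uRegkk$-factor: namely $L_{-k} = T_{-k} + t_{-k}$ for $i = 2$; the mode $(U_i)_{-k}$ for $3 \leq i \leq n$; and $(U_{n+1})_{-k}$ when $i = n+1$, where $U_{n+1} = (-1)^n G^-_{(-1)} G^+ \in \uSubkk$ admits the same kind of expansion as the $U_i$.

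The main obstacle is to verify that each mixed term produced this way also lies in $N$. Each such term takes the form $(W_j)_p m_1 \otimes (\picorr{i}{j})_{-k-p} v$ with $j < i$. Those in which $(\picorr{i}{j})_{-k-p} v \notin \halflatticemod{-1}{\lambda}_\text{top}$ reduce, at the same total weight, to $M$-factors paired with top $\Pi$-vectors of strictly smaller $\Pi$-weight by re-applying the $U$-top-generation argument of the first case, after which the outer induction applies. Those in which $(W_j)_p m_1$ has strictly smaller $M$-weight than $h_m$ are handled directly by the outer weight induction. The genuinely delicate contributions are the terms $(W_j)_{-k} m_1 \otimes (\picorr{i}{j})_0 v$ with $j < i$ and $(\picorr{i}{j})_0 v \in \halflatticemod{-1}{\lambda}_\text{top}$, which have the same weight split as the leading term; I would close these off through a nested reverse induction on the maximal Miura index appearing in a PBW expression for $m$, paired with an auxiliary induction on the number of $W_i$-modes at this maximal level, exploiting that $j < i$. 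The base case $i = 2$ of the inner induction is manageable because $W_0 = -\wun_R$ and $W_1 = 0$ annihilate all but one mixed contribution of the $L$-decomposition, namely $m_1 \otimes t_{-k} v$, which is itself handled by the first case since $t_{-k} v \notin \halflatticemod{-1}{\lambda}_\text{top}$.
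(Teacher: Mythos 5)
Your proposal is correct in essence and follows the same strategy as the paper's proof: reduce everything to the top space of $\halflatticemod{-1}{\lambda}$ using its almost-irreducibility over $U$, then build up the $M$-component one negative mode at a time via the triangular decompositions of $L$, $U_i$ and $U_{n+1}=(-1)^nG^-_{(-1)}G^+$, with the mixed terms $(W_j)_{p}m_1\otimes(\picorr{i}{j})_{q}v$ disposed of by $U$-top-generation when the $\Pi$-factor leaves the top space and by the lower-index cases $j<i$ otherwise. The one place you diverge is the bookkeeping for the delicate terms $(W_j)_{-k}m_1\otimes(\picorr{i}{j})_{0}v$: an induction on the ``maximal Miura index appearing in a PBW expression for $m$'' (plus a count of modes at that level) is both unnecessary and fragile, since PBW expressions are not unique and reordering modes via the $W_i(z)W_j(w)$ operator product expansions can \emph{raise} the maximal index (for $n\ge 3$ the singular part of $W_3(z)W_3(w)$ generically contains $W_4$). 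The paper avoids this entirely by fixing the statement ``if $u\otimes v_{\text{top}}\in N$ for \emph{all} top vectors $v_{\text{top}}$, then $(W_i)_{-k}u\otimes v_{\text{top}}\in N$ for all $v_{\text{top}}$'' and inducting on $i$ alone with $u$ held fixed and arbitrary; the delicate terms $(W_j)_{-k}u\otimes(\picorr{i}{j})_{0}v_{\text{top}}$ then fall under the $j<i$ case for the \emph{same} $u$, so no structural data about $u$ enters. (A minor slip: your intermediate case of mixed terms with strictly smaller $M$-weight but top $\Pi$-factor is vacuous, since the mode constraint forces a drop in $M$-weight to be compensated by a rise in $\Pi$-weight, pushing the $\Pi$-factor out of the top space.) With the inner induction reorganised in the paper's way, your argument is theirs.
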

\begin{proof}
The proof used here follows the same approach used for $n=2$ in Theorem 5.10 of \cite{AdaRea20}. Let $N$ be the submodule of $\tpmod{M}{-1,\lambda}$ generated by the top space $\tpmod{M}{-1,\lambda}_\text{top} = M_\text{top} \otimes {\halflatticemod{-1}{\lambda}}_\text{top}$. We begin by showing that $M \otimes {\halflatticemod{-1}{\lambda}}_\text{top} \subset N$. Assuming this, it then follows that $\tpmod{M}{-1,\lambda} \subset N$. This is because ${\halflatticemod{-1}{\lambda}}$ being top generated as a $U$-module means that any $u \otimes v \in \tpmod{M}{-1,\lambda}$ can be written as a collection of modes from $\wun_R \otimes U \subset \uSubkk$ acting on $u \otimes v_\text{top}$ for some $v_\text{top} \in {\halflatticemod{-1}{\lambda}}_\text{top}$. 

With this in mind, let $u \otimes v_\text{top} \in M \otimes {\halflatticemod{-1}{\lambda}}_\text{top}$. If $u \in M_\text{top}$, then it is clear that $u \otimes v_\text{top} \in N$. Otherwise, as $M$ is top-generated, $u$ is obtained from some $u_\text{top} \in M_\text{top}$ by the application of modes of the fields $W_2(z), \dots, W_{n+1}(z)$. 

It therefore suffices to show that if $u \otimes v_\text{top} \in N$ for all $v_\text{top} \in {\halflatticemod{-1}{\lambda}}_\text{top}$, then so is $(W_i)_{-m} u \otimes v_\text{top}$ for all $i = 2, \dots, n+1$ and $m>0$. Starting with $i=2$, 
\begin{equation}
    (U_2)_{-m} \left(u \otimes v_\text{top}\right) =-(W_2)_{-m} u \otimes v_\text{top} -  u \otimes \picorr{2}{0}_{-m} v_\text{top} \in N.
\end{equation}
As $\halflatticemod{-1}{\lambda}$ is top-generated as a $U$-module, $\picorr{2}{0}_{-m} v_\text{top}$ can be obtained from some $v'_\text{top}$ by the action of modes from $U$. So the right most term above can be written as modes from $\wun_R \otimes U \subset \uSubkk$ acting on $u \otimes v'_\text{top} \in N$ and is therefore also in $N$. Hence $(W_2)_{-m} u \otimes v_\text{top} \in N$ for all $m>0$. For $i=3$, 
\begin{equation}
\begin{aligned}
    (U_3)_{-m} \left(u \otimes v_\text{top}\right) &= (W_3 + W_2 \otimes \picorr{3}{2} - \picorr{3}{0})_{-m}     u \otimes v_\text{top}\\
    &= (W_3)_{-m} u \otimes v_\text{top}  + \sum_{r=0}^{\infty}(W_2)_{-m+r} u \otimes \picorr{3}{2}_{-r} v_\text{top}  - u \otimes \picorr{3}{0}_{-m} v_\text{top}    \in N.
\end{aligned}
\end{equation}
An identical argument as in the $i=2$ case shows that $u \otimes \picorr{3}{0}_{-m} v_\text{top}    \in N$. That $(W_2)_{-m+r} u \otimes \picorr{3}{2}_{-r} v_\text{top} \in N$ follows from $\halflatticemod{-1}{\lambda}$ being top-generated, also using a similar argument to the $i=2$ case. Hence $(W_3)_{-m} u \otimes v_\text{top} \in N$ for all $m>0$. 

That $(W_i)_{-m} u \otimes v_\text{top} \in N$ can be established in the same way as in the $i=3$ case, where we use the expansions \eqref{eq:regcontentcasimirs} to reduce to the $j<i$ cases and use that $\halflatticemod{-1}{\lambda}$ is top-generated. Hence we conclude that $M \otimes {\halflatticemod{-1}{\lambda}}_\text{top} \subset N$ as required.
\end{proof}

\begin{corollary} \label{cor:almostirredimplies}
If $M$ is an almost-irreducible weight $\uRegkk$-module then $\tpmod{M}{-1,\lambda}$ is an almost-irreducible weight $\uSubkk$-module for all $\lambda \in \CC$.
\end{corollary}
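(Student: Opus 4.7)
The statement is an immediate consequence of the two preceding theorems, so my plan is to simply combine them, recording the minor bookkeeping needed to see that both hypotheses transfer.

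By assumption $M$ is almost-irreducible, which by definition means that $M$ is both top-generated and has only top-submodules. The plan is to apply Theorem \ref{thm:topgenerated} to conclude that $\tpmod{M}{-1,\lambda} = M \otimes \halflatticemod{-1}{\lambda}$ is top-generated as a weight $\uSubkk$-module for every $\lambda \in \CC$, and independently to apply the preceding theorem (that only-top-submodules is inherited) to conclude that $\tpmod{M}{-1,\lambda}$ has only top-submodules. Taken together, these two facts are precisely the definition of $\tpmod{M}{-1,\lambda}$ being almost-irreducible, so no further argument is required.

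The only thing worth double-checking is that the two theorems apply simultaneously to the same object: the hypothesis in each is that $M$ is a weight $\uRegkk$-module with the respective structural property, and the conclusion in each is formulated for $\tpmod{M}{-1,\lambda}$ with $r=-1$ fixed so that $\halflatticemod{-1}{\lambda}$ is positive-energy with respect to $t(z)$, ensuring the resulting $\uSubkk$-module is $\ZZ_{\ge 0}$-graded and has a well-defined top space $M_{\text{top}}\otimes \halflatticemod{-1}{\lambda}_{\text{top}}$. This is exactly the setting in which ``almost-irreducible'' makes sense, so there is no compatibility issue. Since both input hypotheses are supplied by the single assumption that $M$ is almost-irreducible, and both output conclusions refer to the identical module $\tpmod{M}{-1,\lambda}$, the corollary follows at once. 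There is no serious obstacle here; the corollary is a formal conjunction of the two theorems it follows.
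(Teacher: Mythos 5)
Your proposal is correct and is exactly the intended argument: the paper states this as a corollary with no written proof precisely because it is the formal conjunction of the two preceding theorems, and your bookkeeping about the definition of almost-irreducibility and the $r=-1$ positive-energy setting is accurate.
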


\noindent We conclude this section with the case of $M$ being an irreducible $\uRegkk$-module. This is particularly important when we eventually want to discuss $\sRegkk$ and its modules: $\sRegkk$ is irreducible as a $\uRegkk$-module and, for nondegenerate admissible $\kk$, $\sRegkk$-modules are all direct sums of irreducible $\uRegkk$-modules. 

\begin{proposition} \label{prop:relaxedmodfromhw}
Let $M$ be an irreducible $\uRegkk$-module. Then 
\begin{itemize}
\item $\tpmod{M}{-1,\lambda}$ is an indecomposable relaxed highest-weight $\uSubkk$-module for all $\lambda \in \CC$.
\item $\tpmod{M}{-1,\lambda}$ is irreducible for almost all $\lambda \in \CC$.
\end{itemize}
\end{proposition}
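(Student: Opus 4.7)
My plan is to combine \cref{cor:almostirredimplies} with a direct computation of how the zero modes $(G^{\pm})_0$ act on the top space of $\tpmod{M}{-1,\lambda}$. Since every irreducible module is in particular almost-irreducible, the corollary gives immediately that $\tpmod{M}{-1,\lambda}$ is almost-irreducible. Its top space decomposes as $M_{\textup{top}} \otimes \halflatticemod{-1}{\lambda}_{\textup{top}}$; taking $M$ to be highest-weight (so $M_{\textup{top}}$ is one-dimensional, spanned by some $u$), I will work with the vectors $v_m := u \otimes \ee^{-b+(\lambda+m)c}$ for $m \in \ZZ$. Each $v_m$ lies at the minimal $L_0$-eigenvalue, so it is automatically annihilated by $\mathsf{U}_>$ and every $v_m$ is a relaxed highest-weight vector.

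Next I would compute how $(G^{\pm})_0$ moves the $v_m$ around the top space. Since $G^+ \mapsto \ee^c$, the mode $(G^+)_0 = \ee^c_0$ is the standard lattice shift, so $(G^+)_0 v_m$ is a nonzero scalar multiple of $v_{m+1}$; in particular $(G^+)_0$ is invertible on the top space. Using the explicit decomposition of $G^-$ in \cref{thm:regcontent}, the only contributions surviving when one projects $(G^{-})_0 v_m$ back onto the top space come from the scalar actions $(W_j)_0 u = \gamma_j u$ paired with the zero mode of $\picorr{-}{j}_{(-1)}\ee^{-c}$; a short calculation yields $(G^{-})_0 v_m = P_m(\lambda)\,v_{m-1}$ for a polynomial $P_m(\lambda)$ whose leading term in $\lambda + m$ comes from $\picorr{-}{0}$ acting via the $b_0$-eigenvalue of $v_m$, guaranteeing that $P_m$ is not identically zero. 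Consequently, for any fixed $\lambda$ only finitely many $m \in \ZZ$ satisfy $P_m(\lambda) = 0$. Choosing $M$ strictly below all such ``gaps,'' iterated application of $(G^{\pm})_0$ recovers every $v_m$ from $v_M$, and by \cref{thm:topgenerated} the single relaxed highest-weight vector $v_M$ generates all of $\tpmod{M}{-1,\lambda}$, proving the first bullet. Indecomposability is then immediate: any direct-sum decomposition of $\tpmod{M}{-1,\lambda}$ would restrict to a nontrivial direct-sum decomposition of the top space into $(G^+)_0$-invariant subspaces, which an invertible shift on a $\ZZ$-indexed basis does not admit (its only invariant subspaces are the upward-closed spans $\vspn\{v_m : m \ge N\}$, and no two nonzero such spans sum directly to the whole top space).

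For the second bullet, the set of $\lambda$ for which some $P_m(\lambda)$ vanishes is a countable discrete subset of $\CC$; for every $\lambda$ outside it, both $(G^{+})_0$ and $(G^{-})_0$ act invertibly on the top space, making the top space an irreducible $\zhu{\uSubkk}$-module, and the ``almost-irreducible + irreducible top space implies irreducible'' criterion recalled from \cite{AdaRea20} in \cref{subsec:relaxedmod} then delivers irreducibility of $\tpmod{M}{-1,\lambda}$. The main technical obstacle is the explicit computation of $(G^{-})_0 v_m$: one must carefully track each summand in the decomposition of $G^-$ from \cref{thm:regcontent} through a tensor-product zero-mode computation and verify that the leading $\lambda$-dependent contributions do not cancel, so that $P_m(\lambda)$ is genuinely a nonzero polynomial in $\lambda + m$.
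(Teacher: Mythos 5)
Your proposal is correct and follows essentially the same route as the paper's proof: one-dimensionality of $M_{\textup{top}}$ (which the paper derives from the commutativity of $\zhu{\uRegkk}$ rather than by assuming $M$ is highest-weight), the shift action of $G^+_0$ together with the polynomial coefficient $p(\gamma,\lambda+m)$ of $G^-_0$ read off from \cref{thm:regcontent}, a relaxed highest-weight generator chosen below all roots of that polynomial, and irreducibility for all $\lambda$ avoiding the finitely many classes in $\CC/\ZZ$ that contain a root. The one step to tighten is your justification of indecomposability: an injective shift on a $\ZZ$-indexed basis does admit invariant subspaces other than the upward-closed spans (e.g.\ $\vspn\set{v_m - v_{m+1} \st m \in \ZZ}$), so you should first use the $J_0$-eigenspace decomposition of the top space to see that the top space of any direct summand is spanned by a subset of the $v_m$, after which upward-closedness under $G^+_0$ rules out a nontrivial direct sum; the paper instead appeals to uniseriality as in the $n=2$ case of \cite{AdaRea20}.
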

\begin{proof}
As $M$ is irreducible, $M_\text{top}$ is a module for $\zhu{\uRegkk}$ which is abelian \cite{AraRat15}. Therefore, $M_\text{top}$ is one-dimensional and spanned by a vector $v_\gamma$ with $\gamma = (\gamma_2, \dots, \gamma_{n+1})$ defined by
\begin{equation}
(W_i)_0 \ v_\gamma = \gamma_i v_\gamma.
\end{equation}
The top space of $\tpmod{M}{-1,\lambda}$ is spanned by the set $\{ v_\gamma \otimes \ee^{-b + (\lambda +m)c} \vert m \in \ZZ \}$. To show that $\tpmod{M}{-1,\lambda}$ is a relaxed highest-weight $\uSubkk$-module, we need to find a relaxed highest-weight vector that generates $\tpmod{M}{-1,\lambda}$. As $\tpmod{M}{-1,\lambda}$ is top-generated by Theorem \ref{thm:topgenerated}, it suffices to find an $m$ for which $v_\gamma \otimes \ee^{-b + (\lambda +m)c}$ generates $\tpmod{M}{-1,\lambda}_\text{top}$. The modes $G^-_0$ and $G^+_0$ act on $v_\gamma \otimes \ee^{-b + (\lambda +m)c}$ according to
\begin{equation} \label{eq:zhugplusgminus}
    G^+_0\left(v_\gamma \otimes \ee^{-b + (\lambda +m)c} \right) 
    =v_\gamma \otimes \ee^{-b + (\lambda +m+1)c}, \quad G^-_0\left(v_\gamma \otimes \ee^{-b + (\lambda +m)c} \right) = p(\gamma, \lambda+m) v_\gamma \otimes \ee^{-b + (\lambda +m-1)c}
\end{equation}
where $p(\gamma,x)$ is polynomial  in $x$ of order at most $n+1$ by Theorem \ref{thm:regcontent}. Choose $m' \in \ZZ$ such that  $\lambda+m'$ is strictly less than the real parts of all roots of $p(\gamma,x)$. As $G^+_0$ and $G^-_0$ act injectively on $v_\gamma \otimes \ee^{-b + (\lambda +m')c}$, $v_\gamma \otimes \ee^{-b + (\lambda +m)c}$ is a relaxed highest-weight vector of $\tpmod{M}{-1,\lambda}$ that generates $\tpmod{M}{-1,\lambda}_\text{top}$ and therefore $\tpmod{M}{-1,\lambda}$.

That $\tpmod{M}{-1,\lambda}$ is indecomposable follows from $\tpmod{M}{-1,\lambda}$ being uniserial as in the $n=2$ case described in the proof of Theorem 5.12 of \cite{AdaRea20}. 

Finally, recall that $\tpmod{M}{-1,\lambda}$ has only top-submodules so any submodule of $N \subset \tpmod{M}{-1,\lambda}$ must contain an element of the form $\ee^{-b + (\lambda +m)c}$. As $G^+_0$ acts injectively on $\tpmod{M}{-1,\lambda}_\text{top}$, there exists $m' \le m$ such $G_0^- \ee^{-b + (\lambda +m')c} = 0$. That is, $\lambda+m'$ is a root of $p(\gamma,x)$. As  $p(\gamma,x)$ is polynomial in $x$ for fixed $\gamma$, there are finitely many $[\lambda] \in \CC / \ZZ$ such that $[\lambda]$ contains a root of $p(\gamma,x)$. 
\end{proof}

\begin{corollary}
Let $M$ be an irreducible $\uRegkk$-module. Conjugate highest-weight vectors in $\tpmod{M}{-1,\lambda}$ are of the form $v_\gamma \otimes \ee^{-b + (\lambda +m)c}$ with $ p(\gamma, \lambda+m) = 0$. If conjugate highest-weight vectors are present (i.e. when $\tpmod{M}{-1,\lambda}$ is reducible), let $m' \in \ZZ$ be the maximal $m$ satisfying $ p(\gamma, \lambda+m) = 0$. Then the submodule of $\tpmod{M}{-1,\lambda}$ generated by $v_\gamma \otimes \ee^{-b + (\lambda +m')c}$ is an irreducible conjugate highest-weight $\uSubkk$-module.
\end{corollary}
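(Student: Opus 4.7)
The plan is to first characterise the conjugate highest-weight vectors of $\tpmod{M}{-1,\lambda}$ and then to verify irreducibility of the submodule $N$ generated by $w_0 = v_\gamma \otimes \ee^{-b+(\lambda+m')c}$, leveraging the structural results from Proposition \ref{prop:relaxedmodfromhw}.

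For the first part, one observes that a conjugate highest-weight vector is a weight vector annihilated by $\mathsf{U}_>$ and $G^-_0$. Any vector annihilated by $\mathsf{U}_>$ sits in the top space $M_\text{top} \otimes {\halflatticemod{-1}{\lambda}}_\text{top}$, which by irreducibility of $M$ is spanned by $\{v_\gamma \otimes \ee^{-b+(\lambda+m)c} : m \in \ZZ\}$. The $J_0$-eigenvalues on these basis vectors are $\lambda + m - \ell_n(\kk)$, pairwise distinct as $m$ ranges over $\ZZ$, so every weight vector in the top must be proportional to a single basis element. Equation \eqref{eq:zhugplusgminus} then turns the $G^-_0$-annihilation condition into $p(\gamma, \lambda+m) = 0$, as claimed.

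For the second part, the generator $w_0$ is itself conjugate highest-weight by the first assertion, so $N$ is a conjugate highest-weight module. I would next identify the top space of $N$: since $\mathsf{U}_> w_0 = 0$, one has $N = \mathsf{U}_< \mathsf{U}_0 w_0$ and therefore $N_\text{top} = \mathsf{U}_0 w_0$. The zero modes $J_0$, $L_0$ and $(U_i)_0$, all of which have $J$-charge zero, act diagonally on each one-dimensional $J_0$-weight space of ${\tpmod{M}{-1,\lambda}}_\text{top}$; combined with $G^-_0 w_0 = 0$ and the injectivity of $G^+_0$, iteration yields $N_\text{top} = \vspn\{v_\gamma \otimes \ee^{-b+(\lambda+m)c} : m \ge m'\}$.

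Irreducibility then follows quickly. Let $N' \subseteq N$ be any nonzero submodule. Since $\tpmod{M}{-1,\lambda}$ has only top-submodules (by the theorem proved above), so does $N$, and hence $N'$ contains some $v_\gamma \otimes \ee^{-b+(\lambda+m)c}$ with $m \ge m'$. The maximality of $m'$ implies $p(\gamma, \lambda+m'') \neq 0$ for every integer $m'' > m'$, so applying $G^-_0$ exactly $m - m'$ times yields a nonzero scalar multiple of $w_0$. Hence $w_0 \in N'$, forcing $N' = N$. The only delicate point in the whole argument is the maximality step used in this final iteration; everything else is an immediate consequence of Proposition \ref{prop:relaxedmodfromhw} and the only-top-submodules property.
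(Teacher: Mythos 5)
Your argument is correct and is essentially the proof the paper intends: the corollary is stated there without proof, as an immediate consequence of Proposition \ref{prop:relaxedmodfromhw}, the formula \eqref{eq:zhugplusgminus}, and the only-top-submodules property, which is exactly what you assemble. The one step worth making explicit is why a weight vector annihilated by $\mathsf{U}_>$ must lie in the top space at all: such a vector generates a submodule supported in conformal weights at least its own, so the only-top-submodules property of $\tpmod{M}{-1,\lambda}$ forces its conformal weight to be minimal; with that inserted, the rest of your characterisation and the descent-by-$G^-_0$ irreducibility argument go through as written.
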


\noindent Given $\gamma \in \CC^{n}$, it is not immediately clear what the roots of the polynomial $p(\gamma,x)$ are. For $n=1$ and $2$, the roots of $p(\gamma,x)$ can be described using data from quantum hamiltonian reductions of certain highest-weight $\saffvoa{\kk}{\sltwo}$- and $\saffvoa{\kk}{\slthree}$-modules respectively. The $n=2$ case is essentially the second point in Theorem 4.20 from \cite{Feh21}.

It is expected that such a description holds for general $n$ but knowledge about the subregular quantum hamiltonian reduction functor is not currently sufficient to address this problem.

\section{Simple quotients} \label{sec:simple}

A natural question to ask is when the embedding $\uSubkk \hookrightarrow \uRegkk \otimes \Pi$ descends to an embedding of simple quotients. As we will see, this is almost-always true and depends on the level $\kk$. The restrictions for $\kk$ are known for $n=1$ \cite{AdaRea17} and $n=2$ \cite{AdaRea20}:
\begin{equation} \label{eq:simpleembn=12}
\begin{gathered}
    \saffvoa{\sltwo}{\kk}  \cong \sSub{\kk}{2} \hookrightarrow \Wsymb_{2,\kk} \otimes \Pi \cong \VOA{Vir}_\kk \otimes \Pi \Leftrightarrow \kk + 1 \notin \ZZ_{\ge 1} \\
    \VOA{BP}_\kk \cong \sSub{\kk}{3} \hookrightarrow \Wsymb_{3,\kk} \otimes \Pi \cong \VOA{Z}_\kk \otimes \Pi \Leftrightarrow \kk + 2, \ 2\kk+4  \notin \ZZ_{\ge 1}
\end{gathered}
\end{equation}
A nice feature of \eqref{eq:simpleembn=12} is that when $\kk$ is admissible, these conditions exclude precisely the degenerate admissible levels. Then, results similar to Proposition \ref{prop:relaxedmodfromhw} allow for the construction of continuous families of almost-always simple relaxed $\sSubkk$-modules for $n=1$ and $2$. 

In the $n=1$ and $2$ cases, the modules constructed in this manner are referred to as \emph{standard modules} and play a fundamental role in the determination of modular transformations and Grothendieck fusion rules for the simple subregular W-algebra at nondegenerate admissible levels. This is because the corresponding information for the simple regular W-algebra at these levels (also known as \emph{$\VOA{W}_{n+1}$ minimal models}) is known and the standard modules allows us to `lift' this information using the relaxed modules defined by inverse quantum hamiltonian reduction. 

Generalising this story to the $n>2$ case fully is out of the scope of this paper but is expected to follow the same lines. 

\subsection{Embedding for simple quotients}

Explicit formulae for singular vectors in $\uSubkk$ are only known for particular pairs of $n$ and $\kk$. When $n=1$, the Malikov--Feigin--Fuchs formula for singular vectors in Verma modules for $\affine{\sltwo}$ \cite{MFF} can be used to describe singular vectors in admissible-level $\saffvoa{\kk}{\sltwo}$ \cite{AdaVer95}. Singular vectors for $\uSub{\kk}{3}$ are known for $\kk=-\frac{5}{3}$ and $\kk=-\frac{9}{4}$ \cite{AdaCla19} in addition to admissible levels of the form $\kk = -3+\frac{\uu}{2}$ where $\uu>2$ is odd \cite{AraRat16}. 

Little is known about singular vectors and the corresponding $\uSubkk$-submodules of $\uSubkk$ for general $n$ and $\kk$. Fortunately, determining when embeddings of simple quotients exist only requires the knowledge we have about relaxed $\uSubkk$-modules from \cref{subsec:relaxedmod} and an understanding of singular vectors of a particular form:

\begin{proposition} \label{prop:singvect}
The vector $(G^+_{-1})^m \wun$, $m>0$, is singular in $\uSubkk$ if and only if $i(\kk+n) = m$ for some $i \in \{1, \dots, n\}$.
\end{proposition}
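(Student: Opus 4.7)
Under the embedding of Theorem~\ref{thm:invemb}, the vector $(G^+_{-1})^m \wun$ corresponds to a nonzero scalar multiple of $\wun_R \otimes \ee^{mc} \in \uRegkk \otimes \Pi$ and so has $J_0$-charge $m$ and $L_0$-weight $m$. It is singular in $\uSubkk$ if and only if every positive (weight-grading) mode of each strong generator in $\set{J,L,G^+,U_3,\dots,U_n,G^-}$ annihilates it. The plan is to dispose of all but one of these checks by a weight-and-charge argument and then to reduce the proposition to the vanishing of a single explicit scalar.

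Since $J \mapsto b$, $L \mapsto T+t$ and $G^+ \mapsto \ee^c$, and $\ee^{mc}$ is annihilated by every positive mode of $b$, $t$ and $\ee^c$ while $T_k \wun_R = 0$ for all $k \ge 0$, the modes $J_k$, $L_k$ and $G^+_k$ with $k \ge 1$ automatically kill $\wun_R \otimes \ee^{mc}$. For the remaining strong generators, note that only $G^\pm$ carry nonzero $J_0$-charge ($\pm 1$), with minimal conformal weights $1$ and $n$ respectively, while the charge-zero generators $J$, $L$, $U_i$ only raise weight. Elementary counting of PBW-type monomials then shows that any state in the charge-$c$ weight-$w$ subspace of $\uSubkk$ satisfies $w \ge c$ when $c > 0$. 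Since $(U_i)_k(G^+_{-1})^m\wun$ lies in the charge-$m$, weight-$(m-k)$ subspace and $G^-_k(G^+_{-1})^m\wun$ in the charge-$(m-1)$, weight-$(m-k)$ subspace, this immediately forces $(U_i)_k(G^+_{-1})^m\wun = 0$ for all $k \ge 1$ and $G^-_k(G^+_{-1})^m\wun = 0$ for all $k \ge 2$, independently of $\kk$ and $m$.

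It remains to analyse $G^-_1(G^+_{-1})^m\wun$, which lies in the one-dimensional charge-$(m-1)$ weight-$(m-1)$ subspace spanned by $(G^+_{-1})^{m-1}\wun$; write $G^-_1(G^+_{-1})^m \wun = c_m(\kk)\,(G^+_{-1})^{m-1}\wun$. Using the decomposition $G^- = W_{n+1}\otimes \ee^{-c} + \sum_{j=0}^{n} W_j \otimes \picorr{-}{j}_{(-1)}\ee^{-c}$ from Theorem~\ref{thm:regcontent}, the fact that $Y(W_j, z)\wun_R$ contains only non-negative powers of $z$, and the fact that $\ee^{-c}(z)\ee^{mc}(w)$ has no singular part (because $\inner{-c}{mc}=0$), the computation collapses to the $j=0$ summand, making $c_m(\kk)$ proportional to the coefficient of $(z-w)^{-(n+1)}$ in the OPE of $\picorr{-}{0}_{(-1)}\ee^{-c}$ with $\ee^{mc}$. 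Since $\picorr{-}{0} = \bigl((\kk+n) L^\Pi_{-1} + a_{-1}\bigr)^{n+1}\wun_\Pi$ is a polynomial in $a$ and its derivatives, and each contraction $\partial^r a(z)\ee^{mc}(w)$ yields a pole of order $r+1$ with residue $(-1)^r r!\,m\,\ee^{mc}(w)$, Wick's theorem expresses $c_m(\kk)$ as a polynomial in $m$ and $\kk$. The main obstacle is verifying the factorisation $c_m(\kk) = (-1)^{n-1}\,m\prod_{i=1}^{n}\bigl(i(\kk+n)-m\bigr)$; direct evaluation reproduces $m(\kk+1-m)$ for $n=1$ and $-m(\kk+2-m)(2\kk+4-m)$ for $n=2$, matching the known singular vectors of $\uaffvoa{\kk}{\sltwo}$ and $\ubpvoak$. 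For general $n$ I expect this identity to follow either from a combinatorial expansion of $\picorr{-}{0}$ (tracking all Wick pairings against $\ee^{mc}$) or from an inductive recursion via the Borcherds commutator $[G^-_{(n)}, G^+_{(-1)}] = \sum_{k=0}^{n}\binom{n}{k}(D_k)_0$ with $D_k = G^-_{(k)}G^+$. Once established, vanishing of $c_m(\kk)$ for $m > 0$ is exactly the condition $m = i(\kk+n)$ for some $i \in \set{1,\dots,n}$, completing the proof.
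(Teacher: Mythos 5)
Your reduction is sound and follows the same overall strategy as the paper: the charge/weight counting correctly disposes of every positive mode except $G^-_1$, the charge-$(m-1)$, weight-$(m-1)$ subspace of $\uSubkk$ is indeed one-dimensional, and isolating the $j=0$ summand of the decomposition of $G^-$ (equivalently, projecting onto $\CC\wun_R\otimes\Pi$, since every other summand carries a $\uRegkk$-factor of positive conformal weight) is a legitimate way to set up the scalar $c_m(\kk)$. The proof is nevertheless incomplete: for $n>2$ the entire content of the proposition is the factorisation $c_m(\kk)=\pm\, m\prod_{i=1}^{n}\bigl(i(\kk+n)-m\bigr)$, and you verify this only for $n=1,2$ before writing that you ``expect'' it to follow from a combinatorial expansion or a recursion. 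That expectation is exactly where the work lies. The paper proves it by an inductive lemma: writing $G^-=-\rho_{n+1}\cdots\rho_1\ee^{-c}$ with $\rho_i=(\kk+n)\partial+b_{-1}+\tfrac{\kk+n+1}{n+1}c_{-1}-(\varepsilon_i)_{-1}$, one shows by induction on $j$ that
\begin{equation*}
	(\ee^{mc})_{(j)}\bigl(-\rho_{j+1}\cdots\rho_1\ee^{-c}\bigr)=m\prod_{i=1}^{j}\bigl(i(\kk+n)-m\bigr)\ee^{(m-1)c},
\end{equation*}
the inductive step expanding the outermost $\rho$, commuting the exponential mode past the Heisenberg modes (which produces the $-m$), and handling the $(\kk+n)\partial$ term by differentiating the OPE supplied by the inductive hypothesis (which produces the factor $(j+1)(\kk+n)$). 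Your second suggested route, a recursion via Borcherds commutators, is essentially this argument, but it must actually be carried out; the first route, a direct Wick expansion of $\bigl((\kk+n)t_{-1}+a_{-1}\bigr)^{n+1}\wun_\Pi$ against $\ee^{mc}$, is harder than it looks because $t_{-1}$ does not commute with $a_{-1}$ and the resulting pairings do not visibly organise themselves into the product $\prod_i\bigl(i(\kk+n)-m\bigr)$ --- taming that combinatorics is precisely what the induction is for. Until one of these is executed, the ``if and only if'' is established only for $n\le 2$. (A minor point: your overall sign $(-1)^{n-1}$ disagrees with the paper's $(-1)^{n}$; this is harmless for the vanishing condition but worth reconciling with the sign $W_0=-\wun_R$ in the $j=0$ term.)
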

\begin{proof}
By charge and conformal dimension considerations (that is, that the eigenvalues of $L_0$ and $L_0-J_0$ are non-negative), it is clear that $J_s$, $L_s$, $(U_i)_s$, $G^+_{s-1}$ and $G^-_{s+1}$ annihilate $(G^+_{-1})^m \wun$ for all $s>0$. All that remains to check is when $G^-_1 (G^+_{-1})^m \wun = G^-_{(n)} (G^+)^m = 0$. By the embedding, we can identify $(G^+)^m$ with $\ee^{m c}$ and $G^-$ with $-\rho_{n+1}\dots\rho_1 \ee^{-c}$. Again using charge and conformal dimension considerations, $\left((G^+)^m\right)_{(n')}G^- = 0$  for all $n'>n$ and therefore
\begin{equation}
    G^-_{(n)} (G^+_{-1})^m = (-1)^{n+1} (\ee^{m c})_{(n)} \left(-\rho_{n+1}\dots\rho_1 \ee^{-c} \right).
\end{equation}
While this appears to have superficially made things more complicated, the above form allows us to explicitly compute the $n$-th product by performing the computation in $\uRegkk \otimes \Pi$. The following technical lemma is the most tedious part of this proof but the result gives the desired conditions on $\kk$ automatically. The idea is to not compute the $n$'th product directly but to sneak up on it by gradually inserting more $\rho_j$ operators while simultaneously raising the mode index on $\ee^{m c}$.

\begin{lemma}
Let $j \in \{0,1, \dots, n \}$. Then
\begin{equation} \label{eq:ithproduct}
    (\ee^{m c})_{(j)} \left(-\rho_{j+1}\dots\rho_1 \ee^{-c} \right) = m \prod_{i=1}^j \left( i(\kk+n)-m \right)\ee^{(m-1) c} ,
\end{equation}
where we take the product over $i$ to be equal to $1$ when $j=0$.
\end{lemma}
\begin{proof}
Proceeding by induction, the $j=0$ case can be checked directly.
\begin{equation}
\begin{aligned}
    (\ee^{m c})_{(0)} \left(-\rho_1 \ee^{-c} \right) &= -(\ee^{m c})_{(0)} \left( (\kk+n) (T_{-1}+t_{-1}) + b_{-1} + \frac{\kk+n+1}{n+1} c_{-1} - (\varepsilon_1)_{-1}\right) \ee^{-c} \\
    & = -(\ee^{m c})_{(0)} \left( a_{-1}  \ee^{-c} \right) \\
    & = \left[a_{-1},(\ee^{m c})_{(0)} \right]  \ee^{-c} \\
    & = m \ee^{(m-1) c}.
\end{aligned}
\end{equation}
For the inductive step, suppose that \eqref{eq:ithproduct} holds for some $j$. By the same charge and conformal dimension considerations used earlier, $(\ee^{m c})_{(j')} \left(-\rho_{j+1}\dots\rho_1 \ee^{-c} \right) = 0$ for all $j' > j$. To see that \eqref{eq:ithproduct} being true for $j$ implies that \eqref{eq:ithproduct} is true for $j+1$, we simply need to expand $\rho_{j+2}$ and reduce back to the $j$ case. That is,
\begin{equation} \label{eq:partialj+1prod}
\begin{aligned}
    (\ee^{m c})_{(j+1)} \left(-\rho_{j+2}\rho_{j+1}\dots\rho_1 \ee^{-c} \right) &= (\ee^{m c})_{(j+1)} \left(-\left((\kk+n)\partial + b_{-1}  + \frac{\kk+n+1}{n+1}c_{-1} - (\varepsilon_{j+2})_{-1} \right)\rho_{j+1}\dots\rho_1 \ee^{-c} \right) \\
    &= (\ee^{m c})_{(j+1)} \left(-\left((\kk+n)t_{-1} + b_{-1}  + \frac{\kk+n+1}{n+1}c_{-1} \right)\rho_{j+1}\dots\rho_1 \ee^{-c} \right) \\
    &= -(\kk+n)(\ee^{m c})_{(j+1)} \left(t_{-1}\rho_{j+1}\dots\rho_1 \ee^{-c} \right) - m (\ee^{m c})_{(j)} \left(-\rho_{j+1}\dots\rho_1 \ee^{-c} \right)
\end{aligned}
\end{equation}
using the commutation relation, for $p, q \in \ZZ$,
\begin{equation}
    \left[ A c_{(p)} + B d_{(p)}, \ee^{m c}_{(q)} \right] = 2 B m \ \ee^{m c}_{(p+q)}.
\end{equation}
The term involving $t_{-1}$ can be simplified using standard identities involving derivatives and $i$'th products described in, for example, \cite{KacVer94}. Here we attack this head-on using the operator product expansions of $\ee^{mc}(z)$ and  $\left(-\rho_{j+1}\dots\rho_1 \ee^{-c} \right)(z)$. This is of course equivalent to working with $i$'th products. By the inductive hypothesis and the observation earlier that the $j'$'th product vanishes for $j' > j$,
\begin{equation}
    \ee^{mc}(z)\left(-\rho_{j+1}\dots\rho_1 \ee^{-c} \right)(w) \sim \frac{m \prod_{i=1}^j \left( i(\kk+n)-m \right)\ee^{(m-1) c}(w)}{(z-w)^{j+1}} + \dots .
\end{equation}
Applying $\partial_w$ to both sides and extracting the coefficient field of the order $j+2$ pole finally gives 
\begin{equation}
    -(\kk+n)(\ee^{m c})_{(j+1)} \left(t_{-1}\rho_{j+1}\dots\rho_1 \ee^{-c} \right) = (j+1)(\kk+n) m \prod_{i=1}^j \left( i(\kk+n)-m \right)\ee^{(m-1) c} .
\end{equation}
Combining this with \eqref{eq:partialj+1prod} shows that \eqref{eq:ithproduct} being true for $j$ implies that \eqref{eq:ithproduct} is also true for $j+1$ and therefore, by induction, we have our desired result.
\end{proof}

\noindent Substituting $j=n$ into \eqref{eq:ithproduct} gives
\begin{equation}
    G^-_{(n)} (G^+_{-1})^m = (-1)^{n+1} m \prod_{i=1}^n \left( i(\kk+n)-m \right)(G^+_{-1})^{m-1},
\end{equation}
and from here it is clear that the right-hand-side vanishes exactly when $i(\kk+n) = m$ for some $i \in \{1 ,\dots, n \}$ as required.
\end{proof}

\noindent Substituting $n=1$ and $2$ reproduces the conditions for $\saffvoa{\kk}{\sltwo}$ and $\VOA{BP}_\kk$ respectively. With this result in hand, we are in a position to answer the question of when our embedding $\uSubkk \hookrightarrow \uRegkk \otimes \Pi$ descends to an embedding of simple quotients. Let $\psi_\kk$ denote the composition of the embedding $\uSubkk \hookrightarrow \uRegkk \otimes \Pi$ with the projection from $\uRegkk$ to its simple quotient $\sRegkk$. That is, 
\begin{equation}
    \psi_\kk: \uSubkk \hookrightarrow \uRegkk \otimes \Pi \twoheadrightarrow \sRegkk \otimes \Pi.
\end{equation}
This is clearly non-zero as image of the vacuum of $\uSubkk$ is the vacuum of $\sRegkk \otimes \Pi$. In terms of $\psi_\kk$, the question at hand then becomes: for which $\kk$ is $\psi_\kk(\uSubkk)$ simple, i.e. $\psi_\kk(\uSubkk) \cong \sSubkk$?  

\begin{theorem} \label{thm:descent}
$\sSubkk$ embeds into $\sRegkk \otimes \Pi$ if and only if $i(\kk+n) \notin \ZZ_{\ge 1}$ for all $i \in \{1, \dots, n\}$ .
\end{theorem}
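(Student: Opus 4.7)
The plan reduces the theorem to a statement about the maximal ideal. Since $\sSubkk$ is the unique simple quotient of $\uSubkk$ and $\psi_\kk$ is nonzero (it maps the vacuum to the vacuum), its kernel is a proper ideal and therefore contained in the unique maximal ideal $I \subset \uSubkk$. Consequently, $\psi_\kk$ descends to an embedding $\sSubkk \hookrightarrow \sRegkk \otimes \Pi$ precisely when the opposite inclusion $I \subseteq \ker\psi_\kk$ also holds. The task is therefore to determine when the maximal ideal of $\uSubkk$ is killed by $\psi_\kk$.

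The forward direction is a direct contrapositive. Assume $i(\kk+n) = m$ for some $i \in \set{1,\dots,n}$ and $m \in \ZZ_{\ge 1}$. By Proposition~\ref{prop:singvect}, the vector $(G^+_{-1})^m \wun$ is a nonzero singular vector of $\uSubkk$, hence lies in $I$. Under the identification $\psi_\kk(G^+) = \ee^{c}$ from Proposition~\ref{prop:stronggensubreg}, and using the regular OPE $\ee^c(z)\ee^c(w) \sim 0$, iterating the $(-1)$-product yields $\psi_\kk((G^+_{-1})^m \wun) = \ee^{mc}$, a nonzero element of the $\Pi$-factor of $\sRegkk \otimes \Pi$. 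So a singular vector in $I$ escapes $\ker\psi_\kk$, forcing $I \nsubseteq \ker\psi_\kk$ and ruling out the embedding of simple quotients.

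For the reverse direction, assume $i(\kk+n) \notin \ZZ_{\ge 1}$ for all $i \in \set{1,\dots,n}$. I would show that $\psi_\kk(\uSubkk)$ is itself simple, which then forces $\psi_\kk(\uSubkk) \cong \sSubkk$ since $\sSubkk$ is the unique simple quotient. The plan is to use the relaxed modules $\tpmod{\sRegkk}{-1,\lambda}$ from Proposition~\ref{prop:relaxedmodfromhw}, built on the irreducible $\uRegkk$-module $\sRegkk$: for generic $\lambda \in \CC$, these are irreducible $\uSubkk$-modules on which the $\uSubkk$-action manifestly factors through $\sRegkk \otimes \Pi$, and hence through $\psi_\kk(\uSubkk)$. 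A hypothetical nontrivial ideal of $\psi_\kk(\uSubkk)$ would pull back to an ideal of $\uSubkk$ strictly larger than $\ker\psi_\kk$, producing a singular vector $v \in I$ with $\psi_\kk(v) \neq 0$. Using the explicit decomposition of Theorem~\ref{thm:regcontent}—in which $G^+$ is the unique strong generator of $\uSubkk$ whose image lies entirely in the $\Pi$-factor, while every other generator carries nontrivial $\uRegkk$-content—together with $J_0$-charge and conformal dimension bookkeeping, one argues that any such $v$ must reduce to a scalar multiple of $(G^+_{-1})^m \wun$ for some $m \ge 1$, contradicting Proposition~\ref{prop:singvect} under our hypothesis.

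The main obstacle is precisely this last reduction step. Since no classification of singular vectors of $\uSubkk$ is available for general $n$ and noncritical $\kk$, one cannot enumerate candidates and check them one by one. Instead the argument must leverage the structural fact, embodied by Theorem~\ref{thm:regcontent}, that the $\uRegkk$-parts of all strong generators other than $G^+$ involve the Miura basis fields $W_j$, whose images are killed by the projection $\uRegkk \twoheadrightarrow \sRegkk$ whenever they enter the maximal ideal of $\uRegkk$; consequently any obstructing singular vector whose image survives in $\sRegkk \otimes \Pi$ must be built purely from iterated products of $\ee^c$. Making this heuristic rigorous for arbitrary noncritical $\kk$—not just at admissible levels where the maximal ideal structure of $\uRegkk$ is well-understood—is the delicate core of the argument.
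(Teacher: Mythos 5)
Your reduction to the maximal ideal and your forward direction are sound: when $i(\kk+n)=m\in\ZZ_{\ge 1}$, the non-vacuum singular vector $(G^+_{-1})^m\wun$ lies in the maximal ideal while its image $\wun_R\otimes\ee^{mc}$ under $\psi_\kk$ is visibly nonzero, so $\psi_\kk$ cannot factor through the simple quotient. This is exactly the paper's argument for that implication.

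The gap is in the reverse direction, and it sits precisely where you flag it. You assert that any singular vector $v$ in the maximal ideal with $\psi_\kk(v)\neq 0$ must reduce to a scalar multiple of $(G^+_{-1})^m\wun$ by charge and conformal-dimension bookkeeping together with Theorem \ref{thm:regcontent}, but that bookkeeping does not close: a priori a nonzero proper ideal of $\psi_\kk(\uSubkk)\subset\sRegkk\otimes\Pi$ could be generated by a vector of charge zero and positive conformal weight built from Heisenberg and $W_i$ modes, and nothing in the charge/weight data alone forces it onto the line $\CC\,\wun_R\otimes\ee^{mc}$. The missing idea --- the actual engine of the paper's proof --- is to apply the spectral flow $\sfsymb^{-1}$ so that a hypothetical ideal $I$ satisfies $\sfmod{-1}{I}\subset \sRegkk\otimes\halflatticemod{-1}{0}=\tpmod{\sRegkk}{-1,0}$, and then to invoke Corollary \ref{cor:almostirredimplies}: since $\sRegkk$ is an irreducible weight $\uRegkk$-module, $\tpmod{\sRegkk}{-1,0}$ has only top-submodules, so $\sfmod{-1}{I}$ must meet the explicitly known top space $\mathrm{span}\set{\wun_R\otimes\ee^{-b+mc}}$. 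Undoing the spectral flow places some $\wun_R\otimes\ee^{mc}$ with $m>0$ in $I$; taking $m$ minimal and applying $G^-_{(n)}$ (computed in the proof of Proposition \ref{prop:singvect}) yields $(-1)^n m\prod_{i=1}^n\left(i(\kk+n)-m\right)\wun_R\otimes\ee^{(m-1)c}\in I$, which by minimality of $m$ forces $i(\kk+n)=m$ for some $i$ --- the desired contradiction. Your instinct to use the relaxed modules of Proposition \ref{prop:relaxedmodfromhw} points at the right tool, but they enter through this only-top-submodules property of $\tpmod{\sRegkk}{-1,0}$ after a spectral-flow twist, not through genericity of $\lambda$ or any classification of singular vectors.
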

\begin{proof}
The proof of this statement is very similar to the $n=2$ case presented in Theorem 6.2 of \cite{AdaRea20}. Indeed the only modification in our present context is the slightly different definition of the relevant $\Pi$-modules and having to care about the action of modes from a few more fields. 

Following the proof provided therein, suppose $\psi_\kk(\uSubkk)$ is not simple as a $\uSubkk$-module and therefore has a non-zero proper ideal $I$. As $\uSubkk$-modules, $I \subset \sRegkk \otimes \Pi = \sRegkk \otimes \halflatticemod{0}{0}$. Applying the spectral flow $\uSubkk$-automorphism $\sfsymb^{-1}$ to both sides and observing that spectral flow can be realised using $\uRegkk$- and $\Pi$-automorphisms according to $\sfsymb^{-1} = \text{id}_r \otimes \rho^{-1}$,
\begin{equation}
    \sfmod{-1}{I} \subset \sRegkk \otimes \halflatticemod{-1}{0} = \tpmod{\sRegkk}{-1,0}.
\end{equation}
As $\sRegkk$ is an irreducible weight $\uRegkk$-module, $\tpmod{\sRegkk}{-1,0}$ is an almost-irreducible $\uSubkk$-module by Corollary \ref{cor:almostirredimplies}. The top space of $\tpmod{\sRegkk}{-1,0}$ is spanned by the vectors $\wun_R \otimes \ee^{-b+ m c}$. Hence there exists $m \in \ZZ$ such that $\wun_R \otimes \ee^{-b+ m c} \in \sfmod{-1}{I}$ and therefore after applying spectral flow again $\wun_R \otimes \ee^{ m c} \in I$. 

As $\uSubkk$ contains only fields of nonnegative conformal dimension, it must be that $m \ge 0$. Even better, if $m=0$ then $I$ contains the vacuum of $\uRegkk$ and therefore $I = \psi_\kk(\uSubkk)$ which is a contradiction. Take $m>0$ to be minimal satisfying $\wun_R \otimes \ee^{m c} \in I$. In particular, $\wun_R \otimes \ee^{(m-1) c} \notin I$. 

As $\psi_\kk((G^+)^m) = \wun_R \otimes \ee^{m c}$ is annihilated by all $J_s$, $L_s$, $(U_i)_s$, $G^+_{s-1}$ and $G^-_{s}$ with $s>0$, it is a singular vector in $I$. As the embedding of Theorem \ref{thm:invemb} sends $(G^+)^{\ell}$ to $\wun_R \otimes \ee^{\ell c}$ for all $\ell \ge 0$ and composing with the projection map $\uRegkk \twoheadrightarrow \sRegkk$ leaves $\Pi$ untouched, $\psi_\kk((G^+)^{m-1}) = \wun_R \otimes \ee^{(m-1) c}$ is non-zero. Hence $(G^+)^{m}$ is singular in $\uSubkk$. By Proposition \ref{prop:singvect}, if $i(\kk+n) \neq m$ for some $i \in \{ 1, \dots, n\}$ then this cannot occur and therefore $\psi_\kk(\uSubkk)$ is simple. 

For the converse, if $i(\kk+n) = m$ for some $i \in \{ 1, \dots, n\}$ then $(G^+)^{m}$ is singular in $\uSubkk$. Then $\psi_\kk((G^+)^{m}) \neq 0$ is singular in $\psi_\kk(\uSubkk)$ and therefore $\psi_\kk(\uSubkk)$ is not simple. 
\end{proof}

\subsection{Admissible levels and standard modules} \label{sec:standard}

\noindent Anticipating that admissible levels are both interesting and important from a logarithmic conformal field theory point of view, it is useful to know when admissible-level $\sSubkk$ is related to admissible-level $\sRegkk$ using the inverse quantum hamiltonian reduction of Theorem \ref{thm:descent}. Recall that an admissible level $\kk$ for $\slnpone$ is one that satisfies 
\begin{equation}
    \kk+n+1 = \frac{\uu}{\vv}, \text{ where } \uu \in \ZZ_{\ge n+1}, \vv \in \ZZ_{\ge 1} \text{ and } \text{gcd}\{\uu,\vv \} = 1.
\end{equation}
\begin{corollary} \label{cor:minmodemb}
Let $\kk = -n-1 + \fracuv$ be admissible. $\sSubkk$ embeds into $\sRegkk \otimes \Pi$ if and only if $\vv > n$. That is, 
\begin{equation}
    \Subminmod{\uu}{\vv} \hookrightarrow \Regminmod{\uu}{\vv} \otimes \Pi.
\end{equation}
\end{corollary}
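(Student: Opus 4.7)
The plan is to deduce this corollary directly from \cref{thm:descent} by translating the condition $i(\kk+n)\notin\ZZ_{\ge 1}$ into a divisibility statement about $\uu$ and $\vv$.

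First I would compute $\kk+n$ explicitly: since $\kk=-n-1+\uu/\vv$, we have $\kk+n=(\uu-\vv)/\vv$, so
\begin{equation*}
    i(\kk+n) = \frac{i(\uu-\vv)}{\vv}.
\end{equation*}
The coprimality $\gcd\{\uu,\vv\}=1$ forces $\gcd\{\uu-\vv,\vv\}=1$, hence $i(\kk+n)\in\ZZ$ if and only if $\vv\mid i$.

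Next I would check the two directions. If $\vv>n$, then no $i\in\{1,\dots,n\}$ is divisible by $\vv$, so $i(\kk+n)\notin\ZZ$ and in particular $i(\kk+n)\notin\ZZ_{\ge 1}$; \cref{thm:descent} then yields the embedding. Conversely, if $\vv\le n$, then $i=\vv$ lies in $\{1,\dots,n\}$ and $\vv(\kk+n)=\uu-\vv$. Admissibility gives $\uu\ge n+1>\vv$, so $\uu-\vv\in\ZZ_{\ge 1}$, violating the hypothesis of \cref{thm:descent} and preventing the embedding.

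Finally I would justify the notational passage $\sRegkk=\Regminmod{\uu}{\vv}$: this identification requires $\kk$ to be nondegenerate admissible for $\slnpone$, that is $\uu,\vv\ge n+1$. The condition $\vv>n$ is equivalent to $\vv\ge n+1$ since $\vv$ is a positive integer, so in the ``if'' direction we are automatically in the nondegenerate admissible regime and the stated embedding $\Subminmod{\uu}{\vv}\hookrightarrow\Regminmod{\uu}{\vv}\otimes\Pi$ is meaningful. There is no genuine obstacle here; the proof is essentially a one-line arithmetic consequence of \cref{thm:descent}, and the only point requiring any care is to match the admissibility hypothesis $\uu\ge n+1$ with the strict inequality $\vv>n$ in order to produce the integer $\uu-\vv\ge 1$ in the ``only if'' direction.
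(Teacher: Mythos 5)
Your proof is correct and follows essentially the same route as the paper: both directions reduce to the arithmetic of $i(\kk+n)=i(\uu-\vv)/\vv$, using $i=\vv$ with $\uu-\vv\ge 1$ when $\vv\le n$ and coprimality when $\vv>n$. The only difference is that you make the coprimality step and the nondegeneracy of the level explicit, which the paper leaves implicit.
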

\begin{proof}
Suppose that $\kk = -n-1 + \fracuv$ with $\vv \le n$. Then $v(\kk+n) = \uu - \vv \in \ZZ_{\ge 1}$ as $\uu \in \ZZ_{\ge n+1}$. So for such $\kk$, $\sSubkk$  does not embed into $\sRegkk \otimes \Pi$. If $\vv > n$ (i.e. $\kk$ is nondegenerate admissible), $i(\kk+n) = i \fracuv - i$ is not an integer for all  $i \in \set{1, \dots, n}$ so $\Subminmod{\uu}{\vv} \hookrightarrow \Regminmod{\uu}{\vv} \otimes \Pi$ by Theorem \ref{thm:descent}.
\end{proof}

\noindent Theorem \ref{thm:descent} also shows there exists an embedding $\sSubkk \hookrightarrow \sRegkk \otimes \Pi$ for certain non-admissible levels. This includes fractional levels of the form $\kk = -n-1+\frac{n}{n+1}$, at which $\uRegkk$ is reducible \cite{Ara07}.

Mirroring the construction of relaxed modules for the universal subregular W-algebra $\uSubkk$ using the inverse reduction embedding, relaxed modules for the simple subregular W-algebra $\sSubkk$ can be constructed when the embedding $\sSubkk \hookrightarrow \sRegkk \otimes \Pi$ exists.

At non-admissible level, the representation theory of the affine vertex operator algebras and W-algebras is largely mysterious. Despite this, Proposition \ref{prop:relaxedmodfromhw} shows that $\sSubkk$ admits infinitely many irreducible modules of the form $\tpmod{\sRegkk}{-1,\lambda}$ for $\kk = -n-1+\frac{n}{n+1}$, and likely for many more non-admissible levels. 

Returning to a nondegenerate admissible level $\kk = -n-1+\fracuv$, $\sRegkk = \Regminmod{\uu}{\vv}$ is rational and hence has finitely many irreducible modules denoted by $\mathcal{W}_\gamma$ for $\gamma \in \text{Pr}^k_\mathcal{W}$ as described in Section \ref{sec:reg}. Therefore by Corollary \ref{cor:minmodemb} and Proposition \ref{prop:relaxedmodfromhw}, the $\Subminmod{\uu}{\vv}$-module $\tpmod{\mathcal{W}_\gamma}{-1,\lambda}$ is an indecomposable relaxed highest-weight module that is almost-always irreducible. This shows that $\Subminmod{\uu}{\vv}$ is nonrational in the category of weight modules.

The relaxed modules $\tpmod{\mathcal{W}_\gamma}{-1,\lambda}$ play the role of `standard modules' in the computation of modular transformations and fusion rules of $\Subminmoduv$ when $n=1$ \cite{CreMod13} and $n=2$ \cite{FehilyMod}. Much of the representation-theoretic structure present in those cases is present for all $n$. It is therefore expected that the relaxed $\Subminmoduv$-modules $\tpmod{\mathcal{W}_\gamma}{-1,\lambda}$ will be essential in computing logarithmic conformal field theoretic data when $n>2$ as well. 

The simple subregular W-algebra $\Subminmod{n+1}{n+2}$ is isomorphic to the logarithmic $\mathcal{B}_{n+1}$-algebra \cite{CreCos13, Aug20, AdaRPVP21}. Tensor categories related to $\mathcal{B}_{n+1}$ that are  braided, rigid and non-semisimple have been constructed using a conjectural relationship between $\mathcal{B}_{n+1}$ and the unrolled restricted quantum groups of $\sltwo$ \cite{Aug20}. It would be interesting to see if our representation-theoretic results for $\Subminmod{n+1}{n+2}$ are able to reproduce this categorical data.

Something lacking in the general $\Subminmod{\uu}{\vv}$ case is an understanding of highest-weight (and conjugate highest-weight) $\Subminmoduv$-modules for $\vv > n$. Much of this is due to the complexity of the operator product expansion between $G^+(z)$ and $G^-(z)$, and the related problem of finding roots of the polynomial $p(\gamma,x)$ from \eqref{eq:zhugplusgminus}.

More detailed knowledge about how the subregular quantum hamiltonian reduction functor acts on highest-weight $\saffvoa{\kk}{\slnpone}$-modules will likely assist in this direction. The structure of highest-weight $\Subminmoduv$-modules (in particular the vacuum module) is also required to compute fusion rules of $\Subminmoduv$ using inverse quantum hamiltonian reduction as has been done for $n=1$ \cite{CreMod13} and $2$ \cite{FehilyMod}.

\newpage
\appendix
\section{Operator product expansions of subregular W-algebras} \label{app:opes}
Obtaining operator product expansions for $\uSubkk$ becomes exceedingly difficult as $n$ gets large. For fixed $n$, one can in principle use the construction of W-algebras from Kac and  Wakimoto \cite{KacQua04} to describe strong generators for $\uSubkk$ and their operator product expansions. 

In light of the results of Section \ref{subsec:explicit}, operator product expansions can equivalently be obtained from the free-field realisation in terms of fields in $\uRegkk$ and $\Pi$. The operator product expansions listed in this appendix are those satisfied by the strong generators in Section \ref{subsec:explicit} for $n=1$, $2$ and $3$.  

\subsection{Type \texorpdfstring{$A_1$}{A1}} \label{subsec:A1} 
$\uSub{\kk}{2}$ is the universal affine vertex algebra $\uaffvoa{\kk}{\sltwo}$. It is the vertex algebra with vacuum $\wun$ strongly and freely generated by fields $h(z) = 2 J(z)$, $e(z) = G^+(z)$ and $f(z) = G^-(z)$ satisfying the following \opes{}:
\begin{equation} \label{ope:sl2}
\begin{gathered}
    h(z)h(w) \sim  \frac{2\kk \wun}{(z-w)^2}, \qquad
    h(z)e(w) \sim  \frac{2e(w)}{(z-w)}, \qquad
    h(z)f(w) \sim  \frac{-2f(w)}{(z-w)}, \\
    e(z)e(w) \sim  f(z)f(w) \sim 0, \qquad
	e(z)f(w) \sim \frac{\kk \wun}{(z-w)^2} + \frac{h(w)}{z-w}.
\end{gathered}
\end{equation}
$\uSub{\kk}{2}$ can be given a conformal structure (and therefore becomes a vertex operator algebra) using the \emph{Sugawara construction} as long as $\kk \neq -2$. The energy-momentum field is given by
\begin{equation}
    L(z) = \frac{1}{2(\kk+2)}\left( \frac{1}{2}\no{h(z)h(z)} + \no{e(z)f(z)} + \no{f(z)e(z)}   \right).
\end{equation}
 With respect to $L(z)$, the conformal dimension of the generating fields $h(z)$, $e(z)$ and $f(z)$ is 1, whilst the central charge of $\uSub{\kk}{2}$ is
\begin{equation}
	\overline{\cc}^{2}_{\kk} = \frac{3\kk}{\kk+2}.
\end{equation}

\subsection{Type \texorpdfstring{$A_2$}{A2}} \label{subsec:A2} 

$\uSub{\kk}{3}$ is isomorphic to the Bershadsky--Polyakov algebra $\ubpvoak$. It is the vertex operator algebra with vacuum $\wun$ that is strongly and freely generated by fields $J(z)$, $G^+(z)$, $G^-(z)$ and $L(z)$ satisfying the following \opes{}:
\begin{equation} \label{ope:bp}
\begin{gathered}
    L(z)L(w) \sim -\frac{2(\kk+1)(2\kk+3) \wun}{ (\kk+3) (z-w)^4} + \frac{2L(w)}{(z-w)^2} + \frac{\partial L(w)}{(z-w)}, \qquad
    L(z)J(w) \sim -\frac{(2\kk+3) \wun}{3(z-w)^3} + \frac{J(w)}{(z-w)^2} + \frac{\partial J(w)}{(z-w)}, \\
    L(z)G^+(w) \sim \frac{G^+(w)}{(z-w)^2} + \frac{\partial G^+(w)}{(z-w)}, \qquad
    L(z)G^-(w) \sim \frac{2G^-(w)}{(z-w)^2} + \frac{\partial G^-(w)}{(z-w)},  \\
    J(z)J(w) \sim  \frac{(2\kk+3) \wun}{3(z-w)^2}, \qquad
    J(z)G^\pm(w) \sim  \frac{\pm G^\pm (w)}{(z-w)}, \qquad
    G^\pm(z)G^\pm(w) \sim 0, \\
	G^+(z)G^-(w) \sim \frac{(\kk+1)(2\kk+3) \wun}{(z-w)^3} + \frac{3(\kk+1) J(w)}{(z-w)^2} + \frac{3 \no{J(w)J(w)} + (2\kk+3) \partial J(w) - (\kk+3) L(w)}{z-w}.
\end{gathered}
\end{equation}
This family of \voas\ was first described in \cite{PolGau90,BerCon91} where it was constructed via a `new' type of \qhr\ from the universal affine \voas\ $\uslvoak$ associated to $\slthree$. From \eqref{ope:bp}, we see that the conformal dimensions of the generating fields $J(z)$, $G^+(z)$, $G^-(z)$ and $L(z)$ are $1$, $1$, $2$ and $2$ respectively, whilst the central charge of $\uSub{\kk}{3}$ is
\begin{equation}
	\overline{\cc}^{3}_{\kk} = -\frac{4(\kk+1)(2\kk+3)}{\kk+3}.
\end{equation}
\newpage
\subsection{Type \texorpdfstring{$A_3$}{A3}} \label{subsec:A3} 
$\uSub{\kk}{4}$ is isomorphic to the $n=4$ Feigin--Semikhatov algebra $\FS{4}$. It is the vertex operator algebra with vacuum $\wun$ that is strongly and freely generated by fields $J(z)$, $G^+(z)$, $G^-(z)$, $W(z)$ and $L(z)$ satisfying the following \opes{}:
{\small\begin{equation} \label{ope:FS}
\begin{gathered}
    L(z)L(w) \sim -\frac{(3\kk+8)(11\kk+29) \wun}{2 (\kk+4) (z-w)^4} + \frac{2L(w)}{(z-w)^2} + \frac{\partial L(w)}{(z-w)}, \qquad
    L(z)J(w) \sim -\frac{(3\kk+8) \wun}{2(z-w)^3} +\frac{J(w)}{(z-w)^2} + \frac{\partial J(w)}{(z-w)}, \\
	L(z)W(w) \sim \frac{3W(w)}{(z-w)^2} + \frac{\partial W(w)}{(z-w)}, \qquad	    
    L(z)G^+(w) \sim \frac{G^+(w)}{(z-w)^2} + \frac{\partial G^+(w)}{(z-w)}, \qquad
    L(z)G^-(w) \sim \frac{3G^-(w)}{(z-w)^2} + \frac{\partial G^-(w)}{(z-w)},  \\
    J(z)J(w) \sim  \frac{(3\kk+8) \wun}{4(z-w)^2}, \qquad
    J(z)G^\pm(w) \sim  \frac{\pm G^\pm (w)}{(z-w)}, \qquad
    J(z)W(w) \sim 0, \qquad
    G^\pm(z)G^\pm(w) \sim 0,\\
	G^+(z)G^-(w) \sim \frac{(\kk+2)(2\kk+5)(3\kk+8) \wun}{(z-w)^4} + \frac{4(\kk+2)(2\kk+5) J(w)}{(z-w)^3} -\frac{(\kk+2)\big((\kk+4)\tilde{L}(w) - 6 \no{J(w) J(w)} - 2(2\kk+5)\partial J(w)\big)}{(z-w)^2} \\
	\hspace{-50pt}+(\kk+2)\Bigg( W(w)+\frac{8(11\kk+32)}{3(3\kk+8)^2} \no{J(w)^3}  -\frac{4(\kk+4)}{3\kk+8}\no{\tilde{L}(w) J(w)} + 6\no{J(w)	\partial J(w)}\\
	\hspace{10pt}-\frac{1}{2}(\kk+4)\partial \tilde{L}(w) + \frac{4(3\kk^2+17\kk+26)}{3(3\kk+8)}\partial^2 J(w) \Bigg)(z-w)^{-1}, \\
	W(z)G^\pm(w) \sim \pm \frac{2(\kk+4)(3\kk+7)(5\kk+16)G^\pm(w)}{(3\kk+8)^2 (z-w)^3}+ \left(\pm \frac{3(\kk+4)(5\kk+16)}{2(3\kk+8)}\partial G^\pm (w) - \frac{6(\kk+4)(5\kk+16)}{(3\kk+8)^2}\no{J(w) G^\pm(w)}\right) (z-w)^{-2}\\
	\hspace{50pt}+\Bigg(-\frac{8(\kk+3)(\kk+4)}{(\kk+2)(3\kk+8)}\no{J(w) \partial G^\pm(w)}-\frac{4(\kk+4)(3\kk^2+15\kk+16)}{(\kk+2)(3\kk+8)^2} \no{\partial J(w)G^\pm(w)}  \pm \frac{(\kk+3)(\kk+4)}{\kk+2}\partial^2 G^\pm(w)\\
	\hspace{30pt}\mp \frac{2(\kk+4)^2}{(\kk+2)(3\kk+8)} \no{\tilde{L}(w) G^\pm(w)} \pm \frac{4(\kk+4)(5\kk+16)}{(\kk+2)(3\kk+8)^2}\no{J(w)^2 G^\pm(w)}\Bigg)(z-w)^{-1},\\
	W(z)W(w) \sim \frac{2(\kk+4)(2\kk+5)(3\kk+7)(5\kk+16) \wun}{(3\kk+8)(z-w)^6}-\frac{3(\kk+4)^2(5\kk+16) \tilde{L}_\perp(w)}{(3\kk+8)(z-w)^4}-				\frac{3(\kk+4)^2(5\kk+16)\partial \tilde{L}_\perp(w)}{2(3\kk+8)(z-w)^3}\\
	\hspace{-100pt}+ \Bigg( -\frac{3(\kk+4)^2(5\kk+16)(12\kk^2+59\kk+74)}{4(3\kk+8)(20\kk^2+93\kk+102)}\partial^2 \tilde{L}_\perp(w) \\
	\hspace{100pt}+\frac{8(\kk+4)^3(5\kk+16)}{(3\kk+8)(20\kk^2 + 93\kk+102)}\no{ \tilde{L}_\perp(w) \tilde{L}_\perp(w)} +4(\kk+4)\Lambda(w) \Bigg) (z-w)^{-2}\\
	\hspace{-70pt}+\Bigg(-\frac{(\kk+4)^2(5\kk+16)(12\kk^2+59\kk+74)}{6(3\kk+8)(20\kk^2+93\kk+102)}\partial^3 \tilde{L}_\perp(w) \\
	\hspace{100pt} +\frac{8(\kk+4)^3(5\kk+16)}{(3\kk+8)(20\kk^2+93\kk+102)}\no{\partial \tilde{L}_\perp(w)\tilde{L}_\perp(w)} + 2(\kk+4)\partial \Lambda(w) \Bigg)(z-w)^{-1},
\end{gathered}
\end{equation} }%
where $\tilde{L}(z) = L(z) - \partial J(z)$, $\tilde{L}_\perp(z) = \tilde{L}(z) - \frac{2}{3\kk+8}\no{J(z)J(z)}$ and
{\small\begin{equation}
\begin{aligned}
    (\kk+2)^2 \Lambda(z) =& \ \no{G^+(z) G^-(z)} - \frac{\kk+2}{2}\partial W(z) -\frac{4(\kk+2)}{3\kk+8}\no{W(z) J(z)}+\frac{3(\kk+2)^2(\kk+4)(6\kk^2+33\kk+46)}{2(3\kk+8)(20\kk^2+93\kk+102)}\partial^2 \tilde{L}_\perp(z) \\
	&\hspace{-35pt}-\frac{(\kk+2)(\kk+4)^2(11\kk+26)}{2(3\kk+8)(20\kk^2+93\kk+102)}\no{\tilde{L}_\perp(z) \tilde{L}_\perp(z)} + \frac{2(\kk+2)(\kk+4)}{3\kk+8}\partial \no{\tilde{L}_\perp(z) J(z)} +\frac{8(\kk+2)(\kk+4)}{(3\kk+8)^2} \no{\tilde{L}_\perp(Z) J(z) J(z)}\\
	&\hspace{-33pt}-\frac{(\kk+2)(2\kk+5)}{3\kk+8}\Bigg(\frac{8}{3}\no{\partial^2 J(z)J(z)} +2 \no{\partial J(z)\partial J(z)} +\frac{16}{3\kk+8}\no{\partial J(z)J(z) J(z)} + \frac{32}{3(3\kk+8)^2}\no{J(z)^4} + \frac{3\kk+8}{6}\partial^3 J(z) \Bigg).
\end{aligned}
\end{equation}}%
From \eqref{ope:FS}, we see that the conformal dimensions of the generating fields $J(z)$, $G^+(z)$, $W(z)$ $G^-(z)$ and $L(z)$ are $1$, $1$, $3$, $3$ and $2$ respectively, whilst the central charge of $\uSub{\kk}{4}$ is
\begin{equation}
	\overline{\cc}^{4}_{\kk} = -\frac{(3\kk+8)(11\kk+29) }{ \kk+4}.
\end{equation}

\newpage

\raggedright
%\bibliography{subreg}
%\bibliographystyle{unsrt}

\providecommand{\opp}[2]{\textsf{arXiv:\mbox{#2}/#1}}\providecommand{\pp}[2]{\textsf{arXiv:#1
  [\mbox{#2}]}}

\end{document}